\documentclass[12pt]{amsart}
\usepackage[utf8]{inputenc}
\usepackage{mathtools,hyperref,quiver}
\usepackage[maxnames=8]{biblatex}
\usepackage[capitalize]{cleveref}
\usepackage[capitalize]{cleveref}
\usepackage[margin=2cm]{geometry}

\usepackage{amsthm}
\usepackage[thmtools-compat]{keytheorems}

\newkeytheorem{thm}[parent=section]

\NewDocumentCommand{\DefThm}{m m m m}{
  \newkeytheorem{#2}[
    style={#1},
    sibling=thm,
    title={#3},
    refname={{#3},{#4}},
  ]
}

\NewDocumentCommand{\DefTheorem}{m m m}{\DefThm{plain}{#1}{#2}{#3}}
\NewDocumentCommand{\DefNonTheorem}{m m m}{\DefThm{definition}{#1}{#2}{#3}}

\DefTheorem{theorem}{Theorem}{Theorems}
\DefTheorem{lemma}{Lemma}{Lemmas}
\DefTheorem{corollary}{Corollary}{Corollaries}
\DefTheorem{proposition}{Proposition}{Propositions}
\DefTheorem{conjecture}{Conjecture}{Conjectures}

\DefNonTheorem{definition}{Definition}{Definitions}
\DefNonTheorem{construction}{Construction}{Constructions}
\DefNonTheorem{example}{Example}{Examples}
\DefNonTheorem{remark}{Remark}{Remarks}

\usepackage{enumitem}

\newlist{conditions}{enumerate}{1}
\setlist[conditions]{label=(\arabic*),itemsep=0ex}
\Crefname{conditionsi}{Condition}{Conditions}
\crefname{conditionsi}{condition}{conditions}

\newlist{parts}{enumerate}{1}
\setlist[parts]{label=(\arabic*),itemsep=0ex}
\Crefname{partsi}{Part}{Parts}
\crefname{partsi}{part}{parts}

\newcounter{nodemaker}
\def\twocellflexible#1#2#3{%
  \global\edef\mynodeone{twocell\arabic{nodemaker}}%
  \stepcounter{nodemaker}%
  \global\edef\mynodetwo{twocell\arabic{nodemaker}}%
  \stepcounter{nodemaker}%
  \ar[#1,phantom, start anchor=center, end anchor=center, shift left={#2}, "" {name=\mynodeone}, description]%
  \ar[#1,phantom, start anchor=center, end anchor=center, shift right={#2}, "" {name=\mynodetwo}, description]%
  \ar[Rightarrow,anchor=center,from=\mynodeone,to=\mynodetwo,#3]%
}

\NewDocumentCommand{\twocell}{m O{}}{\twocellflexible{#1}{4}{#2}}
\NewDocumentCommand{\twocellflip}{m O{}}{\twocellflexible{#1}{-4}{#2}}

\DeclarePairedDelimiter\bracks\lbrack\rbrack
\DeclarePairedDelimiter\verts\lvert\rvert

\DeclarePairedDelimiter\braces\lbrace\rbrace
\DeclarePairedDelimiterX\set[2]\lbrace\rbrace{#1 \mathrel{\delimsize\vert} #2}

\usepackage{xspace}

\makeatletter
\newcommand{\DeclareAbbrevation}[2]{\newcommand{#1}{\@ifnextchar{.}{#2}{#2.\@\xspace}}}
\makeatother

\DeclareAbbrevation{\ie}{i.e}
\DeclareAbbrevation{\eg}{e.g}
\DeclareAbbrevation{\cf}{cf}
\DeclareAbbrevation{\etc}{etc}
\DeclareAbbrevation{\resp}{resp}
\DeclareAbbrevation{\etal}{et al}
\DeclareAbbrevation{\ibid}{ibid}
\DeclareAbbrevation{\ca}{ca}

\DeclareMathOperator{\colim}{\mathrm{colim}}

\newcommand{\dom}{\mathsf{dom}}
\newcommand{\cod}{\mathsf{cod}}
\newcommand{\fst}{\mathsf{fst}}

\newcommand{\id}{\mathsf{id}}
\newcommand{\II}{\mathbb{I}}
\newcommand{\nat}{\mathbb{N}}
\newcommand{\cat}{\mathsf{Cat}}
\newcommand{\spc}{\mathcal{S}}

\newcommand{\arr}{\mathrm{Ar}}

\newcommand{\cocart}{\mathsf{cocart}}

\newcommand{\arrcocart}{\arr_\cocart}

\newcommand{\lfib}{\mathrm{LFib}}
\newcommand{\rfib}{\mathrm{RFib}}

\newcommand{\cocartfib}{\mathrm{CocartFib}}

\newcommand{\fun}{\mathrm{Fun}}

\newcommand{\funcocart}{\mathrm{Fun}_\cocart}

\newcommand{\gl}{\mathrm{Gl}}
\newcommand{\map}{\mathrm{Map}}
\newcommand{\fib}{\mathrm{fib}}
\newcommand{\ob}{\mathrm{Ob}}
\newcommand{\ev}{\mathsf{ev}}
\newcommand{\co}{\colon}

\newcommand{\cocomma}[2]{{#1} \uparrow {#2}}

\makeatletter
\def\slashedarrowfill@#1#2#3#4#5{%
  $\m@th\thickmuskip0mu\medmuskip\thickmuskip\thinmuskip\thickmuskip
  \relax#5#1\mkern-7mu%
  \cleaders\hbox{$#5\mkern-2mu#2\mkern-2mu$}\hfill
  \mathclap{#3}\mathclap{#2}%
  \cleaders\hbox{$#5\mkern-2mu#2\mkern-2mu$}\hfill
  \mkern-7mu#4$%
}
\def\rightslashedarrowfill@{%
  \slashedarrowfill@\relbar\relbar\mapstochar\rightarrow}
\newcommand\xslashedrightarrow[2][]{%
  \ext@arrow 0055{\rightslashedarrowfill@}{#1}{#2}}
\makeatother

\newcommand{\profto}{\xslashedrightarrow{}}

\DeclareFontFamily{U}{min}{}
\DeclareFontShape{U}{min}{m}{n}{<-> udmj30}{}
\newcommand\yo{\!\text{\usefont{U}{min}{m}{n}\symbol{'207}}\!}
\newcommand\yogl{\yo^{\gl}}

\title{A synthetic construction of universal cocartesian fibrations}
\author{Christian Sattler}
\author{David Wärn}

\begin{document}

\begin{abstract}
We give a model-independent construction of directed univalent cocartesian
fibrations of $(\infty,1)$-categories, and prove a straightening
equivalence against such fibrations.
The key step is showing that cocartesian fibrations descend along
localisations, which we accomplish by analysing mapping spaces of localisations.
Along the way we introduce a directed version of the join construction, giving a
sequential colimit description of the full image of any functor.
\end{abstract}

\maketitle

\section{Introduction}

In ordinary category theory, one often gets away with constructing categories and functors between them in an ad-hoc manner, by declaring what objects and morphisms are, and how functors act on them.
In higher category theory however, this would involve an infinite tower of coherences and quickly becomes infeasible. A more principled approach is needed.
In Lurie's foundational work, this approach makes use of set-based models of homotopical structures, particularly quasicategories.
In contrast, more recent work on the foundations and applications of higher category theory increasingly uses the resulting high-level language of $(\infty, 1)$-categories, which is model-independent and homotopy-invariant.

In this approach, which one might call \emph{formal} or \emph{synthetic}, ($\infty$-categorical) universal properties play a central role.
For example: in ordinary category theory, one can define the functor category $\fun(A, B)$ by declaring that its objects are functors and its morphisms natural transformations.
In a synthetic setting, we forget about the definition and instead rely on the universal property, which in this case says that functors $X \to \fun(A, B)$ correspond to functors $X \times A \to B$.

Of fundamental importance are the (large) $(\infty, 1)$-categories $\spc$ of $\infty$-groupoids and $\cat_\infty$ of $(\infty, 1)$-categories, and these too enjoy universal properties which go by the name of \emph{straightening--unstraightening}.
Namely, $\spc$ is the target of a \emph{left fibration} $p \co \spc_{\bullet} \to \spc$.
Every functor $F \co X \to \spc$ thus determines a left fibration over $X$, given by the pullback $F^* p \co X \times_\spc \spc_\bullet \to X$; this is the \emph{unstraightening of $F$}.
The universal property of $\spc$ says that \emph{every} left fibration over $X$ with small fibres arises in this way from a functor $F \co X \to \spc$ (its straightening) which is unique up to contractible choice.
Similarly, $\cat_\infty$ is the target of a \emph{cocartesian} fibration $q \co \cat_{\infty \bullet} \to \cat_\infty$, and its universal property expresses that functors $X \to \cat_\infty$ correspond to cocartesian fibrations over $X$ with small fibres, under the same operation of pulling back $q$ along a functor $F \co X \to \cat_\infty$.%
\footnote{%
In the setting of ordinary categories, (co)cartesian fibrations are known as \emph{Grothendieck (op)fibrations}, and the operation of unstraightening a functor $X \to \cat$ is known as the \emph{Grothendieck construction}.
}
For short, we may say that $\spc_\bullet \to \spc$ is the \emph{universal left fibration}, and $\cat_{\infty \bullet} \to \cat_\infty$ is the universal cocartesian fibration.

The existence of a universal cocartesian fibration is a foundational technical result of higher category theory.
Since Lurie's first proof~\cite{lurie09}, simpler treatments have been given~\cite{cisinski22,hebestreit25}, and the result has been generalised to categories internal to $\infty$-topoi~\cite{martini22} and proven in the framework of simplicial type theory~\cite{gratzer26}.
But the question remains whether a model-independent proof is possible.
On the surface this seems unlikely: how could one build the $(\infty, 1)$-category of $(\infty, 1)$-categories without knowing precisely what an $(\infty,1)$-category is?

In the present work, we present a construction of $\cat_\infty$ and prove its universal property using model-independent means, assuming little more than straightening--unstraightening for left fibrations.
The resulting argument is simultaneously simple and general: we expect it to apply also in the setting of categories internal to an $\infty$-topos and in constructive settings.
This can be viewed as a continuation of the program of synthetic homotopy theory; indeed we use ideas from homotopy type theory.

\subsection{Overview}

For a cocartesian fibration $p \co E \to B$, every morphism $f : B(x, y)$ in its base induces a \emph{transport} functor $f_! \co E(x) \to E(y)$ on fibres.
We say that $p$ is \emph{directed univalent} if the corresponding map $B(x, y) \to \map(E(x), E(y))$ is an equivalence of
spaces.
We say that a category%
\footnote{From now on we take \emph{category} to mean $(\infty,1)$-category.}
$F$ is \emph{classified} by $p$ if there is an equivalence $F \simeq E(x)$ for some $x : B$.

Our first main result is that there are \emph{enough} directed univalent cocartesian fibrations, in the following sense.

\getkeytheorem{enough}

This is a refinement of the naive idea that there is a `category of \emph{all} categories'.
It expresses that for any collection of categories which is small enough that it can be indexed by a category, there is a category of \emph{those} categories.

Our second main result is a straightening theorem against any directed univalent cocartesian fibration.

\getkeytheorem{straightening}

The second claim above implies that the straightening $f \co C \to B$ of $q$ is unique in an appropriate sense.
The map it refers to is defined in \cref{cocartesian-transport}.

The key ingredient for both theorems is a proof that cocartesian fibrations descend along cocommas, sequential colimits, and localisations (\cf \cref{descent-cocomma,descent-sequential,descent-localisation}).
Localisations are the hardest to deal with by far.
What we show is that if $i \co C \to D$ exhibits $D$ as the localisation of $C$ at some collection $W \subseteq \arr(C)$ of morphisms, then pulling back along $i$ induces a fully faithful functor
\[
\begin{tikzcd}
  \cocartfib(D)
  \ar[r, hook, "i^*"]
&
  \cocartfib(C)
\end{tikzcd}
\]
whose image consists of those cocartesian fibrations $p \co E \to C$ with the property that for any morphism $f \in W$, the transport functor $f_!$ is invertible.
Luckily, it is not so hard to say what the inverse to $i^*$ at such a cocartesian fibration $p \co E \to C$ ought to be: it ought to be the localisation $E[V^{-1}]$ at the collection $V$ of morphisms that are cocartesian lifts of morphisms belonging to $W$.
Most of the work goes in to showing that $E[V^{-1}] \to D$ is a cocartesian fibration and that its pullback along $i$ is $p \co E \to C$ (\cf \cref{q-prime-cocart}).
This reduces to fairly concrete claims about mapping spaces of $E[V^{-1}]$ (\cref{mapping-spaces-cart}), which we prove using a general, functorial description of mapping spaces of localisations (\cf \cref{ex-localisation}).

The other ingredient is a model-independent way of decomposing an arbitrary category into simpler pieces.
Specifically, given an arbitrary functor $f \co X \to Y$, we explain how to build the full image of $f$ as the colimit of a sequence $X \to X_1 \to X_2 \cdots$ where each $X_n$ is a localisation of a cocomma involving $X$ and $X_{n-1}$ (\cf \cref{join-correctness}).
This is a directed analogue of Rijke's join construction~\cite{rijke17}.
Applied to the core inclusion $C^\simeq \to C$, this lets us deduce \cref{straightening} from descent.

Given a cocartesian fibration $p \co E \to X$, its directed univalent completion ought to be the full image factorisation of its straightening $X \to \cat_\infty$.
In our setting, we do not want to assume that $\cat_\infty$ exists, since we are trying to construct it, but we can still turn this idea into a \emph{construction} of the directed univalent completion, by playing out the directed join construction (\cf \cref{virtual-join-correctness}).
In the undirected setting, this idea was recently used by Uemura to define univalent completion of families~\cite{uemura25}.

\subsection*{Acknowledgements}

We thank Dani{\"e}l Apol for helpful discussions.

\section{Preliminaries}

In this section we describe some of the basic notions and results pertaining to spaces and categories used in the rest of the paper.
For a more thorough development of synthetic category theory we refer to work in progress of Cisinski, Cnossen, Nguyen, and Walde~\cite{cisinski25}.
We allow ourselves to work at a high level of abstraction and usually neglect, for example, to build witnesses that certain diagrams commute, with the hope that the reader can fill in the details in whatever precise setting%
\footnote{E.g., quasicategories, categories internal to an $(\infty,1)$-topos, or simplicial type theory.}
they are interested in.

We take as primitive the notion of space and the space $\cat^\simeq$ of categories.
For categories $C$ and $D$ we write $\map(C, D)$ for the space of functors $C \to D$.
We assume that functors can be composed and that functor composition is associative up to homotopy.
Thus categories form what is sometimes called a \emph{wild} category.%
\footnote{%
Here we use the word `wild' to emphasize that we never need infinitely many coherences; we have not investigated how much coherence is needed.
}
We assume that there is a terminal category $1$.
We assume that we may form pullbacks, pushouts, and functor categories.
We write the latter as $\fun(C, D)$.
We denote $\map(1, C)$ by $\ob(C)$ and refer to it as the \emph{space of objects} of $C$.
We take $c : C$ to mean $c : \ob(C)$.

We write $\II$ for the category that is freely generated by
objects $0, 1 : \II$ and a morphism $0 \to 1$.
We write $\arr(C)$ for the functor category $\fun(\II, C)$.
We write mapping spaces in $C$ as $C(x, y)$.
Morphisms in functor categories are called natural transformations and are depicted as 2-cells.

\subsection{Comma and slice categories}

Given functors $F \co B \to A$ and $G \co C \to A$ with common codomain, the comma category $F \downarrow G$ is given by the pullback of categories
\[
\begin{tikzcd}
  F \downarrow G
  \ar[r]
  \ar[d]
  \arrow[dr, phantom, "\lrcorner" very near start]
&
  \arr(A)
  \ar[d, "{(\dom, \cod)}"]
\\
  B \times C
  \ar[r, "F \times G"]
&
  A \times A \rlap{.}
\end{tikzcd}
\]
Equivalently, the comma category is cofreely generated by the following lax square.
\[
\begin{tikzcd}
  F \downarrow G
  \ar[r]
  \ar[d]
  \twocellflip{dr}
&
  C
  \ar[d, "G"]
\\
  B
  \ar[r, "F"']
&
  A \rlap{.}
\end{tikzcd}
\]
In the comma notation, we use categories as a stand-in for the identity functor on them.
For example, $A \downarrow G$ and $F \downarrow A$ denote commas with the identity functor $A \to A$.
We also use objects as a stand-in for the functor from the terminal category selecting them.
For example, $a \downarrow G$ and $F \downarrow a$ denote commas with the functor $a \co 1 \to A$.
Given $F \co C \to A$ and $a : A$, we may also write $a \downarrow C$ for $a \downarrow F$ when $F$ is clear from context.
We write the slice $A \downarrow a$ also as $A / a$.

\subsection{The fundamental theorem of category theory}

We assume the \emph{fundamental theorem of category theory}, saying that $\II$ detects equivalences of categories.
Equivalently, a functor $F \co C \to D$ is invertible if and only if it is
\begin{itemize}
\item
\emph{surjective}: for every $d : D$, there is $c : C$ with $F(c) \simeq d$,
\item
\emph{fully faithful}: for $x, y : C$, the action $C(x, y) \to D(F x, F y)$ of $F$ on morphisms is invertible.
\end{itemize}

\subsection{Full and wide subcategories}

We assume that we may form subcategories and that they enjoy the expected mapping-in universal properties.
That is, if $C$ is a category and $P$ is a predicate on the objects of $C$ (\ie, a monomorphism $P \hookrightarrow \ob(C)$), then there is a category $C_P$ with a functor $i \co C_P \to C$ such that for any category $X$, the map
\[
\begin{tikzcd}
  \map(X, C_p)
  \ar[r]
&
  \map(X, C)
\end{tikzcd}
\]
is a monic, and its image is spanned by those functors $F \co X \to C$ such that for every object $x : X$, $F(x)$ lies in $P$ (in other words, $\ob(F) \co \ob(X) \to \ob(C)$ factors through $P \hookrightarrow \ob(C)$).
We call $C_P$ the \emph{full subcategory of $C$ spanned by $P$}.

We similarly assume that we may form \emph{wide} subcategories.
That is, let $C$ be a category and let $P$ be a predicate on \emph{morphisms} of $C$ which is closed under composition.%
\footnote{This includes \emph{nullary} composition, \ie, we assume that $P$ contains all isomorphisms of $C$.}
Then there is a category $C_P$ with a functor $i \co C_P \to C$ such that for any category $X$, the map
\[
\begin{tikzcd}
  \map(X, C_p)
  \ar[r]
&
  \map(X, C)
\end{tikzcd}
\]
is monic, and its image is spanned by those functors $F \co X \to C$ such that for every morphism $f$ in $X$, the map $F(f)$ lies in $P$.

\subsection{Adjunctions}

Given categories $C$ and $D$ with functors $L \co C \to D$ and $R \co D \to C$, an adjunction $L \dashv R$ consists of an equivalence of categories $e \co C \downarrow R \simeq L \downarrow D$ over $C \times D$.
We say that $e$ exhibits $L$ as the left adjoint of $R$.
We have that left adjoints are unique when they exist, and that $R$ has a left adjoint if and only if the category $c \downarrow R$ has an initial object for every $c : C$.
An equivalent way of exhibiting $L$ as left adjoint to $R$ is by giving a natural transformation $\eta \co \id_C \to RL$ such that $\eta_c \co c \to R L c$ is initial in $c \downarrow R$ for every $c$.

We also make use of adjunctions in the wild setting.

\subsection{Spaces, groupoids, and extensivity}

Given a category $C$, we think of the mapping space $\map(1, C)$ as the space of \emph{objects} of $C$ and denote it $\ob(C)$.
The wild functor $\ob$ has a fully faithful left adjoint, which sends a space $X$ to the groupoid $D X = \bigsqcup_X 1$ given by the coproduct%
\footnote{In more traditional contexts, the term \emph{coproduct} is reserved for the case when $X$ is a set.}
of $X$-many copies of the terminal category (equivalently, the tensor of $1$ with $X$).
We will usually leave this inclusion from spaces to categories implicit.%
\footnote{We are not aware of any established notation for it.}
This induces an equivalence between spaces and groupoids, \ie, categories in which every morphism is invertible.

We ask that the coproduct defining $D X$ is is \emph{extensive}, \ie, satisfies descent.
Explicitly, this means that the (wild) functor
\[
  \cat / D X \to \cat^X
\]
which computes the fibres of a category over $D X$ is invertible (and so its inverse is the left adjoint $\bigsqcup_X$).
This means that a category $C$ over a given groupoid is determined by its family of fibres, and similarly for a functor of categories over a groupoid.

\subsection{Left and right fibrations}

We say that a functor $D \to C$ is a left (\resp right) fibration if it is orthogonal against the left endpoint inclusion $0 \co 1 \to \II$ (\resp right endpoint inclusion $1 \co 1 \to \II$), in the sense that the square of spaces
\[
\begin{tikzcd}
  \map(\II, D)
  \ar[r]
  \ar[d, "\dom"']
&
  \map(\II, C)
  \ar[d, "\dom"]
\\
  \ob(D)
  \ar[r]
&
  \ob(C)
\end{tikzcd}
\]
is cartesian.
If that is the case, one can show that the square of \emph{categories}
\[
\begin{tikzcd}
  \arr(D)
  \ar[r]
  \ar[d, "\dom"']
&
  \arr(C)
  \ar[d, "\dom"]
\\
  C
  \ar[r]
&
  D
\end{tikzcd}
\]
is also cartesian (by showing that the cartesian gap map is surjective and fully faithful).
We denote the (wild) category of left (\resp right) fibrations over $C$ by $\lfib(C)$ (\resp $\rfib(C)$).

\subsection{Cofinal functors}

We say a functor $F \co C \to D$ is \emph{left} (\resp right) \emph{cofinal} if it is orthogonal against left (\resp right) fibrations.
We assume that every functor $F \co C \to D$ factors as a left cofinal functor $C \to E$ followed by a left fibration $E \to D$.
It follows that this factorisation is unique.
Dually, every functor factors uniquely as a right cofinal functor followed by a right fibration.

\subsection{Left Kan extension} \label{subsection-lke}

For a left fibration $p \co E \to C$ and an arbitrary functor $F \co C \to D$, we may factor the composite $F p \co E \to D$ as a left cofinal map $E \to \overline{E}$ followed by a left fibration $\overline{E} \to D$.
This defines a left adjoint $F_! \co \lfib(C) \to \lfib(D)$ to the base-change functor $F^* \co \lfib(D) \to \lfib(C)$ (which sends a left fibration over $D$ to its pullback along $F$).

Left Kan extension admits a useful fibrewise description.
For $E \to C$ a left fibration and $d : D$ an arbitrary object, we have a functor $E \downarrow d \to \overline{E}_d$ to the fibre of $\overline{E} \to D$ over $d$.
The relevant assertion is that this functor exhibits $\overline{E}_d$ as the free groupoid on $E \downarrow d$, \ie, the localisation at all its arrows.
Dually, if $E \to C$ is a right fibration then we have a functor $d \downarrow E \to \overline{E}_d$ which exhibits $\overline{E}_d$ as the free groupoid on $d \downarrow E$.

Abstractly, we have the lax square of categories
\[
\begin{tikzcd}
  E \downarrow d
  \ar[r, "\cod"]
  \ar[d, "\dom"']
  \twocellflip{dr}
&
  1
  \ar[d, "d"]
\\
  C
  \ar[r, "F"']
&
  D \rlap{.}
\end{tikzcd}
\]
This induces a lax square
\[
\begin{tikzcd}
  \lfib(D)
  \ar[r, "F^*"]
  \ar[d, "d^*"']
  \twocellflip{dr}
&
  \lfib(C)
  \ar[d, "\dom^*"]
\\
  \lfib(1)
  \ar[r, "\cod^*"']
&
  \lfib(E \downarrow d) \rlap{,}
\end{tikzcd}
\]
and the relevant assertion is that its mate $\cod_! \dom^* \to d^* F_!$ is invertible.
More generally, the same property holds for the lax square coming from \emph{any} comma category: for functors $F \co A \to B$ and $G \co C \to B$, we have $G^* F_! \simeq \cod_! \dom^*$, where $\dom \co F \downarrow G \to A$ and $\cod \co F \downarrow G \to C$.

Dually, for restriction of left Kan extension of right fibrations, we have $G^* F_! \simeq \dom_! \cod^*$ where $\cod \co G \downarrow F \to A$ and $\dom \co G \downarrow F \to C$.

\subsection{The Yoneda lemma}

Let $C$ be a category with an initial object $c$.
We assume the Yoneda lemma in the following form: the functor $c \co 1 \to C$ is left cofinal.
In particular, for a category $C$ with an arbitrary object $c$, we can form the coslice category $c \downarrow C$ and observe that it has an initial object given by the identity on $c$.
So the corresponding functor $1 \to c \downarrow C$ is left cofinal.
Since $\cod \co c \downarrow C \to C$ is a left fibration, this means that it gives the unique (left cofinal, left fibration) factorisation of $c \co 1 \to C$.

In particular, suppose $F \co C \to D$ is some functor.
Since $1 \to c \downarrow C$ and $1 \to F c \downarrow D$ are both left cofinal, by 2-out-of-3 the induced functor $c \downarrow C \to F c \downarrow D$ is also left cofinal.
This exhibits $F c \downarrow D$ as the left Kan extension of $c \downarrow C$ along $F \co C \to D$.
Dually, the inclusion of a terminal object is right cofinal, and the left Kan extension of $C / c$ is $D / F c$.

Since the fibre of $D \downarrow F c$ over $d : D$ is the mapping space $D(d, F c)$, this lets us express mapping spaces of $D$ in terms of left Kan extension $F_! \co \rfib(C) \to \rfib(D)$ applied to representable right fibrations.

\subsection{Descent for left fibrations}

We assume that left fibrations descend along pushouts.
This means that for any pushout square of categories
\[
\begin{tikzcd}
  A
  \ar[r, "g"]
  \ar[d, "f"']
&
  B
  \ar[d, "l"]
\\
  C
  \ar[r, "k"']
&
  D \rlap{,}
  \ar[ul, phantom, "\ulcorner" very near start]
\end{tikzcd}
\]
the induced square
\[
\begin{tikzcd}
  \lfib(D)
  \ar[r, "k^*"]
  \ar[d, "l^*"']
&
  \lfib(C)
  \ar[d, "f^*"']
\\
  \lfib(B)
  \ar[r, "g^*"']
&
  \lfib(A)
\end{tikzcd}
\]
is a pullback square of categories.
This is a consequence of straightening--unstraightening for left fibrations.

\subsection{Limits and colimits in functor categories}

Given categories $C$ and $D$, we assume that the family of evaluation functors $\ev_c \co \fun(C, D) \to D$ for $c : C$ creates (co)limits.
This means that (co)limits are given levelwise when they exist at every level.
Such levelwise (co)limits are preserved under restriction $f^* \co \fun(C, D) \to \fun(C', D)$ along any functor $f \co C' \to C$.

In practice, we apply this only to describe pushouts, pullbacks, sequential colimits, and initial objects.
We apply it also in $\lfib(C)$, \ie, $\fun(C, \spc)$.
The relevant claim is then that for $f \co C' \to C$, restriction $f^* \co \lfib(C) \to \lfib(C')$ preserves pushouts and sequential colimits.

\subsection{Mapping spaces of sequential colimits}

A \emph{sequence} in a (wild) category $C$ consists of a family of objects $a \co \nat \to \ob(C)$ with morphisms $f_n \co a_n \to a_{n+1}$ for $n : \nat$.
A sequential colimit of such a sequence $a_0 \xrightarrow{f_0} x_1 \xrightarrow{f_1} \cdots$ is an initial cocone under it.

Sequential colimits are filtered colimits, so commute with pullbacks in spaces, and hence also in functor categories into spaces such as $\lfib(C)$ and $\rfib(C)$.

Consider a sequence of categories $A_0 \xrightarrow{F_0} A_1 \xrightarrow{F_1} \cdots$ with colimit $A_\infty$ with coprojections $i_n \co A_n \to A_\infty$.
Since sequential colimits are filtered, mapping spaces of $A_\infty$ admit a simple description: for objects $x, y : A_0$, we have a cocone under
\[
\begin{tikzcd}
  A_0(x, y)
  \ar[r]
&
  A_1(F_0 x, F_0 y)
  \ar[r]
&
  \cdots
\end{tikzcd}
\]
with point $A_\infty(i_0x, i_0y)$ given by the actions of $i_n$ on morphisms, and this exhibits the space $A_\infty(i_0 x,i_0 y)$ as a sequential colimit.
One can use this fact to show that $\ob$ and $\map(\II,-)$ commute with sequential colimits.

\subsection{Cocartesian fibrations}

Given a functor $p \co E \to B$ and a morphism $f \co x \to y$ in $E$, we say that $f$ is \emph{$p$-cocartesian} if the square of spaces
\[
\begin{tikzcd}
  E(y, z)
  \ar[r, "- \circ f"]
  \ar[d]
&
  E(x, z)
  \ar[d]
\\
  B(p y, p z)
  \ar[r, "- \circ p f"']
&
  B(p x, p z)
\end{tikzcd}
\]
is cartesian for all $z : E$.
Let $\arrcocart(p) \subseteq \arr(E)$ denote the full subcategory spanned by $p$-cocartesian maps.
One can show that the cartesian gap map of the square of categories
\[
\begin{tikzcd}
  \arrcocart(p)
  \ar[r, "\dom"]
  \ar[d]
&
  E
  \ar[d, "p"]
\\
  \arr(B)
  \ar[r, "\dom"]
&
  B
\end{tikzcd}
\]
is fully faithful.
We say $p$ is a \emph{cocartesian fibration} if that gap map is invertible.
Equivalently, the gap map is surjective, \ie, for objects $x : E$ and $y : B$ with $f : B(p x, y)$, there is a lift $y' : E$ of $y$ with a $p$-cocartesian lift $f' : E(x, y')$ of $f$.

Cocartesian fibrations are stable under base change.
That is, for $p : E \to B$ a cocartesian fibration and $x \co C \to B$ an arbitrary functor, the base change $x^* p \co E \times_B C \to C$ of $p$ is also a cocartesian fibration.
We might denote $E \times_B C$ as $x^* E$ in this context.
A map in $x^* E$ is $x^* p$-cocartesian if and only if its image in $E$ is $p$-cocartesian.

For cocartesian fibrations $p_0 \co E_0 \to B$ and $p_1 \co E_1 \to B$ with base $B$, we say that a functor $F \co E_0 \to E_1$ of categories over $B$ is \emph{cocartesian} if it sends $p_0$-cocartesian maps to $p_1$-cocartesian maps.
We denote by $\cocartfib(B)$ the resulting (wild) category of cocartesian fibrations over $B$.
If a cocartesian functor $F \co E_0 \to E_1$ over $B$ is \emph{fibrewise} fully faithful, then it is fully faithful.
This means that invertibility of cocartesian functors can be detected fibrewise.

\subsection{Cocartesian transport}
\label{cocartesian-transport}

Given a cocartesian fibration $p \co E \to B$ and a category $C$, we have a correspondence between functors $C \to \arrcocart(p)$ and functors $C \to \arr(B) \times_B E$.
The data of a functor $C \to \arrcocart(p)$ is equivalently the data of two functors $F, G \co C \to E$ with a natural transformation $\alpha \co F \to G$ all of whose components are $p$-cocartesian.
The data of a functor $C \to \arr(B) \times_B E$ is the data of functors $F \co C \to E$ and $G_0 \co C \to B$ and a natural transformation $\alpha_0 \co p F \to G_0$.
So given the latter data, we have a unique lift $G \co C \to E$ of $G_0$ together with a lift $\alpha \co F \to G$ of $\alpha_0$, such that the components of $\alpha$ are $p$-cocartesian.

In particular, pulling back a cocartesian fibration is covariantly functorial.
That is, given a category $C$ (\eg, the terminal category) and functors $x, y \co C \to B$ with a natural transformation $f \co x \to y$, we obtain a functor $f_! \co x^* E \to y^* E$ of cocartesian fibrations over $C$.
The image of $f_!$ in $E$ has cocartesian components and lifts $f$ in the appropriate sense.
Thus, the assignment $x \mapsto x^* E$ defines a (wild) functor $\fun(C, B) \to \cocartfib(C)$.

\subsection{Descent in spaces and left fibrations}

An excellent property of the category $\spc$ of spaces is that it has all colimits and these satisfy descent.
Abstractly, this means that given some diagram of spaces $X \co J \to \spc$, the functor
\[
\begin{tikzcd}
  \spc / \colim_J X
  \ar[r]
&
  \displaystyle\lim_{j : J} \spc / X_j
\end{tikzcd}
\]
given informally by base-change is an equivalence of categories.
Since this functor has a left adjoint (given informally by $\colim_J$), this says that the unit and counit of the adjunction are invertible.
Invertibility of the counit means that colimits are \emph{universal}, \ie, stable under base change.
Invertibility of the unit means that the wide subcategory of $\arr(\spc)$ spanned by \emph{cartesian squares} is closed under colimits.

Since colimits and pullback in $\lfib(C)$ are computed pointwise, colimits in $\lfib(C)$ also satisfy descent.

We use descent only for pushouts and sequential colimits.

\subsection{Mapping spaces of cocommas}

The \emph{cocomma} of a span of categories $B \xleftarrow{f} A \xrightarrow{g} C$ is the pushout of categories
\[
\begin{tikzcd}[column sep=3em]
  A \sqcup A
  \ar[r, "{(\id, \bracks{0, 1})}"]
  \ar[d, "f \sqcup g"']
&
  B \sqcup C
  \ar[d, "\bracks{k, l}"]
\\
  A \times \II
  \ar[r]
&
  \cocomma{f}{g} \rlap{.}
\end{tikzcd}
\]
More usefully, this means that the cocomma is freely generated by the lax square
\[
\begin{tikzcd}
  A
  \ar[r, "g"]
  \ar[d, "f"']
  \twocellflip{dr}
&
  C
  \ar[d, "l"]
\\
  B
  \ar[r, "k"']
&
  \cocomma{f}{g} \rlap{.}
\end{tikzcd}
\]

By descent for left fibrations, the lax square
\[
\begin{tikzcd}
  \lfib(\cocomma{f}{g})
  \ar[r, "k^*"]
  \ar[d, "l^*"']
  \twocellflip{dr}
&
  \lfib(B)
  \ar[d, "f^*"]
\\
  \lfib(C)
  \ar[r, "g^*"']
&
  \lfib(A)
\end{tikzcd}
\]
exhibits $\lfib(\cocomma{f}{g})$ as a comma category.
There is a general construction for finding left adjoints to the projections from a comma category, using in this case that $g^*$ has a left adjoint and that $\lfib(B)$ has an initial object preserved by $f^*$.
In this case, we get that the left adjoints $k_!$ and $l_!$ respectively classify the lax squares
\begin{align*}
\begin{tikzcd}[ampersand replacement=\&]
  \lfib(B)
  \ar[r, "g_! f^*"]
  \ar[d, equals]
  \twocellflip{dr}["\eta f^*" pos=0.3,outer sep=-1pt]
\&
  \lfib(C)
  \ar[d, "g^*"]
\\
  \lfib(B)
  \ar[r, "f^*"']
\&
  \lfib(A)
\end{tikzcd}
&&
\begin{tikzcd}[ampersand replacement=\&]
  \lfib(C)
  \ar[r, equals]
  \ar[d, "0"]
  \twocellflip{dr}["{!}" pos=0.3]
\&
  \lfib(C)
  \ar[d, "g^*"]
\\
  \lfib(B)
  \ar[r, "f^*"']
\&
  \lfib(A)
\end{tikzcd}
\end{align*}
In particular, the units $\id_{\lfib(B)} \to k_! k^*$ and $\id_{\lfib(C)} \to l_! l^*$ are invertible, meaning that the inclusions $k$ and $l$ into the cocomma are fully faithful.
Moreover, the mate $g_! f^* \to l^* k_!$ is invertible, and $k^* l_! \co \lfib(C) \to \lfib(B)$ is the initial functor.

\subsection{Localisation and pullback}

The inclusion from spaces to categories admits a left adjoint $\verts{-}$, which we refer to as \emph{localisation}.
This for example shows up when computing left Kan extensions.
Localization preserves finite products.
To see this the product of categories $B$ and $C$, one uses that $- \times C$ admits a right adjoint $\fun(C, -)$ which preserves groupoids.

\begin{corollary}\label{localisation-pullback}
Consider a cartesian square of categories
\[
\begin{tikzcd}
  D
  \ar[r]
  \ar[d]
&
  C
  \ar[d]
\\
  B
  \ar[r]
&
  A
\end{tikzcd}
\]
where $A$ is a groupoid.
Then the square remains cartesian after localisation.
\end{corollary}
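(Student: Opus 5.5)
The plan is to use extensivity over the groupoid $A$ to reduce to the fibrewise statement, which is just preservation of finite products. Write $A = D X$ for a space $X$. By extensivity, $B$ and $C$ decompose as $B \simeq \bigsqcup_{x : X} B_x$ and $C \simeq \bigsqcup_{x : X} C_x$, where $B_x$ and $C_x$ are the fibres over $x$. Pullbacks over $A$ are computed fibrewise, so $D \simeq \bigsqcup_{x : X} B_x \times C_x$.

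Next, localisation $\verts{-}$ is a left adjoint, so it preserves coproducts, in particular the coproducts indexed by the space $X$. The coproduct $\bigsqcup_X 1$ is the left adjoint of $\ob$ applied to $X$; its localisation is the groupoid on $X$ again, since the inclusion of spaces into categories is fully faithful. Hence $\verts{A} \simeq A$, and we obtain
\[
\verts{B} \simeq \bigsqcup_{x : X} \verts{B_x}, \qquad \verts{C} \simeq \bigsqcup_{x : X} \verts{C_x}, \qquad \verts{D} \simeq \bigsqcup_{x : X} \verts{B_x \times C_x} \simeq \bigsqcup_{x : X} \verts{B_x} \times \verts{C_x},
\]
where the last step uses that localisation preserves finite products. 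These coproducts are formed in categories, but the summands are groupoids, so by extensivity they are groupoids lying over $D X$ with the indicated fibres.

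Finally, the localised square lives in categories over the groupoid $D X$. By extensivity, it is cartesian if and only if it is fibrewise cartesian, and the fibres of $\verts{D}$ over $x$ are exactly $\verts{B_x} \times \verts{C_x}$, as required. To make this precise, one must check that the comparison map $\verts{D} \to \verts{B} \times_A \verts{C}$ is the coproduct of the fibrewise maps $\verts{B_x \times C_x} \to \verts{B_x} \times \verts{C_x}$. This holds by naturality of the decompositions.

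The main obstacle is the bookkeeping that localisation commutes with the extensive decomposition over $D X$. Concretely, one needs that the map $\verts{B} \to \verts{A} \simeq A$ has fibres $\verts{B_x}$. I would justify this by noting that $\bigsqcup_X$ is left adjoint to the fibre functor, so composing left adjoints shows that $\verts{-}$ commutes with $\bigsqcup_X$. The rest is formal.
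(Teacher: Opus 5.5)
Your proposal is correct and follows essentially the same argument as the paper: use extensivity over the groupoid $A$ to work fibrewise, then invoke that localisation preserves the binary product $B_x \times C_x$. You also spell out bookkeeping the paper leaves implicit, namely that $\verts{-}$, as a left adjoint, commutes with the $X$-indexed coproduct, so that $\verts{B} \to A$ has fibres $\verts{B_x}$.
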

\begin{proof}
By extensivity, we may work fibrewise over $a : A$.
The claim then reduces to the fact localization preserves the product of $B_a$ and $C_a$.
\end{proof}

\subsection{Left adjoints from arrow categories}

The following lemma gives a simple description of left adjoints from arrow categories.

\begin{lemma}\label{arr-left-adj}
Consider a functor $m \co C \to \arr(D)$, corresponding to a natural transformation $f^* \to g^*$ of functors $f^*, g^* \co C \to D$.
Suppose $f^*$ and $g^*$ have left adjoints $f_!, g_! \co D \to C$ and that $C$ has pushouts.
Then $m \co C \to \arr(D)$ has a left adjoint given by pushout
\[
\begin{tikzcd}
  g_! \dom
  \ar[r]
  \ar[d]
&
  g_! \cod
  \ar[d]
\\
  f_! \dom
  \ar[r]
&
  m \rlap{.}
  \arrow[ul, phantom, "\ulcorner" very near start]
\end{tikzcd}
\]
\end{lemma}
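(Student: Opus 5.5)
To prove \cref{arr-left-adj}, we show that for an arrow $u \co d_0 \to d_1$ in $D$ (an object of $\arr(D)$) and an object $c : C$, there is a natural equivalence
\[
  C(L u, c) \simeq \arr(D)(u, m c),
\]
where $L u$ denotes the pushout $f_! d_0 \sqcup_{g_! d_0} g_! d_1$. Here the map $g_! d_0 \to f_! d_0$ is the one induced by the transformation $f^* \to g^*$, namely the mate $g_! \to f_!$ of $m$. Concretely, $g_! d_0 \to f_! d_0$ is adjoint to the composite $d_0 \to f^* f_! d_0 \to g^* f_! d_0$ of the unit of $f_! \dashv f^*$ with the component of $m$ at $f_! d_0$. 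The other map $g_! d_0 \to g_! d_1$ is $g_! u$. The pushout exists because $C$ has pushouts, and its formation is functorial in $u$.

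Mapping out of a pushout turns it into a pullback of spaces:
\[
  C(L u, c) \simeq C(f_! d_0, c) \times_{C(g_! d_0, c)} C(g_! d_1, c).
\]
Applying the adjunctions $f_! \dashv f^*$ and $g_! \dashv g^*$ termwise rewrites this as
\[
  D(d_0, f^* c) \times_{D(d_0, g^* c)} D(d_1, g^* c).
\]
We must check that the two legs become the expected maps under adjunction. The leg from $D(d_0, f^* c)$ should be postcomposition with $m_c \co f^* c \to g^* c$, and the leg from $D(d_1, g^* c)$ should be precomposition with $u$. The second follows from naturality of the $g$-adjunction. The first follows from the mate construction and the triangle identities: the mate $g_! \to f_!$ corresponds, under the hom equivalences, to postcomposition with $m$. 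This compatibility is the only real content and the main point to get right.

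It then remains to identify the pullback with $\arr(D)(u, m c)$. Mapping spaces in $\arr(D) = \fun(\II, D)$ between $u \co d_0 \to d_1$ and $v \co e_0 \to e_1$ are computed as
\[
  \arr(D)(u, v) \simeq D(d_0, e_0) \times_{D(d_0, e_1)} D(d_1, e_1),
\]
with the maps given by postcomposition with $v$ and precomposition with $u$. This uses that $\II$ is freely generated by one arrow: natural transformations are pairs of components together with a commuting square. Taking $v = m c \co f^* c \to g^* c$ gives exactly the pullback from the previous step.

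The resulting equivalence is natural in $c$, since each step is. By the characterisation of adjoints through representability (for each $u$, the object $L u$ represents $\arr(D)(u, m(-))$, equivalently $L u \to$ the corresponding initial object of $u \downarrow m$), this gives $L \dashv m$.
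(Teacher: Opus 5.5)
Your proposal is correct and takes the same approach as the paper. The paper compares the two spaces of dashed square completions and calls this straightforward; you spell out that comparison in full: the pushout becomes a pullback of mapping spaces, each term is transposed along its adjunction, and the result is matched with the mapping space in $\arr(D)$. You also check the one compatibility that carries content, namely that precomposition with the mate $g_! d_0 \to f_! d_0$ transposes to postcomposition with $m_c$.
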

\begin{proof}
Given an object $x \to y$ of $\arr(D)$ and $c : C$, we have to show that the space of dashed square completions
\[
\begin{tikzcd}
  x
  \ar[r, dashed]
  \ar[d]
&
  f^* x
  \ar[d]
\\
  y
  \ar[r, dashed]
&
  g^* c
\end{tikzcd}
\]
is naturally equivalent to the space of dashed square completions
\[
\begin{tikzcd}
  g_1 x
  \ar[r]
  \ar[d]
&
  g_! y
  \ar[d, dashed]
\\
  f_! x
  \ar[r, dashed]
&
  c \rlap{.}
\end{tikzcd}
\]
This is straightforward.
\end{proof}

\subsection{2-limits and 2-colimits of categories}

Limits and colimits of categories enjoy $(\infty, 2)$-categorical universal properties.
Consider for example the coproduct of categories $C$ and $D$.
A priori, it has a $(\infty,1)$-categorical universal property, which expresses that the mapping space $\map(C \sqcup D, X)$ is equivalent to the product of mapping spaces $\map(C, X) \times \map(D, X)$.
But more is true: we have an equivalence of \emph{categories} $\fun(C \sqcup D, X) \simeq \fun(C, X) \times \fun(D, X)$.
To see this, note that $\map(Y, \fun(C \sqcup D, X))$ and $\map(Y, \fun(C, X) \times \fun(D, X))$ are both equivalent to $\map(C, \fun(Y, X)) \times \map(D, \fun(Y, X))$.
Similarly, we have that $\fun(X, C \times D) \simeq \fun(X, C) \times \fun(X, D)$.

We will make implicit use of this type of result for constructions like comma categories and localisations of categories.

\subsection{Cocartesian fibrations and the Conduch\'e condition}

We say that a functor $p \co E \to B$ is a \emph{Conduch\'e} fibration%
\footnote{%
It is true but not direct that a functor $p$ is a Conduch{\'e} fibration if and only if it is exponentiable, in the sense that base change along $p$ admits a right adjoint~\cite{ayala20}.
We do not use this.
}
if for objects $x, z : E$ and $b : B$, the square of spaces
\[
\begin{tikzcd}
  \verts{x \downarrow \fib_p(b) \downarrow z}
  \ar[r]
  \ar[d]
&
  E(x, z)
  \ar[d]
\\
  B(p x, b) \times B(b, p z)
  \ar[r]
&
  B(p x, p z)
\end{tikzcd}
\]
is cartesian.
Here $x \downarrow \fib_p(b) \downarrow z$ denotes the iterated comma category whose objects are given by a lift $y : E$ of $b$ with morphisms $x \to y$ and $y \to z$.

\begin{lemma}\label{conduche-of-cocartesian}
Every cocartesian fibration is a Conduch\'e fibration.
\end{lemma}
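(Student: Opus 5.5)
The plan is to check cartesianness fibrewise over the base $B(p x, b) \times B(b, p z)$ and reduce the fibre computation to the existence of initial objects, using a cocartesian lift of the first component.

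First, the forgetful functor $x \downarrow \fib_p(b) \downarrow z \to B(p x, b) \times B(b, p z)$ is a functor into a groupoid. Its target is already a groupoid and so is unchanged by localisation. Hence, by extensivity and \cref{localisation-pullback}, localising commutes with taking fibres over a point $(g, h)$. It therefore suffices to show the following: for fixed $g \co p x \to b$ and $h \co b \to p z$, the localisation of the fibre category $K_{g,h}$ is equivalent to the fibre $E(x, z)_{hg}$ of $E(x, z) \to B(p x, p z)$ over $h g$. Here $K_{g,h}$ has objects $y : E_b$ together with $u \co x \to y$ lying over $g$ and $v \co y \to z$ lying over $h$. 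The equivalence should be compatible with the evident map $(y, u, v) \mapsto v u$.

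Second, I would use cocartesianness to rewrite $K_{g,h}$. Choose a $p$-cocartesian lift $g' \co x \to g_! x$ of $g$. By the defining cartesian square for $g'$, precomposition with $g'$ induces, for each $y : E_b$, an equivalence between fibrewise maps $g_! x \to y$ over $\id_b$ and maps $x \to y$ over $g$. This is natural in $y$, so it identifies $K_{g,h}$ with the category $K'$ of triples $(y, u', v)$ with $u' \co g_! x \to y$ in $E_b$ and $v \co y \to z$ over $h$. The composite $v u$ then corresponds to $v u' g'$. Applying the cocartesian property of $g'$ once more identifies $E(x,z)_{hg}$ with $E(g_! x, z)_h$. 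We are thus reduced to showing that the functor $K' \to E(g_! x, z)_h$, $(y,u',v) \mapsto v u'$, exhibits its target as the localisation of $K'$.

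Third, again by extensivity, this functor into a groupoid is a localisation as soon as each of its fibres has contractible localisation. The fibre over $w \co g_! x \to z$ is the category of factorisations of $w$ through an object of $E_b$, with first factor in the fibre. It has the initial object $(g_! x, \id, w)$. By the Yoneda lemma as assumed in the paper, the inclusion $1 \to$ fibre is left cofinal. A left cofinal functor becomes invertible after localisation, since a groupoid gives a left fibration over it. So the fibre localises to a point.

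The main obstacle is the second step. There one has to make the identification $K_{g,h} \simeq K'$ functorial, as an equivalence of categories rather than merely of spaces of objects. I would do this by exhibiting $K'$ as the pullback of $K_{g,h}$ along $g'$, using the description of cocartesian transport as a functor $E_{pr} \to E_b$ together with its universal property (\cref{cocartesian-transport}) applied to the category $g_! x \downarrow E_b$. The remaining steps are formal.
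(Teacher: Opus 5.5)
Your argument is correct. It rests on the same fact as the paper's proof: the cocartesian lift $g' \co x \to g_! x$ is initial among maps from $x$ into $\fib_p(b)$ lying over $g$. The difference is in how the computation is organised.

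The paper works globally. The $p$-cocartesian arrows span a coreflective subcategory of $x \downarrow \fib_p(b)$, equivalent to the groupoid $B(px,b)$. From this the paper deduces $\verts{x \downarrow \fib_p(b) \downarrow z} \simeq B(px,b) \downarrow z$, i.e.\ the space of pairs $(g, v \co g_! x \to z)$. It then takes fibres over $B(px,b)$ alone and applies the cocartesian square of $x \to g_! x$ once.

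You instead split over $(g,h)$ at the start via \cref{localisation-pullback}. You then transport along $g'$; your $K_{g,h} \simeq K'$ is the paper's coreflection read one component at a time. Finally you compute $\verts{K'}$ by contracting along the composition map, whose fibres have the identity factorisation as initial object.

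The paper's route is shorter and gives a global description of the localised comma category. Yours uses only tools the paper lists explicitly: extensivity, \cref{localisation-pullback}, and Yoneda as left cofinality. The paper's one-line ``in this way one shows'' step implicitly uses two further facts: localisation inverts coreflective inclusions, and coreflectivity passes to the restricted right fibration $x \downarrow \fib_p(b) \downarrow z \to x \downarrow \fib_p(b)$.

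The step you flag as the main obstacle is actually routine, and you do not need the transport functor $g_!$ for it. $K'$ and $K_{g,h}$ are the base changes to $(\fib_p(b) \downarrow z)_h$ of two left fibrations over $\fib_p(b)$: the coslice $g_! x \downarrow \fib_p(b)$ of the fibre, and the $g$-component $(x \downarrow \fib_p(b))_g$. Precomposition with $g'$ is a functor between these over $\fib_p(b)$. Fibrewise it is the equivalence $\fib_p(b)(g_! x, y) \simeq E(x,y)_g$ supplied by the cocartesian square. Invertibility of functors between left fibrations over a common base is detected fibrewise, so this functor, and hence its base change $K' \to K_{g,h}$, is an equivalence.
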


\begin{proof}
Consider a cocartesian fibration $p \co E \to B$.
The comma category $x \downarrow \fib_p(b)$ has a coreflective subcategory spanned by $p$-cocartesian maps $x \to y$.
This subcategory is equivalent to the space $B(p x,b)$.
In this way one shows that the localisation $\verts{x \downarrow \fib_p(b) \downarrow z}$ is equivalent to $B(p x, b) \downarrow z$, where the functor $B(p x, b) \to E$ sends a map $f \co p x \to b$ to $f_! x$.
The claim follows from taking fibres over $B(p x,b)$ and using that $x \to f_! x$ is cocartesian.
\end{proof}

\begin{lemma}\label{invertible-transport}
Consider a cocartesian fibration $p \co E \to B$ with a morphism $f : B(b, c)$ such that the cocartesian fibre transport functor $f_! \co E_b \to E_c$ is inverible.
Then $f$ has all $p$-cartesian lift, and a lift of $f$ is $p$-cartesian if and only if it is $p$-cocartesian.
\end{lemma}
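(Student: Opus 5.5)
The plan is to first show that every $p$-cocartesian lift of $f$ is $p$-cartesian. Then we use existence of cocartesian lifts together with uniqueness of cartesian lifts to get the converse.

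Fix $x : E_b$ and let $u \co x \to f_! x$ be a cocartesian lift of $f$. We must show that for every $w : E$ the square
\[
\begin{tikzcd}
  E(w, x)
  \ar[r, "u \circ -"]
  \ar[d]
&
  E(w, f_! x)
  \ar[d]
\\
  B(p w, b)
  \ar[r, "f \circ -"']
&
  B(p w, c)
\end{tikzcd}
\]
is cartesian. It suffices to compare fibres over each $g : B(p w, b)$. Cocartesian transport gives $E(w, x)_g \simeq E_b(g_! w, x)$, since maps $w \to x$ over $g$ factor uniquely through the cocartesian lift $w \to g_! w$. Similarly $E(w, f_! x)_{f g} \simeq E_c((fg)_! w, f_! x)$.

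Using $(fg)_! \simeq f_! g_!$ (functoriality of transport from \cref{cocartesian-transport}), the induced map on fibres becomes the action of $f_!$ on morphisms,
\[
E_b(g_! w, x) \to E_c(f_! g_! w, f_! x).
\]
This is invertible because $f_!$ is an equivalence. The main care point is checking that this identification of the fibre map with $f_!$ on morphisms is correct. To do so, we use that $f_!$ sends a morphism $h \co g_! w \to x$ in $E_b$ to the unique map $f_! h$ making the square with the cocartesian lifts $g_! w \to f_! g_! w$ and $u$ commute. Composing with the cocartesian map $w \to g_! w$ then recovers $u \circ (h \circ \text{lift})$, as required. So $u$ is $p$-cartesian.

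This gives the first claim: every $y : E_c$ is of the form $f_! x$ with $x = f_!^{-1} y$ (by essential surjectivity), so $y$ has a cartesian lift of $f$ ending at it. For the converse, let $v \co x' \to y$ be any $p$-cartesian lift of $f$. Take $x = f_!^{-1} y$ with its cocartesian and hence cartesian lift $u \co x \to f_! x \simeq y$. Cartesian lifts with a given target are unique up to a unique isomorphism over $\id_b$, so $v \simeq u \circ e$ with $e$ an isomorphism in $E_b$. Hence $v$ is cocartesian, since cocartesian maps are closed under precomposition with isomorphisms. Together the two directions show that a lift of $f$ is cartesian if and only if it is cocartesian.
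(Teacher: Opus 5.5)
Your proposal is correct and follows essentially the same route as the paper: show that a cocartesian lift of $f$ is cartesian by identifying the map on fibres over each $g$ with the action of $f_!$ on $E_b(g_! w, x) \to E_c(f_! g_! w, f_! x)$, then use surjectivity of $f_!$ to get existence of cartesian lifts and uniqueness of cartesian lifts to get the converse. You also spell out two steps the paper leaves implicit: why the fibre map is the action of $f_!$, and why precomposing with an isomorphism in $E_b$ preserves cocartesianness.
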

\begin{proof}
Suppose $\overline{f} \co y \to z$ is a $p$-cocartesian lift of $f$; we claim that $\overline{f}$ is also $p$-cartesian.
Thus suppose $x : E$ lying over $a : B$.
We want to show that $E(x, y) \to E(x, z) \times_{B(a, c)} B(a, b)$ is invertible.
Let $g : B(a, b)$; we need to show that the map of spaces from the fibre of $E(x, y) \to B(a, b)$ over $g$ to the fibre of $E(x, z) \to B(a, c)$ over $f g$ is invertible.
The first space is equivalent to the mapping space $\fib_q(b)(g_! x, y)$ and the second is equivalent to $\fib_q(c)((f g)_! x, z) \simeq \fib_q(c)(f_! g_! x, f_! y)$.
The map is equivalent to the action of $f_!$, which is inverible by assumption.
Thus $\overline{f}$ is $p$-cartesian.

Since $f_!$ is surjective, every lift of $c$ is the target of some cocartesian lift of $f$.
This lift is then also cartesian, so $f$ has all cartesian lifts.
Uniqueness of cartesian lifts means that any cartesian lift is cocartesian.
\end{proof}

\section{Pulling back a reflective subcategory}

In this short section we describe a simple method, due to Kelly in the setting of ordinary categories~\cite{kelly80}, for pulling back a reflective subcategory $C \hookrightarrow A$ along a right adjoint $f^* \co B \to A$.
That is, we describe a formula for a left adjoint to the inclusion $C \times_A B \hookrightarrow B$.
Note that $C \times_A B$ is the full subcategory of $B$ spanned by objects $b$ such that $f^* b$ lies in the image of $C \hookrightarrow A$.

We are particularly interested in pulling back the reflective subcategory $\rfib(W) \hookrightarrow \arr(\rfib(W))$, spanned by isomorphisms, along a functor $m^* \co \rfib(C) \to \arr(\rfib(W))$; this will allow us to compute mapping spaces in localisations of $C$.

\begin{construction}[label=S-constr]
Consider categories $A, B, C$ with functors $f^* \co B \to A$ and $g^* \co C \to A$.
Suppose that $f^*$ and $g^*$ have respective left adjoints $f_!$ and $g_!$, and that $B$ has pushouts.
We obtain an endofunctor $S \co B \to B$ with a pointing $s \co \id_B \to S$ by the following pushout square:
\[
\begin{tikzcd}
  f_! f^*
  \ar[r, "\varepsilon_f"]
  \ar[d, "f_! \eta_g f^*"']
&
  \id_B
  \ar[d]
\\
  f_! g^* g_! f^*
  \ar[r]
&
  \ar[ul, phantom, "\ulcorner" very near start]
  S \rlap{.}
\end{tikzcd}
\]
\end{construction}

Note that the above square transposes under the adjunction $f_! \dashv f^*$ to the square
\begin{equation}\label{tr-square}
\begin{tikzcd}
  f^*
  \ar[r, "\id"]
  \ar[d, "\eta_g f^*"']
&
  f^*
  \ar[d, "f^* s"]
\\
  g^* g_! f^*
  \ar[r]
&
  f^* S \rlap{.}
\end{tikzcd}
\end{equation}

\begin{lemma}[label=s-ortho]
Consider a cospan $B \xrightarrow{f^*} A \xleftarrow{g^*} C$ as in \cref{S-constr}.
Given $x, y : B$ with $f^* y$ lying in $C$, the map $s_x \co x \to S x$ is orthogonal against $y$, in the sense that the map $B(Sx, y) \to B(x, y)$ of spaces is invertible.
\end{lemma}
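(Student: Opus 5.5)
Since $S x$ is defined as a pushout, the plan is to compute $B(Sx, y)$ by mapping the pushout square of \cref{S-constr} (evaluated at $x$) into $y$. This gives a cartesian square of spaces
\[
\begin{tikzcd}
  B(Sx, y)
  \ar[r]
  \ar[d]
&
  B(x, y)
  \ar[d, "- \circ \varepsilon_f"]
\\
  B(f_! g^* g_! f^* x, y)
  \ar[r]
&
  B(f_! f^* x, y) \rlap{,}
\end{tikzcd}
\]
where the top map is precomposition with $s_x$. Invertibility of a map is stable under base change, so it suffices to show that the bottom map, precomposition with $f_! \eta_g f^* x$, is invertible.

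To handle the bottom map, I will transpose along $f_! \dashv f^*$. Precomposition with $f_!(h)$ on $B(f_!(-), y)$ corresponds to precomposition with $h$ on $A(-, f^* y)$. So the bottom map becomes
\[
A(g^* g_! f^* x, f^* y) \to A(f^* x, f^* y),
\]
given by precomposition with the unit $\eta_g \co f^* x \to g^* g_! f^* x$.

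Next I use the hypothesis that $f^* y$ lies in $C$, meaning $f^* y \simeq g^* c$ for some $c : C$. Here $g^*$ is understood as the inclusion of a (reflective) full subcategory, which is the setting of this section, so $g^*$ is fully faithful. Precomposition with $\eta_g$ then factors as
\[
A(g^* g_! a, g^* c) \xleftarrow{\sim} C(g_! a, c) \xrightarrow{\sim} A(a, g^* c),
\]
with $a = f^* x$. The first map is the action of $g^*$ on morphisms, which is invertible by full faithfulness. The second is the adjunction equivalence, which is itself precomposition with $\eta_g$ after applying $g^*$. Hence the bottom map is invertible, and so is the top map.

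The main obstacle is naturality bookkeeping rather than any conceptual step. I need to check that the square obtained by mapping out of the pushout really has the transposed precomposition maps on its edges; the square \eqref{tr-square} records exactly this. I also need to make explicit that the construction uses $g^*$ fully faithful; if it were not, I would instead phrase the hypothesis as "$\eta_g$ is orthogonal to $f^* y$", and the argument would go through unchanged.
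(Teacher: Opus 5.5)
Your proposal is correct and follows the paper's proof. The paper also reduces along the cobase change to orthogonality of $f_! \eta_g f^* x$ against $y$, transposes across $f_! \dashv f^*$ to orthogonality of $\eta_g f^* x$ against $f^* y$, and concludes because $f^* y$ lies in $C$; your explicit use of full faithfulness of $g^*$ spells out the step the paper calls ``immediate''.
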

\begin{proof}
Since $s_x$ is a cobase change of $f_! \eta_g f^* x$, it suffices to show that $f_! \eta_g f^* x$ is orthogonal against $y$.
Transposing under the adjunction $f_! \dashv f^*$, it suffices to show that $\eta_g f^* x$ is orthogonal against $f^* y$.
This is immediate since $f^* y$ lies in $C$.
\end{proof}

\begin{construction}[label=Sinfty-constr]
Consider a cospan of categories $B \xrightarrow{f^*} A \xleftarrow{g^*} C$ as in \cref{S-constr}.
Suppose that $B$ has sequential colimits.
Define $S^\infty \co B \to B$ as the sequential colimit of the sequence
\[
\begin{tikzcd}
  \id_B
  \ar[r, "s"]
&
  S
  \ar[r, "s S"]
&
  S S
  \ar[r, "s S S"]
&
  \cdots \rlap{.}
\end{tikzcd}
\]
We consider $S^\infty$ to be pointed by the sequential composition $\id_B \to S^\infty$.
\end{construction}

\begin{lemma}[label=Sinfty-in-sub]
Consider a cospan of categories $B \xrightarrow{f^*} A \xleftarrow{g^*} C$ as in \cref{S-constr}.
Suppose that $B$ has sequential colimits, that $f^*$ preserves them, and that $C \hookrightarrow A$ is closed under sequential colimits.
Then $S^\infty$ takes values in $C \times_A B \hookrightarrow B$.
\end{lemma}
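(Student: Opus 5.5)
We need to show that $f^* S^\infty x$ lies in $C$ for every $x : B$. Since $f^*$ preserves sequential colimits, $f^* S^\infty x$ is the sequential colimit of
\[
f^* x \xrightarrow{f^* s_x} f^* S x \xrightarrow{f^* s_{Sx}} f^* S S x \to \cdots,
\]
and $C \hookrightarrow A$ is closed under sequential colimits. So it suffices to show that this sequence is cofinal with a sequence in $C$. The plan is to interleave it with the sequence of objects $g^* g_! f^* S^n x$, which lie in $C$, and then compare sequential colimits.

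The interleaving comes from the transposed square \eqref{tr-square}. For each $y : B$ it gives a factorisation of $f^* s_y \co f^* y \to f^* S y$ as
\[
f^* y \xrightarrow{\eta_g} g^* g_! f^* y \xrightarrow{r_y} f^* S y,
\]
where $r_y$ is the bottom map of \eqref{tr-square}. Apply this with $y = S^n x$, writing $\eta_n$ for $\eta_g$ at $f^* S^n x$ and $r_n$ for $r_{S^n x}$. This produces a zigzag-free sequence
\[
f^* x \xrightarrow{\eta_0} g^* g_! f^* x \xrightarrow{r_0} f^* S x \xrightarrow{\eta_1} g^* g_! f^* S x \xrightarrow{r_1} f^* SSx \to \cdots
\]
whose composite at each even stage is $f^* s$.

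This interleaved sequence has the same colimit as the original one, because the even terms form a cofinal subsequence. It also has the same colimit as its odd subsequence $g^* g_! f^* S^n x$, with transition maps $\eta_{n+1} \circ r_n$. Since $C$ is closed under sequential colimits in $A$, and the odd terms are of the form $g^*(-)$ and hence lie in $C$, the colimit $f^* S^\infty x$ lies in $C$.

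Here "lies in $C$" means "in the essential image of $g^*$". For this to make sense as used in \cref{s-ortho} and in the subcategory $C \times_A B$, $g^*$ should be fully faithful, so that $C \hookrightarrow A$ is a full (reflective) subcategory. This is the setting of the section, since the notation $C \hookrightarrow A$ in the statement presupposes it.

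The main obstacle is the cofinality step: checking that a sequence and its even or odd subsequence have the same colimit. This is a standard fact about sequential colimits in a wild setting, because the subsequence inclusion $\nat \to \nat$, $n \mapsto 2n$, is cofinal. It can be argued directly: cocones under the interleaved sequence are determined by their restriction to either subsequence, since each intermediate leg is forced by composition. If that argument is delicate, I would instead show that the canonical map between the two colimits has an inverse, constructed from the maps $r_n$ and $\eta_n$.
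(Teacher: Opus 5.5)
Your proposal is correct and is essentially the paper's proof. The paper also writes $f^* S^\infty b$ as the colimit of the sequence $f^* S^n b$, uses the transposed square \eqref{tr-square} to factor each transition map through an object $g^* g_! f^* S^n b$ of $C$, and concludes by closure of $C$ under sequential colimits. You additionally make explicit the interleaving and cofinality step, and the implicit assumption that $g^*$ is fully faithful, both of which the paper leaves to the reader.
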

\begin{proof}
Let $b : B$.
We have to show that $f^*S^\infty b$ lies in $C$.
Since $f^*$ preserves sequential colimits, this is the colimit of the sequence
\[
\begin{tikzcd}
  f^* b
  \ar[r, "f^* s_b"]
&
  f^* S b
  \ar[r, "f^* s_{S b}"]
&
  f^* S S b
  \ar[r, "f^* s_{S S b}"]
&
  \cdots \rlap{.}
\end{tikzcd}
\]
By the square~\eqref{tr-square}, each map in this sequence factors through an object of $C$.
The colimit is thus equivalently a sequential colimit of objects of $C$, so it lies in $C$ since $C$ is closed under sequential colimits.
\end{proof}

\begin{corollary}[label=Sinfty-reflector]
Consider a cospan of categories $B \xrightarrow{f^*} A \xleftarrow{g^*} C$ with adjoints $f_! \dashv f^*$ and $g_! \dashv g^*$.
Assume that $g^*$ is fully faithful, $B$ has pushouts and sequential colimits, $f^*$ preserves sequential colimits, and $C \hookrightarrow A$ is closed under sequential colimits.
Then the natural transformation $\id_B \to S^\infty$ from \cref{Sinfty-constr} exhibits $S^\infty$ as a reflector onto the subcategory $C \times_A B \hookrightarrow B$.
\end{corollary}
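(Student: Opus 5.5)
We have to show that for each $b : B$ the map $b \to S^\infty b$ has target in $C \times_A B$ and is orthogonal against every object of $C \times_A B$. Then precomposition gives $B(S^\infty b, y) \simeq B(b, y)$ for all $y$ in the full subcategory, which is exactly what it means for $\id_B \to S^\infty$ to exhibit $S^\infty$ as a reflector. The first half is \cref{Sinfty-in-sub}, whose hypotheses are among ours. Note that $C \hookrightarrow A$ being fully faithful (since $g^*$ is) is what makes $C \times_A B \hookrightarrow B$ a full subcategory, so being a reflector is a property of the pointed functor.

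For orthogonality, fix $y : B$ with $f^* y$ lying in $C$. By \cref{s-ortho}, each stage map $s_{S^n b} \co S^n b \to S^{n+1} b$ is orthogonal against $y$. In that lemma the orthogonality of $\eta_g f^* x$ against $f^* y$ uses that $f^* y$ lies in the image of the fully faithful $g^*$: maps $g^* g_! f^* x \to g^* c$ correspond, via full faithfulness and the adjunction, to maps $f^* x \to g^* c$. This is where full faithfulness of $g^*$ enters. Hence $B(S^{n+1} b, y) \to B(S^n b, y)$ is invertible for every $n$.

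Since $S^\infty b$ is the sequential colimit of the $S^n b$, the space $B(S^\infty b, y)$ is the limit of the tower $\cdots \to B(S^1 b, y) \to B(S^0 b, y)$. This tower consists of equivalences, so its limit is computed by its last term, i.e., the projection $B(S^\infty b, y) \to B(b, y)$ is invertible. That projection is precomposition with the sequential composite $b \to S^\infty b$, as required.

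The main subtle point is the identification of maps out of the sequential colimit with the limit of the tower of mapping spaces, together with the fact that a tower of equivalences has limit equivalent to its base. Both are immediate from the universal property of sequential colimits as initial cocones, applied to cocones with vertex $y$. Sequential colimits in $B$ are taken pointwise in the functor category, so $S^\infty b$ is indeed the colimit of the $S^n b$ in $B$. Finally, naturality in $b$ of these equivalences is automatic because the unit is the natural transformation $\id_B \to S^\infty$.
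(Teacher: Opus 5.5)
Your proposal is correct and follows the paper's proof, which just combines \cref{s-ortho} (each stage $s_{S^n b}$ is orthogonal against objects of $C \times_A B$) with \cref{Sinfty-in-sub} ($S^\infty b$ lands in $C \times_A B$). You also spell out two things the paper leaves implicit: the passage of orthogonality to the sequential colimit, and the point where full faithfulness of $g^*$ is used.
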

\begin{proof}
Combine \cref{s-ortho,Sinfty-in-sub}.
\end{proof}

\begin{example}[label=ex-localisation]
Let $C$ be a category and $m \co W \to \arr(C)$ an arbitrary functor.
Consider the pushout square of categories
\[
\begin{tikzcd}
  W \times \II
  \ar[r, "\fst"]
  \ar[d, "\overline{m}"']
&
  W
  \ar[d, "j"]
\\
  C
  \ar[r, "i"']
&
  {C[W^{-1}]} \rlap{.}
  \ar[ul, phantom, "\ulcorner" very near start]
\end{tikzcd}
\]
Then $i \co C \to C[W^{-1}]$ is the localisation of $C$ at the collection of morphisms $m w$ for $w : W$.

By descent for right fibrations, the induced square of categories
\[
\begin{tikzcd}
  \rfib(C[W^{-1}])
  \ar[r, hook, "i^*"]
  \ar[d, "j^*"']
&
  \rfib(C)
  \ar[d, "m^*"]
\\
  \rfib(W)
  \ar[r, hook, "\Delta"']
&
  \arr(\rfib(W))
\end{tikzcd}
\]
is cartesian.
The reflector of $\Delta$ is $\cod \co \arr(\rfib(W)) \to \rfib(W)$.
All the conditions of \cref{Sinfty-reflector} are satisfied, so we get a description of the reflector $i_! \co \rfib(C) \to \rfib(C[W^{-1}])$ as the sequential colimit of a sequence $\id \xrightarrow{s} S \xrightarrow{s S} S S \to \cdots$.

Writing $m$ as a natural transformation $k \to l$ of functors $k, l \co W \to C$, restriction $m^*$ corresponds to the map $l^* \to k^*$, and so we can use \cref{arr-left-adj} to obtain a more explicit description of $m_!$ and hence of the endofunctor $S$.
Namely, we have $m_! \Delta \simeq l_!$, and we have the following pushout squares
of endofunctors on $\rfib(C)$:
\begin{align*}
\begin{tikzcd}[ampersand replacement=\&]
  k_! l^*
  \ar[r]
  \ar[d]
\&
  k_! k^*
  \ar[d]
\\
  l_! l^*
  \ar[r]
\&
  m_! m^* \rlap{,}
  \ar[ul, phantom, "\ulcorner" very near start]
\end{tikzcd}
&&
\begin{tikzcd}[ampersand replacement=\&]
  m_! m^*
  \ar[r]
  \ar[d]
\&
  \id_{\rfib(C)}
  \ar[d]
\\
  l_! k^*
  \ar[r]
\&
  S \rlap{.}
  \ar[ul, phantom, "\ulcorner" very near start]
\end{tikzcd}
\end{align*}
Given a presheaf $P : \rfib(C)$, we think of $SP$ as the result of freely non-recursively adding \emph{inverses} to the transport maps $P(l w) \to P(k w)$ for $w : W$.
Indeed an object of $(l_! k^* P)(c)$ is, roughly speaking, given by an object $w : W$ together with a map $f \co c \to lw$ and an element $p : P(k w)$; we think of this as the ``inverse'' to $P(l w) \to P(k w)$ at $p$ followed by restriction along $f$.
The objects $k_! l^*$, $k_! k^*$, and $l_! l^*$ in the pushout square defining $m_! m^*$ describe ways in which one can build an object of $(l_! k^* P)(c)$ that ought to live already in $P(c)$, by some inverse law.

An object of $(S^n P)(c)$ thus corresponds, roughly speaking, to an object of $P$ at some point $x : C$, together with a zigzag connecting $x$ with $c$, containing at most $n$ `inverse' zags, all of the form $k w \to l w$.
\end{example}

In particular the example above gives a description of mapping spaces (\ie, representable presheaves) of the localisation $C[W^{-1}]$.
We are particularly interested in the \emph{functoriality} of this description.
To this end, we employ the organisational device of \emph{gluing}.

\section{Gluing a functor}

Given a functor $p \co A \to B$, the \emph{gluing} $\gl(p)$ is the comma category $B \downarrow p$.%
\footnote{This construction is also known as \emph{Artin gluing}.}
In other words, $\gl(p)$ is cofreely generated by functors $\pi_1 \co \gl(p) \to B$ and $\pi_2 \co \gl(p) \to A$ with a natural transformation $\alpha \co \pi_1 \to p \pi_2$.
In this section, we recall some basic results on this construction.

\begin{lemma}[label=glue-colimit]
Let $p \co A \to B$ and suppose $A$ and $B$ have colimits of shape $J$.
Then so does $\gl(p)$, and $\pi_1$, $\pi_2$ preserve colimits of shape $J$.
\end{lemma}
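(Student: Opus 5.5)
The plan is to construct the colimit in $\gl(p)$ by hand and then check its universal property by reducing mapping spaces to those of $A$ and $B$. Since $\gl(p) = B \downarrow p$ is the pullback of $(\dom,\cod) \co \arr(B) \to B \times B$ along $\id \times p$, a diagram $D \co J \to \gl(p)$ amounts to diagrams $D_1 \co J \to B$ and $D_2 \co J \to A$ together with a natural transformation $\alpha \co D_1 \to p D_2$. Set $a = \colim_J D_2$ in $A$ and $b = \colim_J D_1$ in $B$. The composite of $\alpha$ with the cocone $p D_2 \to \Delta p(a)$ is a cocone $D_1 \to \Delta p(a)$, and so it induces a morphism $\beta \co b \to p(a)$. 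The candidate colimit is $(b, a, \beta)$. It carries a cocone from $D$, whose components combine the colimit cocones of $D_1$ and $D_2$; the compatibility with $\alpha$ and $\beta$ holds by construction of $\beta$. Note that this step does not need $p$ to preserve colimits.

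To verify the universal property, fix a test object $(b', a', \beta')$ of $\gl(p)$. The mapping spaces of the comma category are given by the pullback formula
\[
  \gl(p)\bigl((b,a,\beta),(b',a',\beta')\bigr) \simeq B(b,b') \times_{B(b, p a')} A(a,a'),
\]
where the two maps into $B(b, p a')$ are postcomposition with $\beta'$ and $u \mapsto p(u) \circ \beta$. This formula is obtained by computing mapping spaces in the pullback defining $\gl(p)$, using that mapping spaces in $\arr(B)$ are themselves pullbacks. The same formula holds with $(b,a,\beta)$ replaced by each $D(j)$. The mapping space out of the diagram, namely the space of cocones, is then the limit over $J$ of these pullbacks, and limits commute with limits, so it equals
\[
  \lim_j B(D_1 j, b') \times_{\lim_j B(D_1 j, p a')} \lim_j A(D_2 j, a').
\]
Since $b$ and $a$ are colimits, the outer factors are identified with $B(b,b')$ and $A(a,a')$, and the middle factor is identified with $B(b, pa')$. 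It then remains to check that the two maps into $B(b,pa')$ agree with the ones induced by $\beta'$ and by $\beta$. For the second map this is exactly the defining property of $\beta$: the composite $p(u) \circ \beta$ restricts along the colimit cocone to $p(u) \circ \alpha$. With this, the comparison map is an equivalence. This shows that $(b,a,\beta)$ is a colimit, and by construction $\pi_1$ and $\pi_2$ send it to the colimits $b$ and $a$, so they preserve colimits of shape $J$.

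I expect the main obstacle to be coherence rather than any conceptual point. The cocone and the identification of the two legs must be made natural in the test object, and one has to be sure that the pullback formula for comma mapping spaces is available in the synthetic setting. If the formula is awkward to justify, I would instead argue $2$-categorically. In that approach, a cocone under $D$ with vertex $(b',a',\beta')$ is a pair of cocones together with a compatibility of transformations, and this can be phrased via $\fun(J,-)$ applied to the pullback square defining $\gl(p)$. One would then use that $\fun(J,-)$ preserves pullbacks, together with the $2$-categorical universal property of pullbacks recalled in the preliminaries.
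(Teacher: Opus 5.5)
Your proposal is correct and is the direct verification the paper leaves implicit (its proof reads only ``Direct''): the colimit is $(\colim_J D_1, \colim_J D_2, \beta)$ with $\beta$ induced by $\alpha$, and the universal property follows from the pullback formula for mapping spaces in $\gl(p)$ together with commuting the limit over $J$ past that pullback. Your fallback $2$-categorical route can be made precise with the paper's next result, \cref{glue-adj}, which sidesteps the coherence bookkeeping. Since $\fun(J,\gl(p)) \simeq \gl(\fun(J,p))$ and the diagonals $A \to \fun(J,A)$, $B \to \fun(J,B)$ commute with $p$ and $\fun(J,p)$ and have left adjoints $\colim_J$, that lemma gives a left adjoint to the diagonal of $\gl(p)$, which $\pi_1$ and $\pi_2$ carry to $\colim_J$.
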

\begin{proof}
Direct.
\end{proof}

\begin{construction}
Gluing is functorial in lax squares: given a lax square of categories
\[
\begin{tikzcd}
  A
  \ar[r]
  \ar[d, "p"]'
  \twocellflip{dr}
&
  C
  \ar[d, "q"]
\\
  B
  \ar[r]
&
  D \rlap{,}
\end{tikzcd}
\]
we obtain a functor $\gl(p) \to \gl(q)$ which classifies the pasting of the lax square with the lax triangle $\alpha \co \pi_1 \to p \pi_2$.
\end{construction}

\begin{lemma}[label=glue-adj]
Consider a commutative square of categories
\[
\begin{tikzcd}
  A
  \ar[r, "u"]
  \ar[d, "q"]
&
  C
  \ar[d, "p"']
\\
  B
  \ar[r, "v"']
&
  D
\end{tikzcd}
\]
with left adjoints $f \dashv u$ and $g \dashv v$.
The induced functor $U \co \gl(p) \to \gl(q)$ has a left adjoint $F \co \gl(q) \to \gl(p)$, which is induced by the mate of the square above.
\end{lemma}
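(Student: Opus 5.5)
We set up the data explicitly and then verify the adjunction by computing mapping spaces, using the universal property of $\gl$ as a comma category. Objects of $\gl(p)$ are triples $(d, c, \delta \co d \to p c)$ with $d : D$ and $c : C$; objects of $\gl(q)$ are triples $(b, a, \beta \co b \to q a)$. The functor $U$ sends $(d, c, \delta)$ to $(v d, u c, v\delta)$, using the identification $v p \simeq q u$. The mate of the square is the transformation
\[
\mu \co g q \to g q u f \simeq g v p f \to p f,
\]
built from the unit of $f \dashv u$ and the counit of $g \dashv v$. We define $F$ by sending $(b, a, \beta)$ to $(g b, f a, \mu_a \circ g\beta)$. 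Functorially, this is the functor induced by the lax square with horizontal functors $f$ and $g$ and 2-cell $\mu$, via functoriality of gluing in lax squares.

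To show $F \dashv U$, we first describe mapping spaces in a comma category. Since $\gl(p) = D \downarrow p$ is a pullback of $\arr(D) \to D \times D$, the space $\gl(p)((d,c,\delta),(d',c',\delta'))$ is the pullback
\[
D(d, d') \times_{D(d, p c')} C(c, c'),
\]
where the two maps send $x \mapsto \delta' x$ and $y \mapsto (p y)\delta$. This holds because mapping spaces of $\arr(D)$ are pullbacks of this form, which we get by applying $\map(\II,-)$ to the defining pullback and taking fibres. The same description applies to $\gl(q)$.

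Now fix $(b, a, \beta)$ and $(d, c, \delta)$. The space $\gl(p)(F(b,a,\beta),(d,c,\delta))$ is
\[
D(g b, d) \times_{D(g b, p c)} C(f a, c).
\]
Transposing each factor under $g \dashv v$ and $f \dashv u$ gives
\[
B(b, v d) \times_{B(b, v p c)} A(a, u c),
\]
which is $\gl(q)((b,a,\beta), U(d,c,\delta))$, provided the two structure maps into $D(g b, p c)$ transpose to the structure maps into $B(b, v p c) \simeq B(b, q u c)$. For the first leg, postcomposition with $\delta$ transposes to postcomposition with $v\delta$ by naturality of adjunction bijections. For the second leg, $y \mapsto (p y) \circ \mu_a \circ g\beta$ transposes to $y' \mapsto (q y')\circ\beta$ where $y' \co a \to u c$ is the transpose of $y$; we verify this using the triangle identities and the definition of $\mu$.

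The main obstacle is this second compatibility together with naturality in both variables. It amounts to the standard fact that mates correspond under transposition: the transpose of $p(y)\circ\mu_a$ along $g \dashv v$ is $v p(y)$ composed with the unit, which equals $q(y')$ under $vp \simeq qu$. Since fully coherent naturality is awkward to write out, we would instead produce a unit $\eta \co \id \to U F$ componentwise from the units of $f\dashv u$ and $g\dashv v$, which is a morphism of $\gl(q)$ by the mate identity. We then show that each component is initial in $(b,a,\beta)\downarrow U$, using the pullback computation above, and invoke the unit criterion for adjunctions from the preliminaries.
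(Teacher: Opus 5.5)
Your proposal is correct and follows essentially the same route as the paper: it writes morphisms in the two comma categories as commuting squares, transposes both legs across $g \dashv v$ and $f \dashv u$, and uses the mate to match the structure maps, and your unit-criterion step just spells out the naturality that the paper dismisses as ``naturally equivalent''. One caveat: your formulas $(vd, uc, v\delta)$ and $vp \simeq qu$ typecheck only with $u \co C \to A$ and $v \co D \to B$, opposite to the displayed square, but this harmlessly repairs an orientation slip in the statement, which the paper's own proof fixes instead by swapping the roles of $p$ and $q$.
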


In particular, the commutative squares
\begin{align*}
\begin{tikzcd}[ampersand replacement=\&]
  \gl(p)
  \ar[r, "\pi_1"]
  \ar[d, "U"']
\&
  B
  \ar[d, "v"]
\\
  \gl(q)
  \ar[r, "\pi_1"']
\&
  D \rlap{,}
\end{tikzcd}
&&
\begin{tikzcd}[ampersand replacement=\&]
  \gl(p)
  \ar[r, "\pi_2"]
  \ar[d, "U"']
\&
  A
  \ar[d, "u"]
\\
  \gl(q)
  \ar[r, "\pi_2"']
\&
  C \rlap{.}
\end{tikzcd}
\end{align*}
have invertible mates, \ie, the mates $g \pi_1 \to \pi_1 F$ and $f \pi_2 \to \pi_2 F$ are invertible.

\begin{proof}
Consider an object $x \co b \to p a$ of $\gl(p)$ and an object $y \co d \to q c$ of $\gl(q)$.
Maps $y \to U x$ are given by the dashed data in the square
\[
\begin{tikzcd}
  d
  \ar[rr]
  \ar[d, dashed]
&&
  q c
  \ar[d, dashed, "q(?)"]
\\
  v b
  \ar[r]
&
  v p a
  \ar[r, "\sim"]
&
  q u a \rlap{.}
\end{tikzcd}
\]
Maps $F y \to x$ are given by the dashed data in the square
\[
\begin{tikzcd}
  g d
  \ar[r]
  \ar[d]
&
  g q c
  \ar[r]
&
  p g c
  \ar[d, "p(?)"]
\\
  b
  \ar[rr]
&&
  p a \rlap{.}
\end{tikzcd}
\]
These are naturally equivalent.
\end{proof}

\subsection{Cartesian maps}

Given a functor $p \co A \to B$, we may understand the projection $\pi_2 \co \gl(p) \to A$ via the restriction of cocartesian fibrations
\[
\begin{tikzcd}
  \gl(p)
  \ar[r]
  \ar[d, "\pi_2"']
  \ar[dr, phantom, "\lrcorner" very near start]
&
  \arr(B)
  \ar[d, "\cod"]
\\
  A
  \ar[r, "p"']
&
  B \rlap{.}
\end{tikzcd}
\]
A morphism $x \to Y$ of $\arr(B)$ is said to be \emph{cartesian} if it is $\pi_2$-cartesian.
Equivalently, its image in $\arr(B)$ is cartesian, meaning that the following square in $B$ is cartesian:
\[
\begin{tikzcd}
  \pi_1 x
  \ar[r]
  \ar[d, "\alpha_x"']
&
  \pi_1 y
  \ar[d, "\alpha_y"]
\\
  q \pi_2 x
  \ar[r]
&
  q \pi_2 y \rlap{.}
\end{tikzcd}
\]

\begin{lemma}[label=big-list-lemma]\label{big-list}
Given a functor $p \co A \to B$ be a functor, the collection of cartesian maps of $\gl(p)$ the following closure properties.
\begin{parts}
\item\label{big-list:cart-comp}
Any isomorphism is cartesian.
Given maps $f \co x \to y$ and $g \co y \to z$ in $\gl(p)$ with $g$ cartesian, $f$ is cartesian if and only if $gf$ is cartesian.
\item\label{big-list:cart-func}
Given a commutative square $v p \simeq q u$ where $v$ preserves pullbacks, the induced functor $\gl(p) \to \gl(q)$ preserves cartesian maps.
\item\label{big-list:cart-cobase-change}
Suppose $A$ and $B$ have pushouts, that $p$ preserves them, and that pushouts in $B$ satisfy descent.
Then for any span $y \xleftarrow{f} x \xrightarrow{g}z$ in $\gl(p)$ with $f$ and $g$ cartesian, the maps $y \to y \sqcup^x z$, $z \to y \sqcup^x z$ are also cartesian.
\item\label{big-list:cart-cogap}
Suppose $A$ and $B$ have pushouts, that $p$ preserves them, and that pushouts in $B$ are stable under base change.
Then for any square in $\gl(p)$ with all four maps cartesian, the cogap map is also cartesian.
\item\label{big-list:cart-pushout}
Suppose $A$ and $B$ have pushouts, that $p$ preserves them, and that pushouts in $B$ satisfy descent.
Then for any map of spans in $\gl(p)$ with all seven maps involved cartesian, the induced map on pushouts is also cartesian.
\item\label{big-list:cart-transfinite}
Suppose $A$ and $B$ have sequential colimits, that $p$ preserves them, and that sequential colimits in $B$ satisfy descent.
Then cartesian maps in $\gl(p)$ are closed under transfinite composition.
\item\label{big-list:cart-sequential}
Suppose $A$ and $B$ have sequential colimits, that $p$ preserves them, and that sequential colimits commute with pullback in $B$.
Given a sequence of maps in $\gl(p)$
\[
\begin{tikzcd}
  x_0
  \ar[r]
  \ar[d, "f_0"']
&
  x_1
  \ar[r]
  \ar[d, "f_1"']
&
  \cdots
&
  x_\infty
  \ar[d, "f_\infty"]
\\
  y_0
  \ar[r]
&
  y_1
  \ar[r]
&
  \cdots
&
  y_\infty \rlap{,}
\end{tikzcd}
\]
if the vertical maps $f_n$ are all cartesian, so is the induced map $f_\infty$ on sequential colimits.
\end{parts}
\end{lemma}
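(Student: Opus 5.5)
The plan is to reduce every part to a statement about cartesian squares in $B$. By the description above, a map $x \to y$ of $\gl(p)$ is cartesian exactly when the square formed by $\pi_1 x \to \pi_1 y$, the structure maps $\alpha_x, \alpha_y$, and $p\pi_2 x \to p\pi_2 y$ is cartesian in $B$. By \cref{glue-colimit}, colimits in $\gl(p)$ are computed componentwise. Since $p$ preserves the relevant colimits, the bottom row of the resulting squares is then the colimit in $B$ of the $p$-images. So in each part the problem becomes: given a diagram of cartesian squares in $B$, show that some induced square is cartesian.

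For \cref{big-list:cart-comp}, a square built from an isomorphism is trivially cartesian, and the second claim is the pasting lemma for pullbacks. For \cref{big-list:cart-func}, apply $v$ to the cartesian square of $p$-data. The mate condition $vp \simeq qu$ identifies the result with the square defining the image map in $\gl(q)$, and $v$ preserves pullbacks.

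For \cref{big-list:cart-cobase-change}, consider the span of arrows $\pi_1 y \leftarrow \pi_1 x \to \pi_1 z$ over $p\pi_2 y \leftarrow p\pi_2 x \to p\pi_2 z$. By hypothesis this is a cartesian transformation of spans in $B$. Descent for pushouts (the unit half) says that the map from each leg into the pushout is then cartesian. Since $p$ preserves pushouts, the pushout of the bottom span is $p$ applied to the pushout in $A$, which gives the claim.

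For \cref{big-list:cart-cogap}, take a square $x \to y, z \to w$ with all four maps cartesian. The cogap map $\pi_1 y \sqcup^{\pi_1 x} \pi_1 z \to \pi_1 w$ should be compared with the base change of $\pi_1 w$ along $p(\pi_2 y \sqcup^{\pi_2 x} \pi_2 z) \to p\pi_2 w$. Universality of pushouts says this base change is the pushout of the base changes of $\pi_1 w$ to $p\pi_2 y$, $p\pi_2 x$ and $p\pi_2 z$. By cartesianness of the four maps and pasting, these base changes are $\pi_1 y$, $\pi_1 x$ and $\pi_1 z$, so the square for the cogap map is cartesian.

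\Cref{big-list:cart-pushout} then follows formally by combining the earlier parts. The legs of both spans are cartesian, so by \cref{big-list:cart-cobase-change} the coprojections into both pushouts are cartesian. Composing the coprojections of the source with the comparison maps on the vertices shows that the induced map on pushouts forms a square, all four of whose other maps are cartesian by \cref{big-list:cart-comp}. Pasting with the (cartesian) coprojection squares of the target, or arguing directly via universality as in \cref{big-list:cart-cogap}, gives the claim; left cancellation in \cref{big-list:cart-comp} lets us conclude cartesianness of the induced map.

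\Cref{big-list:cart-transfinite} is the sequential analogue of cobase change. For a sequence with cartesian transition maps, descent for sequential colimits gives that each coprojection to the colimit is cartesian; in particular the composite from stage $0$ is. \Cref{big-list:cart-sequential} uses only that sequential colimits commute with pullback. Write $\pi_1 x_n \simeq \pi_1 y_n \times_{p\pi_2 y_n} p\pi_2 x_n$ and pass to the colimit: the colimit of these pullbacks is the pullback of the colimits, and $p$ commutes with the colimits.

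I expect the main obstacle to be \cref{big-list:cart-pushout}. There the exact hypothesis (descent rather than mere universality) must be used carefully, and one has to check that the various comparison squares paste correctly. The other parts are short once the reduction to cartesian squares in $B$ is in place.
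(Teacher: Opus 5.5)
Your proposal is correct and follows the paper's proof: you reduce each part to cartesian squares in $B$ (equivalently, work in $\arr(B)$, using that $p$ preserves the relevant colimits), use pullback pasting for part (1), preservation of pullbacks by $v$ for part (2), descent or universality in $B$ for parts (3), (4), (6), (7), and deduce part (5) formally from the others. The one muddled point is part (5). The clean argument, which is the paper's, is that $b \sqcup^a c \to y \sqcup^x z$ is the cogap map of the square formed by $a \to b$, $a \to c$, $b \to y \to y \sqcup^x z$ and $c \to z \to y \sqcup^x z$; these are all cartesian by parts (3) and (1), so part (4) applies directly. The ``left cancellation'' you invoke cannot deliver the induced map, since part (1) deduces $f$ from $g$ and $gf$, never $g$ from $f$ and $gf$. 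Descent enters only through part (3), so part (5) is not really an obstacle.
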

\begin{proof}\leavevmode
\begin{parts}
\item
This is pasting of cartesian morphisms for the functor $\pi_2$.
\item
Say $q \co C \to D$.
In the square of categories
\[
\begin{tikzcd}
  \gl(p)
  \ar[r]
  \ar[d]
&
  \arr(B)
  \ar[d, "\arr(v)"]
\\
  \gl(q)
  \ar[r]
&
  \arr(D) \rlap{,}
\end{tikzcd}
\]
the horizontal functors create cartesian morphisms.
By assumption, the right functor preserves cartesian morphisms, hence so does the left functor.
\item
Since $p$ preserves pushouts, we may as well work in $\arr(B)$ rather than $\gl(p)$.
Here, the statement follows directly from descent.
\item
Same as above.
\item
Consider a diagram in $\gl(p)$
\[
\begin{tikzcd}
  b
  \ar[d]
&
  a
  \ar[l]
  \ar[r]
  \ar[d]
&
  c
  \ar[d]
\\
  y
&
  x
  \ar[l]
  \ar[r]
&
  z
\end{tikzcd}
\]
with all maps cartesian.
By \cref{big-list:cart-cobase-change}, the map $y \to y \sqcup^x z$ is cartesian, and so by \cref{big-list:cart-comp}, so is $b \to y \sqcup^x z$, and by symmetry so is $c \to y \sqcup^x z$.
We are now done by \cref{big-list:cart-cogap}.
\item
As above, this reduces to the claim in $\arr(B)$, where it is simply  the assumption that sequential colimits in $B$ satisfy descent.
\item
This reduces to the claim in $\arr(B)$, where it is simply the assumption that sequential colimits commute with pullback.
\end{parts}
\end{proof}

\subsection{Pulling back and gluing reflective subcategories}

Consider a map of cospans of categories,
\begin{equation}\label{map-of-inputs}
\begin{tikzcd}
  B_u
  \ar[r, "f_u^*"]
&
  A_u
&
  C_u
  \ar[l, hook', "g_u^*"']
\\
  B_d
  \ar[r, "f_d^*"']
  \ar[u, "q"']
&
  A_d
  \ar[u, "p"]
&
  C_d
  \ar[l, hook', "g_d^*" ]
  \ar[u, "r"']
\end{tikzcd}
\end{equation}
where each cospan (row) satisfies the assumptions of \cref{Sinfty-reflector}.
By \cref{glue-colimit,glue-adj}, we obtain a cospan
\[
\begin{tikzcd}
  \gl(q)
  \ar[r, "f^*"]
&
  \gl(p)
&
  \gl(r)
  \ar[l, hook', "g^*"']
\end{tikzcd}
\]
which again satisfies the assumptions of \cref{Sinfty-reflector}.

Denote the pullbacks $B_u \times_{A_u} C_u$ and $B_d \times_{A_d} C_d$ by $D_u$ and $D_d$, respectively, and the induced functor $D_d \to D_u$ by $t$.
Note that $\gl(t)$ --- the full subcategory of $\gl(q)$ spanned by objects whose components lie in $D_u$ and $D_d$ --- is equivalently the pullback $\gl(q) \times_{\gl(p)} \gl(r)$.
Moreover, the reflector $S^\infty \co \gl(q) \to \gl(t)$ is computed componentwise, by \cref{glue-adj}.

We would like to know when it happens that, for an object $x : \gl(q)$, the unit map $x \to S^\infty x$ is cartesian.
Since this decomposes as a transfinite composition
\[
\begin{tikzcd}
  x
  \ar[r]
&
  S x
  \ar[r]
&
  S S x
  \ar[r]
&
  \cdots \rlap{,}
\end{tikzcd}
\]
it is natural to look for conditions that ensure that each map in this sequence is cartesian.
By definition, $s_x \co x \to S x$ is the cobase change of $f_! \eta_g f^* x \co f_! f^* x \to f_! g^* g_! f^* x$ along $\varepsilon_f x \co f_! f^* x \to x$.
Say that $x : \gl(q)$ is \emph{good} if $\eta_g f^* x$ and $\varepsilon_f x$ are both cartesian.
We are thus led to ask if it happens that $S x$ is good as soon as $x$ is good.

\begin{lemma}[label=S-good]
Suppose we are given a diagram of categories as in~\eqref{map-of-inputs},
where both cospans satisfy the assumptions of \cref{Sinfty-reflector}.
Suppose moreover that:
\begin{conditions}
\item \label{S-good:descent}
Sequential colimits and pushouts in $B_u$ and $A_u$ satisfy descent.
\item \label{S-good:preserve-colimits}
The functors $p$, $q$, $f^*$, $g^*$ preserve sequential colimits and pushouts.%
\footnote{Note that if $f_u^*$, $f_d^*$, $g_u^*$, $g_d^*$ all preserve colimits, then so do $f^*$ and $g^*$.}
\item \label{S-good:g-cart}
$g_! \co \gl(p) \to \gl(r)$ preserves cartesian maps.
\item\label{S-good:f-cart}
$f_! \co \gl(p) \to \gl(q)$ preserves every cartesian map $x \to y$ such that $\eta_g y \co y \to g^* g_! y$ is cartesian.
\item\label{S-good:c-good}
For $c : C$, the object $f_! g^* c$ of $\gl(q)$ is good.
\end{conditions}
Then for any $x : \gl(q)$ which is good, we have that $Sx$ is also good, and the unit map $x \to S^\infty x$ is cartesian.
\end{lemma}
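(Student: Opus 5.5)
The plan is to prove the two claims in turn. First, for good $x$ the map $s_x \co x \to Sx$ is cartesian. Second, $Sx$ is again good. Then induction makes every map $S^n s_x$ in the sequence of \cref{Sinfty-constr} cartesian. Since $\gl(q)$ is glued from $q \co B_d \to B_u$ and sequential colimits in $B_u$ satisfy descent, \cref{big-list:cart-transfinite} then shows that the transfinite composite $x \to S^\infty x$ is cartesian. Throughout I will use three facts. By \cref{big-list:cart-func}, $f^*$ and $g^*$ preserve cartesian maps, since they are glued from right adjoints, which preserve pullbacks. By condition \cref{S-good:g-cart}, $g^* g_!$ preserves cartesian maps. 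Since $g^*$ is fully faithful, $\eta_g$ is invertible on objects of the form $g^* c$.

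For the first claim, recall that $s_x$ is the cobase change of $f_! \eta_g f^* x$ along $\varepsilon_f x$. The map $\eta_g f^* x$ is cartesian because $x$ is good. Its target is $y = g^* g_! f^* x$, and $\eta_g y$ is invertible, so condition \cref{S-good:f-cart} makes $f_! \eta_g f^* x$ cartesian. The map $\varepsilon_f x$ is cartesian by goodness. So \cref{big-list:cart-cobase-change} applies in $\gl(q)$, using descent for pushouts in $B_u$, and shows that $s_x$ is cartesian.

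For goodness of $Sx$, write $c = g_! f^* x$. I would show that $\varepsilon_f Sx$ is cartesian by means of \cref{big-list:cart-pushout}. The functor $f_! f^*$ preserves pushouts, because $f_!$ is a left adjoint and $f^*$ preserves pushouts by \cref{S-good:preserve-colimits}. Hence $\varepsilon_f Sx$ is the map induced on pushouts by the map of spans $\varepsilon_f$ from $f_! f^*(x \leftarrow f_! f^* x \to f_! g^* c)$ to $(x \leftarrow f_! f^* x \to f_! g^* c)$. Of the seven maps involved, the legs of the target span are $\varepsilon_f x$ and $f_! \eta_g f^* x$, which were shown to be cartesian above. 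The vertical map $\varepsilon_f x$ is cartesian by goodness, and $\varepsilon_f f_! g^* c$ is cartesian by condition \cref{S-good:c-good}. The leg $f_! f^* \varepsilon_f x$ is cartesian by \cref{S-good:f-cart}, whose hypothesis is met because $\eta_g f^* x$ is cartesian. The leg $f_! f^* f_! \eta_g f^* x$ is cartesian by \cref{S-good:f-cart}, whose hypothesis is met because $\eta_g f^* f_! g^* c$ is cartesian by goodness of $f_! g^* c$. For the remaining vertical map $\varepsilon_f f_! f^* x$, use the naturality identity $\varepsilon_f x \circ \varepsilon_f f_! f^* x \simeq \varepsilon_f x \circ f_! f^* \varepsilon_f x$ together with the cancellation of \cref{big-list:cart-comp}.

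Similarly, I would show that $\eta_g f^* Sx$ is cartesian by \cref{big-list:cart-pushout} in $\gl(p)$, using descent in $A_u$. Both $f^*$ and $g^* g_!$ preserve pushouts, the latter by \cref{S-good:preserve-colimits}. So $\eta_g f^* Sx$ is induced by the map of spans $\eta_g$ from $f^*(x \leftarrow f_! f^* x \to f_! g^* c)$ to its image under $g^* g_!$. The legs of both spans are cartesian, because $f^*$ and $g^* g_!$ preserve the cartesian legs found above. The vertical maps $\eta_g f^* x$ and $\eta_g f^* f_! g^* c$ are cartesian by goodness of $x$ and of $f_! g^* c$ respectively. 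The middle vertical map $\eta_g f^* f_! f^* x$ is handled by cancellation. Naturality of $\eta_g$ gives $g^* g_! f^* \varepsilon_f x \circ \eta_g f^* f_! f^* x \simeq \eta_g f^* x \circ f^* \varepsilon_f x$. The right-hand side is cartesian, and $g^* g_! f^* \varepsilon_f x$ is cartesian, so \cref{big-list:cart-comp} yields the claim.

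I expect the main obstacle to be exactly these two middle vertical components, $\varepsilon_f f_! f^* x$ and $\eta_g f^* f_! f^* x$. They are not covered directly by any hypothesis, so they must be extracted by naturality squares and the cancellation property of \cref{big-list:cart-comp}. That is also where the order of verification matters: the first claim (that $f_! \eta_g f^* x$ is cartesian) must be established before either middle component can be handled.
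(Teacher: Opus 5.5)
Your proposal is correct and is essentially the paper's proof. As in the paper, $s_x$ is cartesian as the cobase change of the cartesian maps $\varepsilon_f x$ and $f_! \eta_g f^* x$, goodness of $Sx$ comes from the map-of-spans closure property applied to $\varepsilon_f$ and $\eta_g f^*$ (the middle vertical components follow by naturality and cancellation, which are the paper's ``unlabelled maps''), and transfinite closure finishes. The only slip is notational: the maps in the sequence defining $S^\infty x$ are $s_{S^n x}$, not $S^n s_x$, and $s_{S^n x}$ is exactly what your goodness induction shows to be cartesian.
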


Explicitly, given objects $x_u : B_u$ and $x_d : B_d$ with a map $m \co x_u \to p x_d$, determining a good object $x : \gl(q)$, the statement that the unit map $x \to S^\infty x$ is cartesian means that
the square in $B_u$
\[
\begin{tikzcd}
  x_u
  \ar[r, "\eta"]
  \ar[d, "m"']
  \ar[dr, phantom, "\lrcorner" very near start]
&
  h_{u!} x_u
  \ar[d]
\\
  q x_d
  \ar[r, "q \eta"']
&
  q h_{d!} x_d
\end{tikzcd}
\]
is cartesian where $h_{u!}$ and $h_{d!}$ denote the reflectors $B_u \to D_u$ and $B_d \to D_d$, respectively.

\begin{proof}
Suppose $x : \gl(q)$ is good; we want to show that $Sx$ is also good.
That is, we want to show that $\eta_g f^* S x$ and $\varepsilon_f S x$ are cartesian.
Since both the domains and codomains of $\eta_g f^*$ and $\varepsilon_f$ preserve pushouts, in particular the pushout defining $Sx$, these maps are induced by the following maps of spans and we are in a position to apply \cref{big-list:cart-pushout} of \cref{big-list}.
We have that all labelled maps in the diagrams
\[
\begin{tikzcd}[column sep=5em]
  f^* f_! g^* g_! f^*x
  \ar[d, "\eta_g f^* f_! g^* g_! f^* x"']
&
  f^* f_! f^*x
  \ar[d]
  \ar[l]
  \ar[r, "f^* \varepsilon_f x"]
&
  f^* x
  \ar[d, "\eta_g f^* x"]
\\
  g^* g_! f^* f_! g^* g_! f^* x
&
  g^* g_! f^* f_! f^* x
  \ar[l, "g^* g_! f^* f_! \eta_g f^* x"]
  \ar[r, "g^* g_! f^* \varepsilon_f x"']
&
  g^* g_! f^* x
\end{tikzcd}
\]
and
\[
\begin{tikzcd}[column sep=5em]
  f_! f^* f_! g^* g_! f^*x
  \ar[d, "\varepsilon_f f_! g^* g_! f^* x"']
&
  f_! f^* f_! f^*x
  \ar[d]
  \ar[l, "f_! f^* f_! \eta_g f^* x"']
  \ar[r, "f^* \varepsilon_f x"]
&
  f_! f^* x
  \ar[d, "\varepsilon_f x"]
\\
  f_! g^* g_! f^* x
&
  f_! f^* x
  \ar[l, "f_! \eta_g f^* x"]
  \ar[r, "\varepsilon_f x"']
&
  g^* g_! f^* x
\end{tikzcd}
\]
are cartesian; it then follows by \cref{big-list:cart-comp} of \cref{big-list} that the unlabelled maps are cartesian, so that $Sx$ is good.

By assumption, $x$ is good, \ie, $\varepsilon_f x$ and $\eta_g f^* x$ are cartesian.
We have that $f^*$ and $g^*$ preserve cartesian maps by \cref{big-list:cart-func} of \cref{big-list}, and $g_!$ preserves them by assumption, so $f^* \varepsilon_f x$ and $g^* g_! f^* \varepsilon_f$ are cartesian.
To verify that $f_!$ preserves the cartesian map $\eta_g f^* x$, we apply \cref{S-good:f-cart}, noting that $\eta_g g^* g_! f^* x$ is invertible and so in particular cartesian.
We also want to know that $f_!$ preserves the cartesian map $f^* \varepsilon_f x$; indeed $\eta_g f^*x$ is cartesian by assumption.
The remaining two maps, $\eta_g f^*f_! g^* g_! f^*x$ and $\varepsilon_f f_! g^* g_! f^* x$, are both cartesian by \cref{S-good:c-good} with $c = g_! f^* x$.
This finishes the proof that if $x$ is good then so is $S x$.

If $x$ is good, then $s_x \co x \to S x$ is cartesian by \cref{big-list:cart-cobase-change} of \cref{big-list}.
Since $S x$ is also good, every map in the sequence $x \xrightarrow{s_x} S x \xrightarrow{s_{S x}} S S x \to \cdots$ defining $S^\infty x$ is good.
By \cref{big-list:cart-transfinite} of \cref{big-list}, the transfinite composition $x \to S^\infty x$ is also good, as needed.
\end{proof}

\section{Localising a cocartesian fibration}

Let $C$ be a category and $W$ a collection of morphisms in $C$.
In this section we prove descent for cocartesian fibrations along the localisation functor $i \co C \to C[W^{-1}]$.
Given a cocartesian fibration $q \co E \to C$, say that $q$ \emph{inverts} $W$ if for every morphism $f \co x \to y$ in $W$, the transport functor $f_! \co E_x \to E_y$ on fibres of $q$ is invertible.
Any cocartesian fibration over $C[W^{-1}]$ pulls back along $i$ to give a cocartesian fibration over $C$ that inverts $W$.
We would like to know that \emph{every} cocartesian fibration which inverts $W$ arises in this way.

Given a cocartesian fibration $q \co E \to C$ which inverts $W$, let $W_u$ be the collection of $q$-cocartesian lifts of morphisms in $W$.
The composite $E \to C \to C[W^{-1}]$ extends along $i_u \co E \to E[W_u^{-1}]$ to a functor $p \co E[W_u^{-1}] \to C[W^{-1}]$.
Most of this section is devoted to proving that $q'$ is a cocartesian fibration, and that $q$ is the pullback of $q'$. Both of these statements concern mapping spaces of the localisations $E[W_u^{-1}]$ and $C[W^{-1}]$.
We will see that they can be reduced to the following more concrete statements.
\begin{lemma}[label=mapping-spaces-cart]
Let $q \co E \to C$ be a cocartesian fibration which inverts $W$ and let $W_u$ be the collection of $q$-cocartesian lifts of morphisms in $W$.
\begin{parts}
\item\label{mapping-spaces-cart:arrow}
For objects $x, y : E$, the following square of spaces is cartesian:
\[
\begin{tikzcd}
  E(x, y)
  \ar[r]
  \ar[d]
  \ar[dr, phantom, "\lrcorner" very near start]
&
  E[W_u^{-1}](i_u x, i_u y)
  \ar[d]
\\
  C(q x, q y)
  \ar[r]
&
  C[W^{-1}](i q x, i q y) \rlap{.}
\end{tikzcd}
\]
\item\label{mapping-spaces-cart:zig-zag-start}
For objects $x, y : E$ and $c : C$, the following square is cartesian:
\[
\begin{tikzcd}
  \verts{x \downarrow E_c \downarrow i_u y}
  \ar[r]
  \ar[d]
  \ar[dr, phantom, "\lrcorner" very near start]
&
  E[W_u^{-1}](i_u x, i_u y)
  \ar[d]
\\
  C(q x, c) \times C[W^{-1}](i c, i q y)
  \ar[r]
&
  C[W^{-1}](i q x,i q y) \rlap{.}
\end{tikzcd}
\]
Here $x \downarrow E_c \downarrow i_u y$ denotes the iterated comma category whose objects consist of an object $z$ in the fibre of $p$ over $c$ together with maps $x \to z$ and $i_u z \to i_u y$.
\end{parts}
\end{lemma}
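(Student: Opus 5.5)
The plan is to obtain both parts from \cref{S-good}, applied to the gluing of the two localisation situations of \cref{ex-localisation}. The upper row of~\eqref{map-of-inputs} is the cospan
\[
\rfib(E) \xrightarrow{m_u^*} \arr(\rfib(W_u)) \hookleftarrow \rfib(W_u)
\]
for $E$ and $W_u$, and the lower row is the cospan
\[
\rfib(C) \xrightarrow{m^*} \arr(\rfib(W)) \hookleftarrow \rfib(W)
\]
for $C$ and $W$. The vertical functors are restriction along $q \co E \to C$ and along the induced functor $W_u \to W$, so $B_u = \rfib(E)$, $B_d = \rfib(C)$ and the middle vertical map is $q^*$. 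By \cref{ex-localisation} the reflectors $h_{u!}$ and $h_{d!}$ are $i_{u!}$ and $i_!$, followed by restriction back. Hence for the object $x$ of $\gl(q^*)$ given by
\[
E(-, y) \to q^* C(-, q y),
\]
the conclusion of \cref{S-good}, evaluated at $x : E$, is exactly the square of \cref{mapping-spaces-cart:arrow}. Here we use that $i_{u!}$ of a representable is the representable $E[W_u^{-1}](i_u -, i_u y)$, by the Yoneda section.

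The work is checking the hypotheses of \cref{S-good}.

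\begin{itemize}
\item \Cref{S-good:descent,S-good:preserve-colimits} hold because colimits in presheaf categories are computed pointwise in $\spc$ and satisfy descent, and all functors involved are restrictions.
\item \Cref{S-good:g-cart} concerns $\cod$ applied to arrows of $\gl(\rfib(W_u) \to \rfib(W))$. I would check it via \cref{big-list:cart-cogap,big-list:cart-comp} of \cref{big-list}.
\item \Cref{S-good:f-cart,S-good:c-good} involve the left Kan extensions $k_!, l_!$ along $W_u \to E$ and $W \to C$. Here I would use the comma formula of \cref{subsection-lke}: left Kan extension along $W_u \to E$ is computed over $e \downarrow W_u$. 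The key input is that $q$ is cocartesian and inverts $W$. By \cref{invertible-transport}, morphisms of $W_u$ are then also $q$-cartesian. This makes the comma categories $e \downarrow W_u$ project onto $qe \downarrow W$ with fibres that localise to points (coreflective subcategories of cartesian lifts). As a result the Kan extension squares are cartesian.
\item Finally, $x$ itself is good by the same computation, using that representables of $E$ map cartesianly to representables of $C$ precisely along cocartesian data.
\end{itemize}

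For \cref{mapping-spaces-cart:zig-zag-start}, I would first use the coreflection from the proof of \cref{conduche-of-cocartesian}. This gives
\[
\verts{x \downarrow E_c \downarrow i_u y} \simeq \sum_{g : C(qx, c)} E[W_u^{-1}](i_u g_! x, i_u y),
\]
which reduces the claim to the following statement. For a cocartesian map $x \to g_! x$, precomposition
\[
E[W_u^{-1}](i_u g_! x, i_u y) \to E[W_u^{-1}](i_u x, i_u y)
\]
is cartesian over $C[W^{-1}](ic, iqy) \to C[W^{-1}](iqx, iqy)$. In other words, $i_u$ sends $q$-cocartesian maps to maps that are cocartesian relative to the localised base.

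To prove this I would run \cref{S-good} once more, with the same cospans, on the object $x'$ of $\gl(q^*)$ given by
\[
i_u^* i_{u!}\, E(-, y) \times_{q^* i^* i_!\, C(-, qy)} q^* C(-, c),
\]
or equivalently by checking that the cartesian-unit property established above is stable under precomposition with cocartesian arrows. Part (1) at both $x$ and $g_! x$, pasted with the cocartesian square for $x \to g_! x$, gives the desired cartesianness after pulling back along $C(c, qy) \to C[W^{-1}](ic, iqy)$.

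I expect two places to be the main obstacles.

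\begin{itemize}
\item The first is extending this cartesianness from the image of $C(c, qy)$ to all of $C[W^{-1}](ic, iqy)$. The plan is to do this inductively along the $S^n$ filtration, using that each stage adds only zigzags through $W$, and that along such zigzags transport in $E$ is invertible.
\item The second is verifying \cref{S-good:f-cart} in the first part, where the inverting hypothesis on $q$ genuinely enters.
\end{itemize}
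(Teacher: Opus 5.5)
Your Part (1) is the paper's argument: the same glued pair of localisation cospans, the same object $E(-,y) \to q^* C(-,qy)$ of $\gl(q^*)$, and the conclusion of \cref{S-good} read off at $x$. The paper settles the hypotheses you leave open as follows.
\begin{itemize}
\item It takes $W$ to be the \emph{groupoid} of its morphisms and $W_u$ the category of their cocartesian lifts. Then $s_u, t_u$ are pullbacks of $s_d, t_d$ over a groupoid base, and \cref{localisation-pullback} applies to the Kan-extension comparisons.
\item The counits are handled by the Conduch\'e condition of \cref{conduche-of-cocartesian}.
\item \Cref{S-good:f-cart} follows from the pushout formula for $m_!$ in \cref{arr-left-adj}, together with cartesianness of $s_! z \to t_! z$.
\end{itemize}

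The genuine gap is Part (2). Your coreflection reduction is correct; the paper uses the same reformulation in the proof of \cref{q-prime-cocart}. Pasting Part (1) at $x$ and at $g_! x$ with the cocartesian square does give cartesianness, but only over the image of $C(c,qy) \to C[W^{-1}](ic,iqy)$. That map is not surjective, and all of the content lies over genuine zigzags, for which you give no argument. Running \cref{S-good} on your $x'$ cannot help. That lemma only ever concludes that units $z \to S^\infty z$ are cartesian, a statement of Part (1) type. Your $x'$ is built from the already local presheaf $i_u^* i_{u!} E(-,y)$ and depends on a chosen point of $C[W^{-1}](ic,iqy)$. A direct induction on zigzags also stalls. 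You can absorb a trailing $(iw)^{-1}$ by replacing $y$ with $w_! y$, and a leading $i(h)$ by two-out-of-three for composites of cocartesian lifts. But a zigzag that begins with $(iw)^{-1}$ and ends with $i(h)$ requires the fibres of $E[W_u^{-1}](i_u x, i_u y)$ over zigzags, which is what is being proved.

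The missing idea is to carry a second invariant up the same tower $S^n$. Let $k \co \fib_q(c) \to E$ be the fibre inclusion, the pullback of $\braces{c} \to C$ along $q$. Call $z : \gl(q^*)$ \emph{nice} if the counit $k_! k^* z \to z$ is cartesian. Niceness of $S^\infty$ applied to $E(-,y) \to q^* C(-,qy)$, evaluated at $x$, is literally the square of Part (2), so no coreflection reduction is needed. The induction has three ingredients.
\begin{itemize}
\item The starting object is nice, by the Conduch\'e condition.
\item Niceness is closed under sequential colimits, since $k_!k^*$ and $q^*$ preserve them and they commute with pullback of presheaves.
\item If $z$ is nice and good, then $Sz$ is nice. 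Apply \cref{big-list:cart-pushout} to the map from $k_!k^*$ of the span defining $Sz$ to that span. The bottom maps are cartesian by goodness, and the right one by niceness. The map $k_!k^*\varepsilon_m z$ is cartesian because $k_!$ preserves cartesian maps. The left one is cartesian because $m_!\Delta^*\Delta_!m^* z \simeq t_! s^* z$ and the counits $k_!k^* t_! \to t_!$ are cartesian.
\end{itemize}
Since \cref{S-good} already makes every stage $S^n$ good, induction finishes the proof.
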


We will prove this by appealing to \cref{S-good}.
In fact, we establish a slightly more general result.
Say a Conduch\'e fibration $q \co E \to C$ inverts $W$ if morphisms in $W$ have all $q$-cocartesian and $q$-cartesian lifts and these coincide.%
\footnote{This can be read as saying that for a morphism $x \to y$ in $W$, the profunctor
$E_x \profto E_y$ is representable and invertible.}

\begin{lemma}\label{mapping-spaces-conduche}
Let $p \co E \to C$ be a Conduch\'e fibration which inverts $W$ and let $W_u$ be the collection of $q$-cocartesian (equivalently, $q$-cartesian) lifts of morphisms in $W$.
Then $p$ satisfies the conclusion of \cref{mapping-spaces-cart}.
\end{lemma}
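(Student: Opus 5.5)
We will apply \cref{S-good} to a map of cospans as in~\eqref{map-of-inputs} built from the two localisation squares of \cref{ex-localisation}: the upper row is the cospan $\rfib(C) \xrightarrow{m^*} \arr(\rfib(W)) \hookleftarrow \rfib(W)$ for the localisation $i \co C \to C[W^{-1}]$, and the lower row is the analogous cospan $\rfib(E) \xrightarrow{m_u^*} \arr(\rfib(W_u)) \hookleftarrow \rfib(W_u)$ for $i_u \co E \to E[W_u^{-1}]$. The vertical functors are left Kan extension $p_! \co \rfib(E) \to \rfib(C)$, together with the induced functors on $\rfib(W_u) \to \rfib(W)$ and on arrow categories, coming from the commutative square $W_u \to W$, $E \to C$. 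We need the map of cospans to commute: $m^* p_! \simeq \arr(q_!) m_u^*$. This is a Beck--Chevalley condition for the squares $W_u \to E$ over $W \to C$ (via $k$ and $l$). We verify it fibrewise using the comma formula from \cref{subsection-lke}. The key input is that the cartesian and cocartesian lifts of $W$ coincide: a map from $e$ into the fibre over $l w$ factors uniquely through the cartesian lift, and this identifies the relevant slices. Each row satisfies the hypotheses of \cref{Sinfty-reflector}. Descent holds in presheaf categories, and left Kan extension and restriction preserve colimits, which gives \cref{S-good:descent,S-good:preserve-colimits}.

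Next we identify what the conclusion of \cref{S-good} says. Take $x_d = E/y$, the representable on $y$, and $x_u = C/qy$. The map $m \co x_u \to p_! x_d$ is the equivalence $C/qy \simeq p_!(E/y)$ given by the Yoneda section. The reflectors $h_{u!}, h_{d!}$ are $i_!, i_{u!}$, which send representables to the representables of the localisation, restricted back along $i$ and $i_u$. The cartesian square in the conclusion, evaluated at $x : E$, is the square of \cref{mapping-spaces-cart:arrow}. This requires a little care, since the bottom row evaluated at $x$ is $q_!$ of a presheaf on $E$ evaluated at $qx$. Here we use \cref{conduche-of-cocartesian}-style reasoning, i.e.\ the Conduch\'e condition, to compute $(q_! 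P)(c)$ as $\verts{c \downarrow E \ldots}$. Evaluating instead at $c : C$ of $q_!$ gives \cref{mapping-spaces-cart:zig-zag-start}, because the fibre of left Kan extension at $c$ is the localisation of $c\downarrow$-comma data.

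The bulk of the work is checking conditions \cref{S-good:g-cart,S-good:f-cart,S-good:c-good}, together with goodness of the initial object $x$. For \cref{S-good:g-cart}, $g_!$ is $\cod$ on arrows, so it preserves cartesian squares trivially. Goodness of $x$ means that the squares comparing $E/y \to p^*(C/qy)$ with the unit maps are cartesian. Via the description of $f_!$ from \cref{arr-left-adj} as pushouts of $k_!, l_!$, this reduces to the Conduch\'e square for $q$ over $W$-morphisms, which we obtain from \cref{invertible-transport}-style cartesianness of the lifts. Condition \cref{S-good:c-good} asks that the glued objects $f_! g^* c$ be good. These are built from $l_!$ of presheaves on $W$, so the check is again a Conduch\'e computation, now for the maps $k w \to l w$.

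We expect \cref{S-good:f-cart} to be the main obstacle: showing that $f_!$, a pushout of $k_!$ and $l_!$ terms, preserves cartesian maps whose target has cartesian unit $\eta_g$. The first approach we would try is to split $f_!$ via \cref{arr-left-adj} and check that $k_!$ and $l_!$ separately preserve cartesian maps. This should follow because $W_u \to W$ is a right fibration-like map with discrete fibres given by cocartesian lifts, so Kan extension along it commutes with the comparison. We would then conclude with \cref{big-list:cart-pushout}, where the hypothesis on $\eta_g y$ is exactly what makes the seventh map of the span cartesian.
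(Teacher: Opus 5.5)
There is a genuine gap at the first step: the map of cospans you propose does not commute. In your notation, $p \co E \to C$ is the fibration and $q \co W_u \to W$.

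With left Kan extension $p_! \co \rfib(E) \to \rfib(C)$ as the vertical functor, you need $s^* p_! \simeq q_! s_u^*$. But $(p_! P)(c)$ is the localisation of $c \downarrow P$, a colimit over \emph{all} maps $c \to p e$. The coincidence of cartesian and cocartesian lifts of maps in $W$ gives no control over arbitrary maps out of $s w$.

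Here is a concrete failure. Let $C$ be generated by $w \co a \to b$ (the only map in $W$) and $f \co a \to d$, and let $E = 1$ with its object $e$ over $d$. This is a cocartesian fibration inverting $W$, with $W_u$ empty.
\begin{itemize}
\item At $w$, $s^* p_! \yo e$ is $C(a, d) \simeq 1$, whereas $q_! s_u^* \yo e$ is empty.
\item Equivalently, $p_!$ sends the (vacuously) $W_u$-local presheaf $\yo e$ to $C(-, d)$, which is not $W$-local. So $p_!$ does not even restrict to the reflective subcategories.
\item Your square would contain a comparison $C[W^{-1}](i c, i d) \to (p_! E[W_u^{-1}](i_u -, i_u e))(c)$. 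This cannot exist at $c = b$: the source contains $f w^{-1}$, while the target is $C(b, d) = \emptyset$.
\end{itemize}

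The paper instead uses the \emph{restriction} functors $p^* \co \rfib(C) \to \rfib(E)$, $q^* \co \rfib(W) \to \rfib(W_u)$ and $\arr(q^*)$, and glues along $p^*$. Commutativity is then automatic from $p s_u = s q$ and $p t_u = t q$. The hypotheses on $p$ are used only in the cartesianness checks: \cref{gluing-pullback-properties}, and cartesianness of $s_! x \to t_! x$, which uses that the lifts in $W_u$ are $p$-cartesian.

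Your glued object is also the wrong one. The map $C/py \to p_!(E/y)$ is an equivalence, so it carries no information about $E(x, y) \to C(p x, p y)$. Your square also lives in $\rfib(C)$, so no evaluation of it yields \cref{mapping-spaces-cart:arrow}. The paper glues $\yogl y$, the unit $\yo y \to p^* p_! \yo y = C(p -, p y)$, whose structure map is the action of $p$ on morphisms. The unit square of $\yogl y \to S^\infty \yogl y$, evaluated at $x : E$, is then literally the square of \cref{mapping-spaces-cart:arrow}.

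Finally, \cref{mapping-spaces-cart:zig-zag-start} does not come out of the same square by evaluating elsewhere; it needs a second invariant. For the fibre inclusion $k \co \fib_p(c) \to E$ over $\braces{c} \to C$, call $x : \gl(p^*)$ nice if the counit $k_! k^* x \to x$ is cartesian. Part (2) says exactly that $S^\infty \yogl y$ is nice. The paper proves this in three steps:
\begin{itemize}
\item $\yogl y$ is nice.
\item $S$ sends objects that are both nice and good to nice objects, via \cref{big-list:cart-pushout} of \cref{big-list} and \cref{gluing-pullback-properties}.
\item Niceness is preserved by sequential colimits.
\end{itemize}
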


We defer the proof of \cref{mapping-spaces-conduche} to the following subsection, but let us first remark that it is a direct generalisation of \cref{mapping-spaces-cart}.

\begin{proof}[Proof of \cref{mapping-spaces-cart} from \cref{mapping-spaces-conduche}]
By \cref{conduche-of-cocartesian,invertible-transport}, any cocartesian fibration that inverts $W$ is also a Conduch\'e fibration that inverts $W$.
\end{proof}

\subsection{Setting up the proof of the main lemma}

Throughout this subsection, we work in the context of \cref{mapping-spaces-conduche} and work toward its proof.

We start by establishing more systematic notation, which will be fixed until the lemma is proved.
Let us write $A_d$ and $A_u$ for $C$ and $E$; we systematically use subscripts $d$ and $u$ to denote things taking place at the level of $A_d$ and $A_u$, respectively.
So we write $W_d$ instead of $W$.
We also identify $W_d$ with its \emph{space} of morphisms.
Thus $W_d$ is a space with a functor $W_d \to \arr(A_d)$.
We denote the localisation functor $A_d \to A_d[W_d^{-1}]$ by $i_d$.
We denote by $W_u$ the full subcategory of $W_d \times_{\arr(A_d)} \arr(A_u)$ spanned by objects whose underlying arrow in $A_u$ is $q$-cocartesian (equivalently, $q$-cartesian).
As before, we denote the functor $A_u[W_u^{-1}] \to A_d[W_d^{-1}]$ by $q'$.
We denote the functor $W_u \to W_d$ by $p$, and the functors $W_d \to \arr(A_d)$ and $W_u \to \arr(A_u)$ by $m_d$ and $m_u$, respectively.
We denote the composites of $m_d$ with $\dom$ and $\cod$ by $s_d, t_d \co W_d \to A_d$, respectively, and similarly define $s_u, t_u \co W_u \to A_u$.

We emphasize the asymmetry between $W_d$ and $W_u$: it is important that we consider $W_d$ as a groupoid and $W_u$ as a category.
Since $W_u$ consists precisely of $q$-cocartesian (equivalently, $q$-cartesian) lifts of $W_d$, both of the following squares of categories are cartesian:
\begin{align*}
\begin{tikzcd}[ampersand replacement=\&]
  W_u
  \ar[r, "s_u"]
  \ar[d, "p"']
  \ar[dr, phantom, "\lrcorner" very near start]
\&
  A_u
  \ar[d, "q"]
\\
  W_d
  \ar[r, "s_d"]
\&
  A_d \rlap{,}
\end{tikzcd}
&&
\begin{tikzcd}[ampersand replacement=\&]
  W_u
  \ar[r, "t_u"]
  \ar[d, "p"']
  \ar[dr, phantom, "\lrcorner" very near start]
\&
  A_u
  \ar[d, "q"]
\\
  W_d
  \ar[r, "t_d"]
\&
  A_d \rlap{.}
\end{tikzcd}
\end{align*}

In the lemma below, $q^*$ is understood to refer to restriction $\rfib(A_d) \to \rfib(A_u)$ of right fibrations along $q \co A_u \to A_d$.
Given an object $a_u : A_u$, we denote by $\yogl a$ the object of $\gl(q^*)$ given by the unit $\yo a_u \to q^* q_! \yo a_u$ where $\yo a_u$ denotes the representable right fibration $\dom \co A_u / a_u \to A_u$.
Note that $q^* q_! \yo a_u$ is the right fibration $q \downarrow qa_u \to A_u$, and the unit $\yo a_u \to q^* q_! \yo a_u$ is given fibrewise by the action $A_u(b, a_u) \to A_d(q b, q a_u)$ of $q$ on morphisms.

\begin{lemma}\label{gluing-pullback-properties}
Let $A_d, A_u, q, W_d, W_u, p, t_d, t_u$ be as above.
Suppose we are given a cartesian square of categories as below, where $C_d$ is a groupoid:
\[
\begin{tikzcd}
  C_u
  \ar[r, "k_u"]
  \ar[d, "r"']
  \ar[dr, phantom, "\lrcorner" very near start]
&
  A_u
  \ar[d, "q"]
\\
  C_d
  \ar[r, "k_d"]
&
  A_d \rlap{.}
\end{tikzcd}
\]
Then:
\begin{parts}
\item\label{gluing-pullback-properties:k-cart}
The functor $k_! \co \gl(r^*) \to \gl(q^*)$ preserves cartesian maps.
\item\label{gluing-pullback-properties:counit-yo-cart}
For any $a_u \co A_u$, the counit $k_! k^* \yogl a \to \yogl a$ is cartesian.
\item\label{gluing-pullback-properties:counit-t-cart}
For any $x \co \gl(p)$, the counit $k_! k^* t_! x \to t_! x$ is cartesian.
\end{parts}
\end{lemma}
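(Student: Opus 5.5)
The plan is to make everything explicit fibrewise, using the Kan extension formulas of \cref{subsection-lke}, extensivity over the groupoid $C_d$, and \cref{localisation-pullback}. By \cref{glue-adj}, $k_! \co \gl(r^*) \to \gl(q^*)$ is computed componentwise. An object $x_u \to r^* x_d$ goes to
\[
  k_{u!} x_u \to k_{u!} r^* x_d \to q^* k_{d!} x_d ,
\]
where the second map is the mate of $k_u^* q^* \simeq r^* k_d^*$. The first step is to show that this mate $k_{u!} r^* \to q^* k_{d!}$ is invertible (a Beck--Chevalley property). I expect this to be the main obstacle, and it is the only place the Conduch\'e hypothesis on $q$ enters.

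\emph{Beck--Chevalley.} Evaluate both sides at $b : A_u$ using the fibre description of left Kan extension of right fibrations:
\begin{itemize}
\item $(k_{u!} r^* Q)(b)$ is the localisation of $b \downarrow C_u$ with coefficients in $Q$;
\item $(q^* k_{d!} Q)(b)$ is the localisation of $q b \downarrow C_d$ with coefficients in $Q$.
\end{itemize}
Since $C_d$ is a groupoid, $C_d \simeq \bigsqcup_{c} 1$, and extensivity lets us work one $c : C_d$ at a time. The comparison then becomes the map $\verts{b \downarrow E_{k_d c}} \to A_d(q b, k_d c)$, and we must show it is invertible. I would deduce this from the Conduch\'e square: its left column $\verts{x \downarrow \fib_q(k_d c) \downarrow z}$ is fibred over $A_d(q x, k_d c)$, so take $z$ terminal-like, or argue via the slice, to get the required identification. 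If this turns out to be delicate, I would first prove the weaker statement that the comparison is cartesian over the relevant base, which is all the later parts need.

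\emph{Part 1.} Given a cartesian map in $\gl(r^*)$, i.e. a square $P \to P'$ over $r^* Q \to r^* Q'$ that is cartesian, its image is the square $k_{u!} P \to k_{u!} P'$ over $q^* k_{d!} Q \to q^* k_{d!} Q'$, using Beck--Chevalley. Fibrewise at $b$ and over each $c : C_d$, every corner is a localisation of a category fibred over the groupoid $q b \downarrow C_d$. The original square is cartesian and lies over the groupoid-valued $r^*$-part, so \cref{localisation-pullback} shows that localisation preserves it. Hence the image square is cartesian.

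\emph{Part 2.} The counit $k_! k^* \yogl a \to \yogl a$ is cartesian exactly when, for each $b$, the square
\[
\begin{tikzcd}
  (k_{u!} k_u^* \yo a_u)(b)
  \ar[r]
  \ar[d]
&
  A_u(b, a_u)
  \ar[d]
\\
  (k_{d!} k_d^* q_! \yo a_u)(q b)
  \ar[r]
&
  A_d(q b, q a_u)
\end{tikzcd}
\]
is cartesian. Decompose over $c : C_d$ by extensivity. The top-left corner becomes $\verts{b \downarrow E_{k_d c} \downarrow a_u}$, and the bottom-left becomes $A_d(q b, k_d c) \times A_d(k_d c, q a_u)$. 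The square is then literally the Conduch\'e square of $q$, which is cartesian by hypothesis.

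\emph{Part 3.} The functor $t$ arises from a square $t_u, t_d$ over $p$ that is cartesian with groupoid base $W_d$. So parts 1 and 2 (and Beck--Chevalley) also hold with $t$ in place of $k$, with $q$ itself as the Conduch\'e fibration. I would rewrite $k^* t_!$ by base change along the pullback of $k$ and $t$. That pullback again has groupoid bottom $C_d \times_{A_d} W_d$, so its mate is invertible by the same fibrewise argument. This identifies the counit $k_! k^* t_! x \to t_! x$ with $t_!$ applied to the counit for the pulled-back square, together with the Conduch\'e computation with $a_u$ replaced by $t_u w$. Cartesianness then follows from the analogue of part 1 for $t_!$ combined with the argument of part 2.
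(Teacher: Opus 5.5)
\textbf{Part 3 has a genuine gap.} You rewrite $k^* t_!$ through the \emph{pullback} of $k$ and $t$ and assert that its mate is invertible. That would need a fibrational hypothesis, such as $k$ a left fibration or $t$ a cartesian fibration. Here $k_d$ and $t_d$ are arbitrary functors out of groupoids. Already at the $d$-level, $(k_d^* t_{d!} Q)(c) \simeq \verts{k_d c \downarrow Q}$ consists of a point of $Q$ over some $w$ together with an \emph{arbitrary} morphism $k_d c \to t_d w$. The pullback $C_d \times_{A_d} W_d$ only records isomorphisms $k_d c \simeq t_d w$. For instance, take $q = \id_{\II}$, $W = \braces{w \co 0 \to 1}$ and $k = s$, which is a case the paper actually needs. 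Then $(s_d^* t_{d!} 1)(w) \simeq \II(0,1)$ is a point, while the pullback of $s_d$ and $t_d$ is empty.

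So the counit $k_! k^* t_! x \to t_! x$ is not $t_!$ applied to a counit, and your reduction to parts 1 and 2 collapses. The formula that is available is the comma one from \cref{subsection-lke}: $k^* t_! \simeq \dom_! \cod^*$ along $k \downarrow t$. The paper uses it to argue directly. Write $x = (x_u \to p^* x_d)$ and evaluate at $a_u$ with $a_d = q a_u$. The counit becomes the square with top row $\verts{a_u \downarrow C_u \downarrow x_u} \to \verts{a_u \downarrow x_u}$, lying over the groupoids $a_d \downarrow C_d \downarrow x_d \to a_d \downarrow x_d$. Splitting fibrewise over the groupoids $C_d$ and $x_d$ and using \cref{localisation-pullback}, this reduces to the Conduch\'e square of $q$ through the fibre over $k_d c$, just as in your part 2.

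\textbf{The opening Beck--Chevalley claim is also false here, though dispensable.} Invertibility of $k_{u!} r^* \to q^* k_{d!}$ amounts to $\verts{b \downarrow \fib_q(k_d c)} \simeq A_d(qb, k_d c)$. This holds when $q$ is cocartesian, since cocartesian maps out of $b$ form a coreflective subcategory, as in \cref{conduche-of-cocartesian}. It fails for Conduch\'e $q$ in general. The right fibration $0 \co 1 \to \II$ is Conduch\'e, and for $k_d = 1$ the left side is empty while $\II(0,1)$ is a point. The Conduch\'e square fixes a target $z$, so it cannot give this identification.

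Parts 1 and 2 do not need it. By \cref{glue-adj} the mate is simply built into $k_!$. For part 1, the square of categories $b \downarrow P \to b \downarrow P'$ over $qb \downarrow Q \to qb \downarrow Q'$ is cartesian by pullback pasting. Its bottom-right corner $qb \downarrow Q'$ is a groupoid, so \cref{localisation-pullback} applies directly. Note that the groupoid you need is $qb \downarrow Q'$, not anything on the $r^*$ side. With this correction, your parts 1 and 2 are the paper's arguments. Also, contrary to your remark, the Conduch\'e hypothesis enters in parts 2 and 3, not in a Beck--Chevalley step.
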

\begin{proof}\leavevmode
\begin{parts}
\item
Let $x \to y$ be a cartesian map in $\gl(r^*)$.
We denote by $x_u \to C_u$ the right fibration $\pi_1x$, and similarly define $x_d, y_u, y_d$.
Thus the fact that $x \to y$ is cartesian means that $x_u \simeq x_d \times_{y_d} y_u$.
Using the pointwise formula for left Kan extension, we have to show that the following square of spaces is cartesian for $a_u : A_u$ with $a_d \coloneqq q a_u$:
\[
\begin{tikzcd}
  \verts{a_u \downarrow x_u}
  \ar[r]
  \ar[d]
&
  \verts{a_u \downarrow y_u}
  \ar[d]
\\
  \verts{a_d \downarrow x_d}
  \ar[r]
&
  \verts{a_d \downarrow y_d} \rlap{.}
\end{tikzcd}
\]
Since $C_d$ is a groupoid, so is $y_d$, and hence so is $a_d \downarrow y_d$.
So by \cref{localisation-pullback}, it suffices to show that the square above is a cartesian square of categories before localisation.
This follows from pullback pasting.
\item
Given $a_u, b_u : A_u$ with $a_d \coloneqq qa_u$ and $b_d \coloneqq q b_u$, we have to show that the following square of spaces is cartesian:
\[
\begin{tikzcd}
  \verts{b_u \downarrow C_u \downarrow a_u}
  \ar[r]
  \ar[d]
&
  A_u(b_u,a_u)
  \ar[d]
\\
  \verts{b_d \downarrow C_d \downarrow a_d}
  \ar[r]
&
  A_d(b_d,a_d) \rlap{.}
\end{tikzcd}
\]
Working fibrewise over $C_d$ and using that $C_u \simeq C_d \times_{A_d} A_u$, this amounts to a Conduch\'e condition.
\item
To describe $k_! k^* t_! x$, we use that $k^* t_!$ can be rewritten as restriction followed by left Kan extension (see \cref{subsection-lke}).
We thus have to show that for $a_u : A_u$ with $a_d \coloneqq qa_u$, the following square of spaces is cartesian:
\[
\begin{tikzcd}
  \verts{a_u \downarrow C_u \downarrow x_u}
  \ar[r]
  \ar[d]
&
  \verts{a_u \downarrow x_u}
  \ar[d]
\\
  a_d \downarrow C_d \downarrow x_d
  \ar[r]
&
  a_d \downarrow x_d \rlap{.}
\end{tikzcd}
\]
Using \cref{localisation-pullback} and working fibrewise over $C_d$, $x_d$, this reduces to a Conduch\'{e} condition.
\qedhere
\end{parts}
\end{proof}

\begin{lemma}[label=assumptions-satisfied]
With notation and assumptions as before, the diagram of categories below satisfies all the assumptions of \cref{S-good}.
Moreover, for any $a_u : A_u$, the object $\yogl a : \gl(q^*)$ is good.
\[
\begin{tikzcd}
  \rfib(A_u)
  \ar[r, "m_u^*"]
&
  \arr(\rfib(W_u))
&
  \rfib(W_u)
  \ar[l, "\Delta"', hook']
\\
  \rfib(A_d)
  \ar[r, "m_d^*"']
  \ar[u, "q^*"]
&
  \arr(\rfib(W_d))
  \ar[u, "\arr(p^*)"']
&
  \rfib(W_d)
  \ar[l, "\Delta", hook']
  \ar[u, "p^*"']
\end{tikzcd}
\]
\end{lemma}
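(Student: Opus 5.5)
The plan is to check the hypotheses of \cref{S-good} one at a time for the displayed map of cospans, and then verify goodness of $\yogl a$ separately. Both rows satisfy the assumptions of \cref{Sinfty-reflector} for the reasons already given in \cref{ex-localisation}. For conditions \cref{S-good:descent} and \cref{S-good:preserve-colimits}, I would use that $\rfib(A_u)$ and $\arr(\rfib(W_u))$ are presheaf categories (respectively arrows in one). Colimits there are pointwise, so pushouts and sequential colimits satisfy descent by descent in $\spc$. The functors $q^*$, $\arr(p^*)$, $m^*_u$, $m^*_d$ and $\Delta$ are all restrictions, or built from restrictions, so they preserve these colimits. By the footnote to \cref{S-good}, the glued functors $f^*$ and $g^*$ then preserve them as well.

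For \cref{S-good:g-cart}: the reflector $g_!$ is induced by $\cod$ on both levels, and $\cod \circ \arr(p^*) = p^* \circ \cod$ is a commutative square. Since $\cod$ preserves pullbacks, \cref{big-list:cart-func} of \cref{big-list} gives that $g_!$ preserves cartesian maps.

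For \cref{S-good:f-cart}, I use \cref{arr-left-adj} with $m = (s \to t)$. It describes $f_!$ on an arrow $x_0 \to x_1$ in $\gl(\arr(p^*))$ as the pushout of the span
\[ s_! x_0 \leftarrow t_! x_0 \rightarrow t_! x_1 , \]
where each of $s_!$ and $t_!$ goes from $\gl(p^*)$ to $\gl(q^*)$.
\begin{itemize}
\item Both squares relating $W_u \to W_d$ and $A_u \to A_d$ via $s$ and $t$ are cartesian, and $W_d$ is a groupoid. So \cref{gluing-pullback-properties:k-cart} of \cref{gluing-pullback-properties} says $s_!$ and $t_!$ preserve cartesian maps.
\item A cartesian map $x \to y$ therefore induces a map of spans whose three vertical maps are cartesian.
\item The horizontal legs of the two spans are $t_!$ applied to the arrows underlying $x$ and $y$. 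These arrows are exactly $\eta_g x$ and $\eta_g y$ for $\Delta \dashv \cod$.
\item The hypothesis that $\eta_g y$ is cartesian, and hence $\eta_g x$ by \cref{big-list:cart-comp}, makes these legs cartesian.
\end{itemize}
Then \cref{big-list:cart-pushout} shows the induced map of pushouts is cartesian.

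Condition \cref{S-good:c-good} and the goodness of $\yogl a$ are the substantive part. For $c : \gl(p^*)$ we have $f_! g^* c \simeq t_! c$, since $m_!\Delta \simeq t_!$.
\begin{itemize}
\item \emph{The counit $\varepsilon_f$.} Both $\varepsilon_f(t_! c)$ and $\varepsilon_f(\yogl a)$ are counits of $m_! m^*$, which is a pushout of $s_! t^*$, $t_! t^*$ and $s_! s^*$. I would show they are cartesian using \cref{gluing-pullback-properties:counit-t-cart} and \cref{gluing-pullback-properties:counit-yo-cart} for $k = s, t$. These give cartesian counits $k_! k^* z \to z$, which I then assemble with \cref{big-list:cart-cobase-change}, \cref{big-list:cart-cogap} and \cref{big-list:cart-comp} of \cref{big-list}.
\item \emph{The unit $\eta_g$ for $\yogl a$.} Here $\eta_g f^*$ is the map $s^* z \to t^* z$ in $\gl(p^*)$. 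For $z = \yogl a$ its component at $w_u : W_u$ is the square
\[
\begin{tikzcd}
  A_u(t w_u, a) \ar[r] \ar[d] & A_u(s w_u, a) \ar[d] \\
  A_d(t w_d, a_d) \ar[r] & A_d(s w_d, a_d) \rlap{.}
\end{tikzcd}
\]
It is cartesian precisely because $w_u$ is $q$-cocartesian.
\item \emph{The unit $\eta_g$ for $t_! c$.} This requires the analogous square for $t_! c$, computed pointwise as localised comma categories. I would reduce it via \cref{localisation-pullback}, working fibrewise over the groupoids $W_d$ and $c_d$, to the cocartesianness, equivalently cartesianness, of lifts of $W_d$ together with the Conduch\'e condition.
\end{itemize}
I expect this last verification, $\eta_g f^* t_! c$ cartesian, to be the main obstacle. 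It mixes precomposition with a cocartesian $w_u$ against maps out of $t_!$-images, so both the ``cartesian equals cocartesian'' hypothesis and the Conduch\'e condition must be used together.
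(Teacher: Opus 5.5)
\paragraph{Gap in your verification of \cref{S-good:f-cart}.} You have applied \cref{arr-left-adj} with $s$ and $t$ swapped. Since $m^*$ corresponds to $t^* \to s^*$, the mate is $s_! \to t_!$, and $m_!(x_0 \to x_1)$ is the pushout of $t_! x_0 \leftarrow s_! x_0 \to s_! x_1$. This also matches your own later, correct description of $m_! m^*$ as a pushout of $s_! t^*$, $t_! t^*$, $s_! s^*$.

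In this span only one leg, $s_!(x_0 \to x_1)$, comes from the arrow; it is handled by \cref{gluing-pullback-properties:k-cart}. The other leg is the component $s_! x_0 \to t_! x_0$ of the mate. Your count of seven cartesian maps omits it, and in your swapped version the needed leg $t_! x_0 \to s_! x_0$ is not even available. Without it, \cref{big-list:cart-pushout} cannot be applied.

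Nothing in \cref{gluing-pullback-properties} supplies this leg. What is missing is a separate claim: $s_! x \to t_! x$ is cartesian for every $x : \gl(p^*)$.
\begin{itemize}
\item Pointwise at $a_u$, the claim says the square $\verts{a_u \downarrow_s x_u} \to \verts{a_u \downarrow_t x_u}$ over $a_d \downarrow_s x_d \to a_d \downarrow_t x_d$ is cartesian.
\item The bottom row consists of groupoids, because $W_d$ is one. So \cref{localisation-pullback} reduces the claim to a fibrewise statement over $x_d$.
\item That statement is that postcomposition with $s_u w_u \to t_u w_u$ gives a cartesian square of mapping spaces, i.e.\ that the components of $s_u \to t_u$ are $q$-\emph{cartesian}.
\end{itemize}
This is exactly where the cartesian half of the hypothesis that $q$ inverts $W$ enters, and your proposal never uses it.

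\paragraph{Secondary remarks.} The same claim is the cleanest way to finish your counit argument. \Cref{big-list:cart-cogap} needs all four sides of the square with corners $s_! t^* z$, $s_! s^* z$, $t_! t^* z$, $z$ to be cartesian.
\begin{itemize}
\item The two counits come from \cref{gluing-pullback-properties}.
\item The side $s_! t^* z \to t_! t^* z$ is an instance of the missing claim.
\item The side $s_! t^* z \to s_! s^* z$ then follows by cancellation.
\end{itemize}
Alternatively, you can first prove that $t^* z \to s^* z$ is cartesian and cancel the other way, but then you should state that dependency explicitly.

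Conversely, the step you flag as the main obstacle is routine. The map $t^* t_! c \to s^* t_! c$ is precomposition with the $q$-cocartesian maps $s_u w_u \to t_u w_u$, and \cref{localisation-pullback} finishes it with no Conduch\'e input.
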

\begin{proof}
It is immediate that both cospans (rows) satisfy the assumptions of \cref{Sinfty-reflector}, and that \cref{S-good:descent,S-good:preserve-colimits} of \cref{S-good} are satisfied.

We have $\gl(\arr(p^*)) \simeq \arr(\gl(p^*))$, with the left adjoint $\Delta_! \co \gl(\arr(p^*)) \to \arr(p^*)$ to $\Delta^*$ corresponding to $\cod \co \arr(\gl(p^*)) \to \gl(p^*)$.
A morphism in $\gl(\arr(p^*))$ is cartesian if it is sent to cartesian morphisms in $\gl(p^*)$ by both $\dom$ and $\cod$.
It follows that $\Delta_!$ preserves cartesian morphisms, \ie \cref{S-good:g-cart} of \cref{S-good} is satisfied.
We also have that, for $x : \gl(\arr(p^*))$, the unit $\eta_\Delta x \co x \to \Delta^* \Delta_! x$ is cartesian if and only if $x$ is cartesian as a morphism in $\gl(p^*)$.

The functor $m^* \co \gl(q^*) \to \gl(\arr(p^*))$ corresponds to the morphism $t^* \to s^*$ of functors $\gl(q^*) \to \gl(p^*)$.
By \cref{arr-left-adj}, the left adjoint $m_! \co \gl(\arr(p^*)) \to \gl(q^*)$ is given by the following pushout:
\[
\begin{tikzcd}
  s_! \dom
  \ar[r]
  \ar[d]
&
  s_! \cod
  \ar[d]
\\
  t_! \dom
  \ar[r]
&
  m_! \rlap{.}
  \ar[ul, phantom, "\ulcorner" very near start]
\end{tikzcd}
\]
We claim that the map $s_! x \to t_! x$ is cartesian for any $x : \gl(p^*)$.
This amounts to showing that the following square of spaces below is cartesian:
\[
\begin{tikzcd}
  \verts{a_u \downarrow_s x_u}
  \ar[r]
  \ar[d]
&
  \verts{a_u \downarrow_t x_u}
  \ar[d]
\\
  a_d \downarrow_s x_d
  \ar[r]
&
  a_u \downarrow_t x_u \rlap{.}
\end{tikzcd}
\]
This follows from \cref{localisation-pullback} and the fact that the components of $s_u \to t_u$ are $q$-cartesian.

We now argue that $m_!$ preserves every cartesian map $x \to y$ in $\gl(\arr(p^*))$ such that $y$ determines a cartesian map of $\gl(p^*)$.
Write $x_0 \to x_1$, $y_0 \to y_1$ for the maps in $\gl(p^*)$ determined by $x, y$.
The fact that $x \to y$ is cartesian means that $x_0 \to y_0$ and $x_1 \to y_1$ are cartesian; by composition and cancellation, so is $x_0 \to x_1$.
Now $m_! x \to m_! y$ is the map on pushouts induced by the map of spans below:
\[
\begin{tikzcd}
  t_! x_0
  \ar[d]
&
  s_! x_0
  \ar[d]
  \ar[l]
  \ar[r]
&
  s_! x_1
  \ar[d]
\\
  t_! y_0
&
  s_! y_0
  \ar[l]
  \ar[r]
&
  s_! y_1
  \end{tikzcd}
\]
Every map in the above diagram is cartesian either by \cref{gluing-pullback-properties:k-cart} of \cref{gluing-pullback-properties} or by the fact that $s_! \to t_!$ is always cartesian, so $m_! x \to m_! y$ is cartesian by \cref{big-list:cart-pushout} of \cref{big-list}.
This establishes \cref{S-good:f-cart} of \cref{S-good}.

It remains to verify that $\yogl a$ is good for all $a_u : A_u$ and that $m_! \Delta^* y$ is good for all $y : \gl(p^*)$.
Recall that an object $x : \gl(q^*)$ is good if $\varepsilon_m x$ and $t^* x \to s^* x$ are both cartesian.
Note that $m_! \Delta^* y$ is simply $t_! y$.
We have that $\varepsilon_m x$ is the cogap map of the following square.
\[
\begin{tikzcd}
  s_! t^* x
  \ar[r]
  \ar[d]
&
  s_! s^* x
  \ar[d]
\\
  t_! t^* x
  \ar[r]
&
  x \rlap{,}
\end{tikzcd}
\]
The map $s_! t^* x \to t_! t^* x$ is always cartesian, as shown above.
By \cref{big-list:cart-comp,big-list:cart-cogap} of \cref{big-list}, $\varepsilon_m x$ is cartesian if both counits $t_! t^* x \to x$ and $s_! s^* x \to x$ are cartesian.
In the cases at hand, where $x$ is either $\yogl a$ or $t_! c$, this follows from \cref{gluing-pullback-properties:counit-yo-cart,gluing-pullback-properties:counit-t-cart} of \cref{gluing-pullback-properties}

The map $t^* \yogl a \to s^* \yogl a$ being cartesian is a direct consequence of the fact that the components of $s_u \to t_u$ are $q$-cocartesian.

The map $t^* t_! y \to s^* t_! y$ being cartesian is the statement that the square
\[
\begin{tikzcd}
  t_u w_u \downarrow_t y_u
  \ar[r]
  \ar[d]
&
  s_u w_u \downarrow_t y_u
  \ar[d]
\\
  t_d w_d \downarrow_t y_d
  \ar[r]
&
  s_d w_d \downarrow_t y_d
\end{tikzcd}
\]
is cartesian, which again follows from $s_uw_u \to t_u w_u$ being $q$-cocartesian, together with \cref{localisation-pullback}.
\end{proof}

We are now finally ready to return to the main lemma.
Recall that we changed notation after stating the lemma, so that $E$ and $C$ in the statement of the lemma correspond to $A_u$ and $A_d$, respectively.

\begin{proof}[Proof of \cref{mapping-spaces-conduche}.]
We need to show both parts of \cref{mapping-spaces-cart}.
The square in \cref{mapping-spaces-cart:arrow} is precisely the square underlying the unit map $\yogl y \to S^\infty \yogl y$ evaluated at an object $x : A_u$.
This is cartesian by \cref{S-good,assumptions-satisfied}.

For \cref{mapping-spaces-cart:zig-zag-start}, suppose $y : A_u$ and $c : A_d$ and consider the cartesian square of categories%
\footnote{Here, $\braces{c}$ is the terminal category, and the functor $k_d \co \braces{c} \to A_d$ selects the object $c$.}
\[
\begin{tikzcd}
  \fib_q(c)
  \ar[r, "k_u"]
  \ar[d, "r"']
  \ar[dr, phantom, "\lrcorner" very near start]
&
  A_u
  \ar[d, "q"]
\\
  \braces{c}
  \ar[r, "k_d"]
&
  A_d \rlap{.}
\end{tikzcd}
\]
Say that an object $x : \gl(q^*)$ is \emph{nice} if the counit $k_! k^* x \to x$ is cartesian.
The desired statement is precisely the statement that $S^\infty \yogl y$ is nice.
Since $k_!k^*$ and $q^*$ preserve sequential colimits and sequential colimits
in $\rfib(A_u)$ commute with pullbacks, any sequential colimit of nice objects is nice.
We have that $\yogl y$ is nice by \cref{gluing-pullback-properties:counit-yo-cart} of \cref{gluing-pullback-properties}.
We claim that if $x : \gl(q^*)$ is nice \emph{and good}, then $Sx$ is also nice.
It then follows that $S^n \yogl y$ is nice and good for every $n$ (by \cref{S-good,assumptions-satisfied}), so that $S^\infty \yogl y$ is nice.

Since $k_! k^*$ preserves pushouts, in particular the pushout defining $Sx$, it suffices by \cref{big-list:cart-pushout} of \cref{big-list} to show that every labelled map in the below diagram is cartesian:
\[
\begin{tikzcd}[column sep=4em]
  k_! k^* m_! \Delta^* \Delta_! m^* x
  \ar[d, "\varepsilon_k \cdots"']
&
  k_! k^* m_! m^* x
  \ar[d, "\epsilon_k \cdots"]
  \ar[l]
  \ar[r, "k_! k^* \epsilon_m x"]
&
  k_! k^* x
  \ar[d, "\varepsilon_k x"]
\\
  m_! \Delta^* \Delta_! m^* x
&
  m_! m^* x
  \ar[l, "m_! \eta_\Delta m^* x"]
  \ar[r, "\varepsilon_m x"']
&
  x \rlap{.}
\end{tikzcd}
\]
The bottom maps are cartesian since $x$ is assumed to be good.
The right map is cartesian since $x$ is assumed to be nice.
To see that the left map is cartesian, note that $m_! \Delta^* \Delta_! m^* x \simeq t_! s^* x$ and apply \cref{gluing-pullback-properties:counit-t-cart} of \cref{gluing-pullback-properties}.
To see that $k_! k^* \varepsilon_m x$ is cartesian we apply \cref{gluing-pullback-properties:k-cart} of \cref{gluing-pullback-properties}.
\end{proof}

\subsection{Descent along localisations}

At this point, we treat \cref{mapping-spaces-cart} as a black box and make no further use of gluings or of the sequential-colimit description of mapping spaces of localisations.
Let us switch back to writing $C$, $E$, $W$ in place of $A_d$, $A_u$, $W_d$, respectively.

We record two easy observations about the localisation $i \co C \to C[W^{-1}]$.
First, it is surjective; indeed the inclusion of the essential image of $i$ into $C[W^{-1}]$ has a section by the universal property of $C[W^{-1}]$.
Second, given objects $x, y : C$ and a map $f \co i x \to i y$ in the localisation, there exists a sequence of objects $x = x_0, x_1, \ldots, x_n = y$ and maps $f_j \co i x_j \to i x_{j+1}$ whose composite is $f$ and such that each $f_j$ is either of the form $i g$ with $g \co x_j \to x_{j+1}$, or of the form $(i g)^{-1}$ with $g$ in $W$.
This can be shown by using that $i^* i_! C(-, y)$ is the initial presheaf under $C(-, y)$ with the property of inverting $W$.

\begin{lemma}[label=q-prime-cocart]
Let $q \co E \to C$ be a cocartesian fibration which inverts $W$ and let $W_u$ be the collection of $q$-cocartesian lifts of morphisms in $W$.
Let $q'$ be the induced functor $E[W_u^{-1}] \to C[W^{-1}]$.
Then $q'$ is a cocartesian fibration and the following square is cartesian:
\[
\begin{tikzcd}
  E
  \ar[r, "i_u"]
  \ar[d, "q"']
  \ar[dr, phantom, "\lrcorner" very near start]
&
  E[W_u^{-1}]
  \ar[d, "q'"]
\\
  C
  \ar[r, "i"]
&
  C[W^{-1}] \rlap{.}
\end{tikzcd}
\]
\end{lemma}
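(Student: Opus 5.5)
We will deduce both claims from \cref{mapping-spaces-cart}, together with the two observations just recorded about $i$: it is surjective, and morphisms of $C[W^{-1}]$ between objects in the image of $i$ decompose as zigzags. Throughout, $i_u \co E \to E[W_u^{-1}]$ is also surjective, by the same argument.

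\emph{Cartesianness of the square.} We show that the gap map $g \co E \to C \times_{C[W^{-1}]} E[W_u^{-1}]$ is surjective and fully faithful.

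Full faithfulness is exactly \cref{mapping-spaces-cart:arrow}. For $x, y : E$, the mapping space of the pullback is $C(qx,qy) \times_{C[W^{-1}](iqx,iqy)} E[W_u^{-1}](i_u x, i_u y)$, and the square there says $E(x,y)$ maps isomorphically onto it.

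For surjectivity, take an object $(c, e, \phi)$ of the pullback, with $\phi \co ic \simeq q'e$. Since $i_u$ is surjective, we may write $e \simeq i_u y$. Then $\phi$ becomes an isomorphism $ic \to iqy$ in $C[W^{-1}]$.

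We try to show that the isomorphism $\phi$ lifts through \cref{mapping-spaces-cart:zig-zag-start}. The plan is as follows.
\begin{itemize}
\item Apply that part with $x$ ranging over the fibre and $c$ fixed.
\item Alternatively, argue via the zigzag description of $\phi$: it is a composite of maps $ig$ and $(iw)^{-1}$ with $w \in W$.
\item Transport $y$ backwards along the zigzag. Forward maps $g$ are lifted by taking the cocartesian transport $g_!$ along the reversed direction, which needs care. Maps $w \in W$ are lifted using that $w_!$ is invertible, so every object over the target of $w$ is the codomain of a $W_u$-map, which becomes invertible in $E[W_u^{-1}]$.
\end{itemize}
Doing this along the zigzag produces $z$ over $c$ with $i_u z \simeq i_u y$ compatibly with $\phi$. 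The cleaner route is \cref{mapping-spaces-cart:zig-zag-start} applied to the inverse $\phi^{-1} \co iqy \to ic$. Its fibre over $(\mathrm{id}_{qx}, \phi)$-type points exhibits objects $z \in E_c$ together with maps $i_u z \to i_u y$ lying over $\phi$. We will use this to produce $z$, then check invertibility of the map using full faithfulness in both directions.

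\emph{Cocartesian fibration.} By surjectivity of $i$ and $i_u$, it suffices to consider a source object $i_u x$ and a morphism $f \co iqx \to ic$ in $C[W^{-1}]$. This morphism has the form $(\mathrm{id}, f) \in C(qx,qx) \times C[W^{-1}](iqx, ic)$, with $c$ in the image of $i$.

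We will construct the lift with \cref{mapping-spaces-cart:zig-zag-start}, taking the comma with $c \coloneqq qx$ and using that $\verts{x \downarrow E_{qx} \downarrow i_u y}$ is equivalent, by the initial object $\mathrm{id}_x$ of $x \downarrow E_{qx}$, to $E[W_u^{-1}](i_u x, i_u y)_{\text{fibre}}$. Concretely, the plan is to factor $f$ through the zigzag description, lift each $ig$ by a $q$-cocartesian map, and lift each $(iw)^{-1}$ by the inverse of the image of a $W_u$-map. The first lifts are cocartesian by \cref{mapping-spaces-cart:arrow}; the second are invertible hence cocartesian. Cocartesian maps compose, so the composite lift is cocartesian.

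The check that $i_u$ of a $q$-cocartesian $\bar g \co x \to y$ is $q'$-cocartesian is where \cref{mapping-spaces-cart:zig-zag-start} enters. Take the target $i_u z$ (general by surjectivity). We need precomposition with $i_u \bar g$ to induce $E[W_u^{-1}](i_u y, i_u z) \simeq E[W_u^{-1}](i_u x, i_u z) \times_{C[W^{-1}](iqx, iqz)} C[W^{-1}](iqy, iqz)$. For this we rewrite both mapping spaces via \cref{mapping-spaces-cart:zig-zag-start} as localised iterated commas over $C(qx, c)\times C[W^{-1}](ic, iqz)$, taking $c = qy$ and $c = qx$ respectively. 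Cocartesianness of $\bar g$ then identifies $x \downarrow E_{qy}$ with $y \downarrow E_{qy}$ over $\mathrm{id}$, as in the proof of \cref{conduche-of-cocartesian}.

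\emph{Main obstacle.} The main obstacle is this last identification: matching the fibres of the two localised iterated commas so that the comparison map is the one induced by $\bar g$, rather than some other equivalence. Surjectivity of the gap map carries the same bookkeeping in milder form. For both, I would first try to work fibrewise over $C(qx,c)$ and reduce to the Yoneda lemma in the fibre $E_c$.
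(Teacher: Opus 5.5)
Your overall strategy matches the paper's. Part (1) of \cref{mapping-spaces-cart} gives full faithfulness of the gap map. Part (2) shows that $i_u \bar g$ is $q'$-cocartesian for $q$-cocartesian $\bar g$, using that the fibre of $x \downarrow E_{qy} \to C(qx,qy)$ over $q\bar g$ has initial object $\bar g$. Part (1) alone cannot give this, despite one sentence of yours saying so, because it says nothing about maps out of $i_u y$. Finally, lifting along a zigzag, with essential surjectivity of $w_!$ handling the steps $(iw)^{-1}$, gives cocartesian lifts out of any object $i_u x$. The bookkeeping you flag as the main obstacle is handled in the paper by exactly this coreflection: the fibre of $\verts{x \downarrow E_c \downarrow i_u y}$ over $f$ is $E'(i_u f_! x, i_u y)$, with comparison map given by precomposition with $i_u$ of the cocartesian arrow $x \to f_! x$.

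The genuine gap is surjectivity of the gap map; neither route you describe works. Transporting $y$ \emph{backwards} along a zigzag for $\phi \co ic \to iqy$ fails at every forward step $ig$. There you must move an object over the target of $g$ to one over its source, which needs cartesian lifts or surjectivity of $g_!$, and $q$ has neither. The ``cleaner route'' through part (2) is circular. To extract an object $z$ of $E_c$ from that cartesian square you need some $x$, a morphism $qx \to c$ in $C$, and a compatible element of $E'(i_u x, i_u y)$. None of these is available: nonemptiness of $E_c$ is part of what must be shown, and a morphism of $E'$ out of $i_u y$ over $\phi^{\pm 1}$ is exactly the lifting problem.

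The repair is to reverse the order, as the paper does. First prove the lifting statement in tracked form: a cocartesian lift of $f \co iqx \to ic$ starting at $i_u x$ exists and ends at some $i_u z$ with $z$ in $E_c$. Each zigzag step stays in the image of $E$ over the correct vertex, via $g_! u$ for a step $ig$ and a $w_!$-preimage for a step $(iw)^{-1}$. Then apply this to the isomorphism $\phi^{-1} \co iqy \to ic$, starting at $i_u y$. A cocartesian lift of an isomorphism is invertible, so $i_u z \simeq i_u y \simeq e$ compatibly with $\phi$, and this is the required surjectivity.
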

\begin{proof}
Let us denote the localisations $C[W^{-1}]$ and $E[W_u^{-1}]$ by $C'$ and $E'$, respectively.
By \cref{mapping-spaces-cart:arrow} of \cref{mapping-spaces-cart}, we have that the gap map $E \to E' \times_{C'} C$ is fully faithful.
In particular, this means that for $c : C$, the functor $\fib_q(c) \to \fib_{q'}(ic)$ is fully faithful.

We can reformulate \cref{mapping-spaces-cart:zig-zag-start} of \cref{mapping-spaces-cart} using the fact that $q$ is a cocartesian fibration.
Let $x, y : E$ and $c : C$.
Note that for $x : E$, the comma category $x \downarrow E_c$ has a coreflective subcategory spanned by $q$-cocartesian maps.
This means that for $f \co q x \to c$, the fibre of $\verts{x \downarrow E_c \downarrow i_u y}$ over $f$ is the mapping space $E'(i_u f_! x, i_u y)$.
In this way we get that the following square of spaces is cartesian:
\[
\begin{tikzcd}
  E'(i_u f_! x, i_u y)
  \ar[r]
  \ar[d]
  \ar[dr, phantom, "\lrcorner" very near start]
&
  E'(i_u x, i_u y)
  \ar[d]
\\
  C'(i c, i q y)
  \ar[r]
&
  C'(i q x, i q y) \rlap{.}
\end{tikzcd}
\]
Since every object of $E'$ is equivalent to one of the form $i_u y$, this means that $i_u g \co i_u x \to i_u f_! x$ is $q'$-cocartesian, where $g$ denotes the $q$-cocartesian map $x \to f_! x$.

We now claim that given $a, b : C$, $f : C'(i a, i b)$, and $x : \fib_{q'}(i a)$ such that $x$ lies in the image of $\fib_q(a) \hookrightarrow \fib_{q'}(ia)$, there is $y : \fib_{q'}(i b)$ and a $q'$-cocartesian lift $x \to y$ of $f$, and moreover, $y$ lies in the image of $\fib_q(b) \hookrightarrow \fib_{q'}(i b)$.
Since $q'$-cocartesian maps are closed under composition, it suffices to consider the cases where $f$ is either of the form $i g$, or of the form $(ig)^{-1}$ with $g$ in $W$.
The first case was dealt with above, so we consider the case of $f = (i g)^{-1}$.
Since $f$ is invertible in this case, there is trivially a $q'$-cocartesian lift, given by an isomorphism $x \simeq y$.
The fact that $y$ lies in the image of $\fib_q(b) \hookrightarrow \fib_{q'}(i b)$ can be seen by considering the square
\[
\begin{tikzcd}
  \fib_q(b)
  \ar[r, hook]
  \ar[d, "g_!"']
&
  \fib_{q'}(i b)
  \ar[d, "(i g)_!"]
\\
  \fib_q(a)
  \ar[r, hook]
&
  \fib_{q'}(i a) \rlap{,}
\end{tikzcd}
\]
in which the vertical maps are invertible, and $x : \fib_{q'}(i a)$ is the image of $y$ under $(i g)_!$.
Now let $a : C$ and $x : \fib_{q'}(i a)$ be arbitrary.
Since $i_u \co E \to E'$ is surjective, there is $y : E$ with $g \co i_u y \simeq x$.
In particular, $q' g$ gives us an isomorphism $f \co i q y \simeq i a$, and $g$ is precisely the $q'$-cocartesian lift of $f$.
By the claim above, this means that $x$ lies in the image of $\fib_q(a) \hookrightarrow \fib_{q'}(i a)$, so this functor is surjective.
This means that the gap map $E \to E' \times_{C'} C$ is surjective and hence invertible.
This also means that the claim above, together with surjectivity of $i \co C \to C'$, establishes that $q'$ is a cocartesian fibration.
\end{proof}

\begin{remark}
Conduch\'e fibrations enjoy a straightening--unstraightening correspondence which generalises that for (co)cartesian fibrations~\cite{ayala20}, and it is reasonable to ask if our methods apply also to Conduch\'e fibrations.
Indeed, the main ingredient, \cref{mapping-spaces-conduche}, applies to Conduch\'e fibrations.

However, straightening of Conduch\'e fibrations is inherently more complicated than straightening of cocartesian fibrations, since Conduch\'e fibrations are classified by \emph{flagged} functors (\ie, not just functors in the ordinary sense) into the flagged category of categories and profunctors between them.
Essentially this is because not every invertible profunctor is representable.

A concrete manifestation of this is that \cref{q-prime-cocart} does \emph{not} apply to Conduch\'e fibrations.
Indeed, let $F \co A \to B$ be some functor which defines an invertible profunctor but not an equivalence of categories. For example $A$ could be the walking idempotent
and $B$ the walking section-retraction pair.
We build a Conduch\'e fibration $q \co E \to \Delta^3$ as follows: the fibres over the objects $0, 1, 2, 3$ of $\Delta^3$ are $A$, $B$, $A$, $B$, respectively, and the profunctors between fibres induced by the generating morphisms $0 \to 1$, $1 \to 2$, $2 \to 3$ of $\Delta^3$ are given by $F$, $F^{-1}$, and $F$, respectively.
It can be seen that $q$ inverts the morphisms $0 \to 2$ and $1 \to 3$.
The localisation of $\Delta^3$ at these two morphisms is the terminal category $1$.
Since the fibres $A$ and $B$ over $0, 1 \co \Delta^3$ are not equivalent, it cannot be that $q \co E \to \Delta^3$ is the pullback of a Conduch\'e fibration $E' \to 1$.
\end{remark}

\begin{theorem}[label=descent-localisation]
Let $C$ be a category $W$ a collection of morphisms, and let $i \co C \to C[W^{-1}]$ be the corresponding localisation.
Then the (wild) functor $i^* \co \cocartfib(C[W^{-1}]) \to \cocartfib(C)$ given by base change along $i$ is fully faithful, and its image is spanned by cocartesian fibrations that invert $W$.
\end{theorem}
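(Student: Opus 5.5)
The plan is to build an inverse to $i^*$ on the subcategory of cocartesian fibrations that invert $W$, with \cref{q-prime-cocart} supplying the essential image, and then to prove full faithfulness using the universal property of localisation.

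First, the image of $i^*$ lands in fibrations that invert $W$. For a cocartesian fibration $p \co E' \to C[W^{-1}]$ and $f \in W$, the transport of $i^*p$ along $f$ is identified with the transport $(if)_!$ of $p$, by the functoriality recorded in \cref{cocartesian-transport}. Since $if$ is invertible, so is $(if)_!$. Conversely, \cref{q-prime-cocart} says that every $q \co E \to C$ inverting $W$ is of the form $i^* q'$ with $q' \co E[W_u^{-1}] \to C[W^{-1}]$ cocartesian. So the image is exactly as claimed.

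For full faithfulness, fix cocartesian fibrations $p_0 \co E_0 \to D$ and $p_1 \co E_1 \to D$, where $D = C[W^{-1}]$. I would show that restriction
\[ \map_{\cocartfib(D)}(E_0, E_1) \to \map_{\cocartfib(C)}(i^*E_0, i^*E_1) \]
is invertible, and I would do this by reducing to the case where $E_0$ is a localisation. Write $q = i^*p_0 \co i^*E_0 \to C$, which inverts $W$. The proof of \cref{q-prime-cocart} gives a cartesian square exhibiting $q$ as the pullback of $q'$, and the surjectivity argument there shows that the induced comparison $E_0' := (i^*E_0)[W_u^{-1}] \to E_0$ over $D$ is an equivalence. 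The reason is that both sides are cocartesian fibrations over $D$ whose pullbacks along $i$ agree, and the comparison is fibrewise an equivalence: for $x : D$ pick $c$ with $ic \simeq x$ (using surjectivity of $i$); on fibres over $ic$ both are $\fib_q(c)$. Here I use that the comparison is cocartesian, because $q'$-cocartesian morphisms are generated by images of $q$-cocartesian ones together with isomorphisms, and that cocartesian functors are detected fibrewise. So we may take $E_0 = (i^*E_0)[W_u^{-1}]$.

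Now a functor $E_0 \to E_1$ over $D$ is, by the universal property of the localisation, the same as a functor $i^*E_0 \to E_1$ over $D$ (equivalently over $C$ via $i$) that sends $W_u$ to isomorphisms. By the pullback property, such a functor is the same as a functor $i^*E_0 \to i^*E_1$ over $C$ sending $W_u$ to isomorphisms. A cocartesian functor automatically does this: a lift of $f \in W$ is sent to a cocartesian lift of $f$ in $i^*E_1$, whose image in $E_1$ is a cocartesian lift of the isomorphism $if$ and hence is invertible. It remains to check that cocartesianness matches on the two sides. A functor $E_0 \to E_1$ over $D$ is cocartesian iff it preserves the generating $q'$-cocartesian maps $i_u g$ with $g$ $q$-cocartesian, as identified in \cref{q-prime-cocart}, and this is iff the restricted functor is cocartesian, because cocartesianness in $i^*E_1$ is detected in $E_1$.

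I expect the main obstacle to be the bookkeeping that makes these identifications natural and compatible with the restriction map. In particular, I need the universal property of $E[W_u^{-1}]$ relative to the base, that is, for maps over $D$ rather than absolute maps. For that I would use the 2-categorical form of the localisation universal property together with the fact that slicing over $D$ is a pullback of mapping spaces.
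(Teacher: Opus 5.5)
Your proposal is correct and follows essentially the same route as the paper. \cref{q-prime-cocart} supplies the essential image, i.e.\ the invertible unit $q \simeq i^* q'$. Full faithfulness then comes from two ingredients: the universal property of $E[W_u^{-1}]$ relative to $C[W^{-1}]$, and the decomposition of morphisms of $C[W^{-1}]$ into images of $i$ and isomorphisms, which you use to check that cocartesianness matches on both sides.

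The only difference is packaging. The paper shows that $q \mapsto q'$ is a left adjoint with invertible unit and concludes using conservativity of $i^*$, which follows from surjectivity of $i$. You instead prove directly, by a fibrewise argument, that the counit $(i^*E_0)[W_u^{-1}] \to E_0$ is invertible, and that argument is exactly the concrete content of the paper's conservativity step.
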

\begin{proof}
Since $i$ is surjective, $i^*$ is conservative.
Now let $q \co E \to C$ be a cocartesian fibration that inverts $W$.
By \cref{q-prime-cocart}, we obtain a cocartesian fibration $q' \co E[W_u^{-1}] \to C[W^{-1}]$ together with an isomorphism $\eta \co q \to i^* q'$.
We claim that $(q', \eta)$ is initial in $q \downarrow i^*$.
This means that for any other cocartesian fibration $p \co Y \to C[W^{-1}]$, the space of functors of cocartesian fibrations $q' \to p$ is equivalent to the space of functors of cocartesian fibrations $q \to i^*p$, by the map $F \mapsto i^*(F) \eta$.

By construction, the space $\map_{C[W^{-1}]}(q', p)$ of \emph{all} functors of categories over $C[W^{-1}]$, not necessarily preserving cocartesian maps, is equivalent to the subspace of
$\map_C(q, i^*p)$ spanned by those functors $F \co E \to i^* Y$ such that the composite $E \to Y$ inverts every morphism of $W_u$.
If a functor $F \co E \to i^* Y$ over $C$ preserves cocartesian maps, then so does $E \to Y$, so morphisms in $W_u$ are sent to cocartesian lifts of isomorphisms in $Y$, and so $E \to Y$ inverts $W_u$.
It remains only to show that the induced functor $F' \co E[W_u^{-1}] \to Y$ also preserves cocartesian maps.
This follows from the fact that every morphism in $C[W^{-1}]$ can be written as a composite of morphisms in the image of $i$ (whose cocartesian lifts are preserved by $F'$ since $F$ preserves cocartesian maps) and isomorphisms (whose cocartesian lifts are invertible and so preserved by every functor).

Thus, $q \mapsto q'$ determines a left adjoint to the functor from $\cocartfib(C[W^{-1}])$ to the full subcategory of $\cocartfib(C)$ spanned by cocartesian fibrations that invert $W$.
Since the unit of this adjunction is invertible and the right adjoint is conservative, this is an equivalence of (wild) categories.
\end{proof}

\section{Other descent statements}

In this section we prove descent statements for cocartesian fibrations over cocommas, sequential colimits, and groupoids.

\begin{lemma}[label=descent-cocomma]
Given a cocomma lax square of categories
\[
\begin{tikzcd}
  A
  \ar[r, "g"]
  \ar[d, "f"']
  \twocellflip{dr}
&
  C
  \ar[d, "k"]
\\
  B
  \ar[r, "h"']
&
  D \rlap{,}
\end{tikzcd}
\]
the corresponding lax square of (wild) categories of cocartesian fibrations
\[
\begin{tikzcd}
  \cocartfib(D)
  \ar[r, "h^*"]
  \ar[d, "k^*"']
  \twocell{dr}
&
  \cocartfib(B)
  \ar[d, "f^*"]
\\
  \cocartfib(C)
  \ar[r, "g^*"']
&
  \cocartfib(A)
\end{tikzcd}
\]
is a comma lax square.
\end{lemma}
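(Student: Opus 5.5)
Unwinding the claim: an object of the comma category $g^* \downarrow f^*$ (with the direction matching the lax square) is a triple consisting of $E_B \in \cocartfib(B)$, $E_C \in \cocartfib(C)$, and a cocartesian functor $\varphi \co g^* E_C \to f^* E_B$ over $A$. Note that $\varphi$ goes in the direction of the 2-cell $f \to g$ only after the appropriate transport. The lax square sends $E \to D$ to $(h^*E, k^*E)$, together with the map induced by cocartesian transport (\cref{cocartesian-transport}) along the 2-cell between $kg$ and $hf$. We must show this functor $\Phi$ is an equivalence of wild categories. The plan is to construct an explicit inverse by gluing, and then to check full faithfulness and surjectivity separately.

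\textbf{Construction of the inverse.} Since $D$ is the pushout of $B \sqcup C \leftarrow A \sqcup A \to A \times \II$, I would first reduce to descent along the pushout square defining the cocomma. This needs two ingredients.

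\begin{parts}
\item Descent for cocartesian fibrations over the pushout.
\item A description of cocartesian fibrations over $A \times \II$.
\end{parts}

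For (2), a cocartesian fibration over $A \times \II$ should be the same as two cocartesian fibrations $E_0, E_1$ over $A$ together with a cocartesian functor $E_0 \to E_1$ over $A$; the total space is the cocomma/collage of that functor. This makes the comma structure visible. Concretely, given data $(E_B, E_C, \varphi)$, define $E$ as the pushout of categories
\[
E_B \sqcup E_C \leftarrow f^*E_B \sqcup g^*E_C \to \mathrm{Cyl}(\varphi),
\]
where $\mathrm{Cyl}(\varphi)$ is the cocomma $\cocomma{\id}{\varphi}$ of $\varphi$, lying over $A \times \II$. The result maps to $D$.

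\textbf{Showing $E \to D$ is cocartesian with the right fibres.} This is the heart of the argument and, I expect, the main obstacle: pushouts of categories are hard to control. My first approach would be to use descent for left fibrations. Mapping spaces of $E$ can be computed via representable presheaves, and by the cocomma analysis in the preliminaries the inclusions of $E_B$ and $E_C$ into $E$ are fully faithful. Moreover, the mapping spaces from objects $x$ of $E_B$ to objects $y$ of $E_C$ are given by the left Kan extension formula $g_! f^*$. In the fibre, this identifies $E(x,y)$ with $\verts{x \downarrow f^*E_B \downarrow y}$ computed through $\varphi$, and there are no maps from $E_C$ back to $E_B$.

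From this description, one checks the following:

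\begin{itemize}
\item $h^*E \simeq E_B$ and $k^*E \simeq E_C$, since the inclusions are fully faithful and the objects match by extensivity on objects.
\item The map $x \to \varphi(x)$, for $x$ in $f^*E_B$, is cocartesian over the generating 2-cell. The key point is that the coreflective subcategory of $p$-cocartesian maps collapses $\verts{x\downarrow f^*E_B\downarrow y}$ to a mapping space in $E_C$, exactly as in \cref{conduche-of-cocartesian}.
\end{itemize}

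Cocartesian lifts of general morphisms of $D$ then exist, because every morphism of $D$ is a composite of morphisms from $B$ or $C$ and components of the 2-cell. The mapping space formula shows any map $b \to c$ factors as $h b \to hf a \to kg a \to k c$.

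\textbf{Equivalence.} The unit $(E_B,E_C,\varphi) \to \Phi(E)$ is invertible by the fibre identifications above. For full faithfulness, cocartesian functors out of $E$ correspond, by the pushout universal property, to compatible functors on $E_B$, $E_C$ and $\mathrm{Cyl}(\varphi)$. Cocartesianness is detected on generating morphisms, as in the proof of \cref{descent-localisation}. A map from $\mathrm{Cyl}(\varphi)$ compatible with the ends is exactly a 2-cell, that is, a commuting square in the comma. Conservativity of $\Phi$ follows from surjectivity of $B \sqcup C \to D$. Together these give an adjoint equivalence, hence the lax square is a comma square.
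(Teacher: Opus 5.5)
\textbf{Gap: the fibre description of $E(x,y)$.} The problem is the step where you assert that $x \to \varphi(x)$ is cocartesian because the coreflective subcategory ``collapses $\verts{x\downarrow f^*E_B\downarrow y}$ to a mapping space in $E_C$, exactly as in \cref{conduche-of-cocartesian}''. Write $p \co E_B \to B$, $q \co E_C \to C$ and $r \co E \to D$.

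In \cref{conduche-of-cocartesian} the coreflective subcategory lies over the \emph{space} $B(px,b)$, so one may simply take fibres. Here $x \downarrow^{\cocart} f^*E_B \downarrow y$ lies over the \emph{category} $px \downarrow A \downarrow qy$. Its objects are $a : A$ with $m \co px \to fa$ and $n \co ga \to qy$, and its localisation is $D(hpx,kqy)$. So $E(x,y)$ is a colimit over a non-groupoid, not a single mapping space of $E_C$. Fibres of a map of localisations are in general not localisations of fibres.

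What you actually need is that the fibre of $E(x,y) \to D(hpx,kqy)$ over the class of $(a,m,n)$ is $(E_C)_{qy}(n_!\varphi(m_!x,a),y)$. This is what makes the lifts of the 2-cell components $r$-cocartesian. It is also what upgrades lifts of morphisms of $B$ from \emph{locally} cocartesian (all that $h^*E \simeq E_B$ gives) to $r$-cocartesian, which your factorisation $hb \to hfa \to kga \to kc$ tacitly requires.

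The paper obtains this by showing that the projection $x \downarrow^{\cocart} f^*E_B \downarrow y \to px \downarrow A \downarrow qy$ is a Kan fibration (both a left and a right fibration). Then by descent its square with the localisations is cartesian. The Kan fibration property comes down to $(n\,g(l))_!\varphi(m_!x,a) \simeq n_!\varphi((f(l)m)_!x,a')$ for $l \co a \to a'$. This holds precisely because $\varphi$ sends the cocartesian map $(m_!x,a) \to ((f(l)m)_!x,a')$ to a $q$-cocartesian lift of $g(l)$.

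\textbf{Cocartesianness of $\varphi$ is never used.} Each step you write makes equal sense for an arbitrary functor $\varphi$ over $A$, and there the conclusion is false. Take $A = C = \II$, $B = 1$ and $g = \id$, so that $D$ is the cone on $\II$, namely $\Delta^2$. Let $E_B = X$ and let $E_C$ correspond to $G \co Y_0 \to Y_1$. Let $\varphi$ correspond to $\varphi_0 \co X \to Y_0$, $\varphi_1 \co X \to Y_1$ and a non-invertible $\theta \co G\varphi_0 \to \varphi_1$.

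For $x : X$ and $y : Y_1$, the projection above is a right fibration over $\II$, with fibre $Y_1(G\varphi_0 x, y)$ over $0$ and $Y_1(\varphi_1 x, y)$ over $1$. Hence $E(x,y) \simeq Y_1(G\varphi_0 x,y)$. The lift $x \to \varphi_1 x$ of the 2-cell component at $1 : A$ is then $r$-cocartesian if and only if $\theta_x$ is invertible.

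\textbf{Smaller points.} In your first paragraph $\varphi$ should go from $f^*E_B$ to $g^*E_C$, as you use it later. Your ingredient (1), descent along arbitrary pushouts, is at least as hard as descent along localisations and is never actually used, so it is best dropped. Once the fibre description is established, your remaining steps agree with the paper's proof: invertible unit, conservativity from joint surjectivity, and the left adjoint from the universal property of the cocomma.
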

\begin{proof}
Since $h$ and $k$ are jointly surjective, $h^*$ and $k^*$ are jointly conservative, and so the functor $R \co \cocartfib(D) \to f^* \downarrow g^*$ is conservative.
We claim that it has a left adjoint and that the unit is invertible; it then follows that it is an equivalence of (wild) categories.

An object of $f^* \downarrow g^*$ is given by cocartesian fibrations $p \co Y \to B$ and $q \co Z \to C$ together with a cocartesian functor $f^* Y \to g^* Z$ over $A$.
This data is described by the diagram
\[
\begin{tikzcd}
  Y
  \ar[d, "p"']
&
  f^* Y
  \ar[l, "u"']
  \ar[r, "v"]
  \ar[d]
  \ar[dl, phantom, "\llcorner" very near start]
&
  Z
  \ar[d, "q"]
\\
  B
&
  A
  \ar[l, "f"]
  \ar[r, "g"']
&
  C \rlap{.}
\end{tikzcd}
\]
Let $W$ denote the cocomma of $Y \xleftarrow{u} f^*Y \xrightarrow{v} Z$, and denote the inclusions into $W$ by $s \co Y \hookrightarrow W$ and $t \co Z \hookrightarrow W$.
We claim that the induced functor $r \co W \to D$ is a cocartesian fibration.

There are three kinds of morphisms in $D$: those coming from $B$, those coming from $C$, and ``heteromorphisms'', of the form $hb \to kc$.
The base changes of $r$ along $h$ and $k$ are $p$ and $q$, respectively, essentially because $h$ and $k$ are fully faithful.
This shows that $r$ has cocartesian lifts of morphisms coming from $C$, with $t \co Z \hookrightarrow W$ sending $q$-cocartesian maps to $r$-cocartesian maps.
It also shows that $r$ has \emph{locally cocartesian} lifts of morphisms coming from $B$.
It remains to show that it has cocartesian lifts of heteromorphisms $h c \to k c$, and that these are stable under pre- and postcomposition with morphisms coming from $B$ or $C$.

Consider objects $y : Y$ and $z : Z$.
Then the mapping space $W(s y, t z)$ is the localisation of $y \downarrow f^* Y \downarrow z$, and the mapping space $D(h p y, k q z)$ is the localisation of $p q \downarrow A \downarrow q z$.
Consider the coreflective subcategory $y \downarrow^\cocart f^*Y \downarrow Z$ of $y \downarrow f^*Y \downarrow z$ spanned by objects whose underlying morphism in $Y$ is $p$-cocartesian.
Then we can equivalently compute $D(h p y, k q z)$ as the localisation of $y \downarrow^\cocart f^* Y \downarrow Z$.

Now consider the functor $F \co (y \downarrow^\cocart f^*Y \downarrow Z) \to (pq \downarrow A \downarrow qz)$.
We claim that $F$ is a \emph{Kan fibration}, \ie, that it is both a left fibration and a right fibration.
The fibre of $F$ over an object given by $a : A$ with $m \co p q \to f a$ and $n \co g a \to q z$ is the mapping space $Z_{q z}(n_! v(m_! y, a), z)$.
Showing that $F$ is a Kan fibration boils down to the fact that, given $a, a' : A$ with maps $m \co p q \to f a$ and $l \co a \to a'$ and $n \co g a' \to q z$, we have $(n g(l))_! v(m_! y, a) \simeq n_! v((f(l) m)_! y, a')$, since $v$ sends the cocartesian map $(m_! y, a) \to ((f(l) m)_! y, a')$ to a $q$-cocartesian lift of $g(l)$.

Admitting that $F$ is a Kan fibration, descent implies that the square of categories
\[
\begin{tikzcd}
  y \downarrow^\cocart f^*Y \downarrow z
  \ar[r]
  \ar[d]
&
  W(s y, t z)
  \ar[d]
\\
  p y \downarrow A \downarrow q z
  \ar[r]
&
  D(h p y, k q z) \rlap{.}
\end{tikzcd}
\]
is cartesian.
Since the bottom map is surjective (indeed, a localisation), this means that every fibre of $W(s y, t z) \to D(h p y, k q z)$ is of the form $Z_{q z}(n_! v(m_! y , a), z)$.
In particular, heteromorphisms have cocartesian lifts: these correspond to isomorphisms $n_! v(m_!y,a) \simeq n_! v(m_! y, a)$.
From this description, it is clear that they are stable under precomposition with locally cocartesian lifts of morphisms from $B$.
This finishes the proof that $r$ is a cocartesian fibration.

The inclusions into the cocomma $W$ define a map from $Y \xleftarrow{u} f^*Y \xrightarrow{v} Z$ to $Rr$ in $f^* \downarrow g^*$.
Moreover, this map is invertible, since $Y \simeq h^* W$ and $Z \simeq k^* W$.
Moreover, this defines left adjoint to $R$, essentially by the universal property of $W$ as a cocomma.
Since $R$ is conservative and has a left adjoint with invertible unit, $R$ is invertible.
\end{proof}

\begin{lemma}[label=descent-sequential]
Given a sequential colimit diagram of categories
\[
\begin{tikzcd}
  C_0
  \ar[r, "f_0"]
  \ar[drr, "i_0"']
&
  C_1
  \ar[r, "f_1"]
  \ar[dr, "i_1"]
&
  \cdots
\\&&
  C_\infty \rlap{,}
\end{tikzcd}
\]
the corresponding diagram of (wild) categories of cocartesian fibrations
\[
\begin{tikzcd}
  \cocartfib(C_\infty)
  \ar[dr, "i_1^*"']
  \ar[drr, "i_0^*"]
\\
  \cdots
  \ar[r]
&
  \cocartfib(C_1)
  \ar[r]
&
  \cocartfib(C_0)
\end{tikzcd}
\]
is a sequential limit diagram.
\end{lemma}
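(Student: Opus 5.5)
We follow the pattern of the proof of \cref{descent-cocomma}. Write $R \co \cocartfib(C_\infty) \to \lim_n \cocartfib(C_n)$ for the comparison functor. An object of the limit is a sequence of cocartesian fibrations $p_n \co E_n \to C_n$ with equivalences $E_n \simeq f_n^* E_{n+1}$ over $C_n$. In particular we get a sequence of categories $E_0 \to E_1 \to \cdots$ lying over the sequence $C_0 \to C_1 \to \cdots$. We set $E_\infty \coloneqq \colim_n E_n$ with induced functor $p_\infty \co E_\infty \to C_\infty$. The plan has three steps:
\begin{itemize}
\item show that $p_\infty$ is a cocartesian fibration;
\item show that $E_n \simeq i_n^* E_\infty$, so that $R$ applied to $p_\infty$ recovers the given sequence;
\item check that $(p_n)_n \mapsto p_\infty$ is left adjoint to $R$ with invertible unit, and that $R$ is conservative.
\end{itemize}

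Conservativity of $R$ holds because the $i_n$ are jointly surjective; this uses that $\ob$ commutes with sequential colimits. The left adjoint comes from the universal property of $E_\infty$ as a colimit, exactly as in the cocomma case. A compatible family of cocartesian functors $E_n \to i_n^* Y$ induces a functor $E_\infty \to Y$ over $C_\infty$. This functor is cocartesian once we know every morphism of $C_\infty$ comes from some $C_n$, and that cocartesian lifts in $E_\infty$ are images of cocartesian lifts in some $E_n$.

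The main work is mapping spaces. Take $x, y : E_0$; the general case reduces to this by reindexing the sequence. By the description of mapping spaces of sequential colimits, $E_\infty(x, y)$ is the colimit of the spaces $E_n(x, y)$, and $C_\infty(p x, p y)$ is the colimit of the spaces $C_n(p x, p y)$. For the second step we need the squares
\[
\begin{tikzcd}
  E_n(x,y) \ar[r] \ar[d] & E_{n+1}(x,y) \ar[d] \\
  C_n(px,py) \ar[r] & C_{n+1}(px,py)
\end{tikzcd}
\]
to be cartesian. This holds because $E_n \to f_n^*E_{n+1}$ is an equivalence and $f_n^* E_{n+1}$ is a pullback. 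By descent for sequential colimits in spaces, the square comparing $E_n(x, y) \to E_\infty(x, y)$ over $C_n \to C_\infty$ is then cartesian. Hence the gap map $E_n \to i_n^* E_\infty$ is fully faithful. It is surjective on objects since $\ob$ commutes with sequential colimits and fibres over objects of $C_n$ stabilise. So $E_n \simeq i_n^* E_\infty$.

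For the first step, let $x : E_\infty$ and $g : C_\infty(p_\infty x, c)$. We can choose $n$ such that $x$, $c$ and $g$ all come from stage $n$, as $x \in E_n$, $c \in C_n$ and $g_n$. We then take a $p_n$-cocartesian lift $\bar g \co x \to g_{n!} x$ in $E_n$. It remains to show its image in $E_\infty$ is $p_\infty$-cocartesian. For each target $z$ (again from some stage), the defining square is the sequential colimit of the corresponding squares at stages $m \ge n$. These are cartesian because $p_m$-cocartesianness is preserved by the transition maps $E_m \simeq f_m^* E_{m+1} \to E_{m+1}$. Sequential colimits commute with pullbacks in spaces, so the colimit square is cartesian. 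I expect the main obstacle to be the bookkeeping of choosing stages and showing the resulting lifts and equivalences are independent of the stage; the mathematical content reduces to filteredness of sequential colimits.
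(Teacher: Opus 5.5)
Your proposal is correct and is essentially the paper's argument written out in full. The paper first records that sequential colimits of categories satisfy descent ($\cat/C_\infty \simeq \lim_n \cat/C_n$, using that $\II$ detects equivalences and $\map(\II,-)$ commutes with sequential colimits), and then checks, using that sequential colimits commute with pullbacks in spaces, that being a cocartesian fibration or a cocartesian functor over $C_\infty$ is detected on the $C_n$. Your second step is the unit half of that descent statement, phrased with object and mapping spaces instead of $\map(\II,-)$; your first step is the stagewise detection of cocartesian lifts; and in place of the counit half of descent you use conservativity of $R$ together with a left adjoint with invertible unit, following \cref{descent-cocomma}.
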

\begin{proof}
Since $\II$ detects equivalences of categories and $\map(\II, -)$ commutes with sequential colimits, sequential colimits of categories satisfy descent, \ie, $\cat / C_\infty$ is the sequential limit of $\cat / C_0 \xleftarrow{f_0^*} \cat / C_1 \cdots$.
Again using compactness of $\II$ and the fact that sequential colimits commute with pullback in spaces, one can check that a category over $C_\infty$ is a cocartesian fibration as soon as its pullback to $C_n$ is a cocartesian fibration for every $n$, and similarly with functors of cocartesian fibrations over $C_\infty$.
\end{proof}

\begin{lemma}[label=descent-groupoid]
Given a groupoid $X$, the (wild) functor
\[
  \cocartfib(X) \to \cat^X
\]
which computes the fibres of a cocartesian fibration is an equivalence of (wild)
categories.
\end{lemma}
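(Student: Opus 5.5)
The plan is to reduce to extensivity, which says that $\cat / X \to \cat^X$ is invertible. The point is that, over a groupoid, cocartesian fibrations form a full subcategory of $\cat / X$ that turns out to be everything. Cocartesian functors also form a wide subcategory of that full subcategory, and this too turns out to be everything. So it suffices to show two things, for an arbitrary functor $p \co E \to X$ with $X$ a groupoid:
\begin{conditions}
\item $p$ is a cocartesian fibration;
\item every functor over $X$ between such fibrations preserves cocartesian maps.
\end{conditions}
Then $\cocartfib(X) = \cat / X$, and the fibre functor agrees with the extensivity equivalence.

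For the first claim, I would show that every morphism of $E$ is $p$-cocartesian. Let $g \co x \to y$ in $E$; we must show that the square
\[
E(y,z) \to E(x,z), \qquad X(py,pz) \to X(px,pz)
\]
is cartesian. Since $X$ is a groupoid, $pg$ is invertible and the bottom map is an equivalence. So it suffices to show that $g$ is invertible in $E$, which I expect to be the main point.

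To see this, use extensivity: $E \simeq \bigsqcup_{a : X} E_a$ over $X \simeq \bigsqcup_X 1$. A morphism in a coproduct lies in a single summand, because $\II$ is connected and the arrow category of a coproduct is the coproduct of arrow categories, again by extensivity applied to $\arr$. Hence $g$ is a morphism in a single fibre $E_a$. The fibre-transport argument then shows that $g$ is $p$-cocartesian. Concretely, $E(y,z)$ decomposes over $X(py,pz)$, and for $pz \neq pa$ both sides are empty. More cleanly, the square is equivalent to
\[
E_a(y,z) \times X(a,pz) \to E_a(x,z) \times X(a,pz)
\]
when $z$ is in the same component, which is not obviously an equivalence. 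So this approach needs correcting: not every morphism is cocartesian, only the invertible lifts of morphisms in $X$ are.

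Restating the first claim properly, cocartesian lifts of an isomorphism $f \co px \to b$ exist: take $y$ to be $x$ transported along $f$ via the identification $X \simeq \bigsqcup 1$, with the lift $x \to y$ an isomorphism of $E$. Invertible morphisms are always cocartesian, since the square has invertible horizontal maps. Hence the gap map to $\arr(X) \times_X E$ is surjective, and $p$ is a cocartesian fibration.

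For the second claim, cocartesian lifts in $X$ are exactly lifts of isomorphisms, and by uniqueness of cocartesian lifts these are exactly the invertible lifts. Any functor over $X$ preserves isomorphisms, hence preserves cocartesian maps. Combined with extensivity, this gives the equivalence. The main obstacle is making the first claim rigorous, namely that cocartesian lifts exist, since this is where surjectivity of the gap map has to be checked honestly.
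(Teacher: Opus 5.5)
Your final argument is correct and is essentially the paper's proof: every functor into a groupoid $X$ is a cocartesian fibration whose cocartesian maps are exactly the isomorphisms, which every functor over $X$ preserves, so $\cocartfib(X)$ is all of $\cat / X$ and the claim reduces to extensivity. Delete the opening detour. The claim that every morphism of $E$ is cocartesian is false, since over an isomorphism of $X$ a map is cocartesian exactly when it is invertible, but you retract it, and the final argument does not depend on it.
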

\begin{proof}
Every category over $X$ is a cocartesian fibration, with cocartesian maps given by the isomorphisms.
Therefore, $\cocartfib(X)$ is just the (wild) category of categories over $X$.
The claim thus reduces to extensivity of the (wild) category of categories.
\end{proof}

\section{A directed join construction}

In this section, we describe a model-dependent way of decomposing a general category into simpler pieces.
More precisely, for any functor $f \co A \to B$, we give a sequential colimit formula for the full image of $f$.
This is a directed generalisation of the join construction~\cite{rijke17}, which deals with the case where $A$ and $B$ are groupoids.
We are primarily interested in the case where $f$ is a \emph{core inclusion} $C^\simeq \to C$.

\begin{construction}[label=directed-join]
Consider categories $A_0$, $A_1$, $B$ with functors $f_0 \co A_0 \to B$ and $f_1 \co A_1 \to B$.
We define $A_0 \vec{*}_B A_1$ to be the category freely generated by functors $i_0 \co A_0 \to A_0 \vec{*}_B A_1$ and $i_1 \co A_1 \to A_0 \vec{*}_B A_1$ together with a natural transformation $\alpha \co i_0 \dom \to i_1 \cod$ of functors $f_0 \downarrow f_1 \to A_0 \vec{*}_B A_1$, subject to the condition that the restriction of $\alpha$ along $A_0 \times_B A_1 \hookrightarrow f_0 \downarrow f_1$ is invertible.
This category comes with a functor $A_0 \vec{*}_B A_1 \to B$, which we denote by $i_0 \vec{*} i_1$:
\[
\begin{tikzcd}
  A_0 \times_B A_1
  \ar[r, hook]
&
  f_0 \downarrow f_1
  \ar[r, "\cod"]
  \ar[d, "\dom"']
  \twocellflip{dr}["\alpha", pos=0.3]
&
  A_1
  \ar[d, "i_1"]
\\&
  A_0
  \ar[r, "i_0"']
&
  A_0 \vec{*}_B A_1
  \ar[r, "i_0 \vec{*} i_1"]
&
  B \rlap{.}
\end{tikzcd}
\]
\end{construction}
Explicitly, we can build $A_0 \vec{*}_B A_1$ as the localisation of the cocomma of $A_0 \xleftarrow{\dom} f_0 \downarrow f_1 \xrightarrow{\cod} A_1$ at the collection of morphisms given by applying the generating 2-cell to an object of $f_0 \downarrow f_1$ whose underlying morphism in $B$ is invertible.

\begin{lemma}[label=join-correctness]
Let $A$, $B$, $X_0$ be categories and let $f \co A \to B$ and $g_0 \co X_0 \to B$ be functors.
For every $n : \mathbb N$, a category $g_n \co X_n \to B$ over $B$ is given by taking $X_{n+1} \coloneqq A \vec{*}_B X_n$, $g_{n+1} \coloneqq f \vec{*} g_n$.
Let $X_\infty$ be the colimit of the sequence $X_0 \xrightarrow{i_1} X_1 \xrightarrow{i_1} \cdots$.
Then assuming the image of $g_0$ is contained in the image of $f$, we have that $X_\infty \xrightarrow{g_\infty} B$ is fully faithful, and exhibits $X_\infty$ as the full image of $f$.
\end{lemma}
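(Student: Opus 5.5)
We argue as in Rijke's undirected join construction: the map $g_\infty$ on mapping spaces is a sequential colimit of maps which, at every finite stage, factor through $B(g-, g-)$ up to a shift of one stage. Write $j_n \co X_n \to X_{n+1}$ for the transition functor (the $i_1$ of \cref{directed-join}) and $k_n \co A \to X_{n+1}$ for the other inclusion $i_0$. Then $g_{n+1} j_n \simeq g_n$ and $g_{n+1} k_n \simeq f$.

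\emph{Surjectivity onto the full image.} By construction, the objects of $X_{n+1}$ are jointly covered by the images of $j_n$ and $k_n$. Since $\ob$ commutes with sequential colimits, every object of $X_\infty$ therefore comes from $X_0$ or from $A$ at some stage. Using the hypothesis on $g_0$, its image under $g_\infty$ then lies in the image of $f$. Conversely, every $f a$ is hit via $k_0$. So once $g_\infty$ is shown to be fully faithful, it exhibits $X_\infty$ as the full image of $f$.

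\emph{Key step: every object becomes isomorphic to one from $A$.} By induction, the image of each $g_n$ is contained in the image of $f$: for $n = 0$ this is the hypothesis, and $X_{n+1}$ is covered by $X_n$ and $A$. Now let $x : X_n$ and choose $a : A$ with $\theta \co f a \simeq g_n x$. Then $(a, x, \theta)$ is an object of $A \times_B X_n$, so the generating 2-cell of \cref{directed-join} gives an isomorphism $\alpha_{(a,x,\theta)} \co k_n a \simeq j_n x$ in $X_{n+1}$.

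\emph{Construction of the maps $\sigma_n$.} Let $x, y : X_n$, and fix $a$ and $\theta$ for $x$ as above. For $\varphi : B(g_n x, g_n y)$, the triple $(a, y, \varphi\theta)$ is an object of $f \downarrow g_n$. We define
\[
  \sigma_n(\varphi) \coloneqq \alpha_{(a,y,\varphi\theta)} \circ \alpha_{(a,x,\theta)}^{-1} \co j_n x \to j_n y ,
\]
which gives a map $\sigma_n \co B(g_n x, g_n y) \to X_{n+1}(j_n x, j_n y)$. We then check two identities.
\begin{itemize}
\item $g_{n+1} \circ \sigma_n \simeq \id$. This holds because $g_{n+1} = f \vec{*} g_n$ sends $\alpha_{(a,y,\psi)}$ to $\psi$ by construction, so $\sigma_n(\varphi)$ is sent to $\varphi\theta\theta^{-1} = \varphi$.
\item $\sigma_n \circ g_n \simeq j_n$ on $X_n(x, y)$. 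For $u \co x \to y$, naturality of $\alpha$ along the morphism $(\id_a, u) \co (a, x, \theta) \to (a, y, g_n(u)\theta)$ of $f \downarrow g_n$ gives $j_n(u) \circ \alpha_{(a,x,\theta)} \simeq \alpha_{(a,y,g_n(u)\theta)}$.
\end{itemize}
Neither identity requires computing mapping spaces of the localised cocomma; both follow from the free universal property of \cref{directed-join} alone.

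\emph{Conclusion.} For $x, y : X_m$, the mapping space $X_\infty(x, y)$ is the sequential colimit of the spaces $X_n(x, y)$ for $n \geq m$, with $x$ and $y$ transported along the $j$'s. The maps $g_n$ form a map from this sequence to the constant sequence on $B(g x, g y)$. The maps $\sigma_n$ supply diagonal maps $B \to X_{n+1}$, and the two identities above say that the $g_n$ and $\sigma_n$ interleave into a single sequence whose colimit can be computed along either cofinal subsequence. Hence the induced map on colimits $X_\infty(x, y) \to B(g_\infty x, g_\infty y)$ is invertible, so $g_\infty$ is fully faithful.

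The main obstacle is to make this interleaving argument coherent, and in particular to verify that the $\sigma_n$ are compatible with the transition maps, i.e. that $j_{n+1} \sigma_n \simeq \sigma_{n+1}$ on the relevant terms. My first attempt would be to derive this from the second identity at stage $n+1$, together with the fact that the choice of $(a, \theta)$ ranges over a space that becomes contractible after transport. If that proves awkward, I would instead show the standard fact that a map of sequences with retraction-type diagonal fillers induces an equivalence on sequential colimits, using only the two triangle identities at each stage.
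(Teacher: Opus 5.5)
Your $\sigma_n$ and your two identities are, pointwise in $y$, exactly the paper's diagonal map (the one induced by $H'$) and its two commuting triangles, so the construction is right. The gap is in the conclusion.

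Interleaving the $g_n$ and $\sigma_n$ and passing to either cofinal subsequence does give \emph{an} equivalence $X_\infty(x,y) \simeq B(g_\infty x, g_\infty y)$. But that equivalence is the map induced by the cocone whose squares are the pastings of your two triangle homotopies. The action of $g_\infty$ on morphisms is instead induced by the cocone whose squares are the canonical witnesses of $g_{n+1} j_n \simeq g_n$. Identifying the two needs a 2-dimensional coherence between these witnesses, and you do not prove it.

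The compatibility you single out, $j_{n+1}\sigma_n \simeq \sigma_{n+1} \circ (\text{transition})$, is not the real issue. It follows at once from your identities, since $\sigma_{n+1} \circ (\text{transition}) \simeq \sigma_{n+1} g_{n+1} \sigma_n \simeq j_{n+1} \sigma_n$, and it does not let you compare the two induced maps.

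Your fallback ``standard fact'' is false as stated, because homotopies chosen stage by stage need not assemble on the colimit. Take $P_m = K(\mathrm{Sym}\{1,\dots,m\},1)$ and $Q_m = K(\mathrm{Sym}\{0,\dots,m\},1)$. Let the transition maps and $h_m \co P_m \to Q_m$ be induced by the evident inclusions, and let $\sigma_m \co Q_m \to P_{m+1}$ be induced by conjugation with the shift $i \mapsto i+1$. Then $\sigma_m h_m$ and $h_{m+1}\sigma_m$ differ from the transition maps only by conjugation with a cyclic permutation, so both triangles commute up to homotopy. Yet $h_\infty$ is induced by the non-surjective inclusion $\mathrm{Sym}_{\mathrm{fin}}(\mathbb{N}_{\geq 1}) \hookrightarrow \mathrm{Sym}_{\mathrm{fin}}(\mathbb{N})$, so it is not an equivalence.

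The paper sidesteps this by never fixing $y$. It builds the diagonals as functors of left fibrations $g_n x \downarrow g_n \to j_n x \downarrow X_{n+1}$ lying over $j_n$, that is, naturally in $y$. From the resulting ladder it obtains an invertible functor $F \co x \downarrow X_\infty \to g_\infty x \downarrow g_\infty$ over $X_\infty$, with $x$ regarded in $X_\infty$. It then applies the Yoneda lemma: since $x \downarrow X_\infty$ is representable, $F$ agrees with the action of $g_\infty$ on morphisms as soon as it sends $\id_x$ to $\id_{g_\infty x}$. No comparison of squares is ever needed.

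If you prefer to stay pointwise, you can instead exploit that your bottom sequence consists of equivalences. The fibres of $X_\infty(x,y) \to B(g_\infty x, g_\infty y)$ are sequential colimits of the fibres at finite stages. Your $\sigma_n$ and two identities show that each composite of two consecutive transition maps between these fibres is null-homotopic, so every colimit fibre is contractible. Either way, an argument of this kind has to replace the interleaving step.
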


\begin{proof}
For $0 \leq m \leq n \leq \infty$, denote the functor $X_m \to X_n$ by $\iota_m^n$.
For $n : \mathbb N$, denote the functor $A \to X_n$ by $j_n$.
Since the functors $j_n \co A \to X_{n+1}$ and $\iota_n^{n+1} \co X_n \to X_{n+1}$ are jointly surjective, the image of $g_{n+1}$ is the union of the images of $f$ and $g_n$.
It follows that the image of $g_n$ is contained in the image of $f$ for every $n$.
Since the functors $\iota_n^\infty \co X_n \to X_\infty$ are jointly surjective, the image of $g_\infty$ coincides with the image of $f$.
Thus it remains to show that $g_\infty$ is fully faithful.

We want to show that, for every pair of objects $x', y : X_\infty$, the map $X_\infty(x',y) \to B(g_\infty x', g_\infty y)$ is an equivalence of spaces.
Since the functors $\iota_n^\infty \co X_n \to X_\infty$ are jointly surjective, we may assume that $x'$ is of the form $\iota_n^\infty x$ with $x : X_n$.
Without loss of generality, we may assume $n = 0$.
We may also assume we are given $a : A$ and an isomorphism $e \co f a \simeq g_0 x$ in $B$.

Since $X_1$ is defined as $A \vec{*}_B X_0$, we have a lax square
\[
\begin{tikzcd}
  f \downarrow g_0
  \ar[r, "\cod"]
  \ar[d, "\dom"']
  \twocellflip{dr}["\alpha", pos=0.3]
&
  X_0
  \ar[d, "\iota_0^1"]
\\
  A
  \ar[r, "j_0"']
&
  X_1 \rlap{.}
\end{tikzcd}
\]
This induces a functor $f \downarrow g_0 \to j_0 \downarrow \iota_0^1$.
Restricting along $a \co 1 \to A$, we get a functor $H \co f a \downarrow g_0 \to j_0 a \downarrow \iota_0^1$ of left fibrations over $X_0$.
By construction of $X_1$, we have that $H$ sends $e \co fa \xrightarrow{\sim} g_0 x$ to an isomorphism $j_0 a \simeq \iota_0^1 x$.
We also have a functor $G \co j_0 a \downarrow \iota_0^1 \to g_1 j_0 a \downarrow g_1 \iota_0^1$, given essentially by the action of $g_1$ on morphism.
The composite $G H \co f a \downarrow g_0 \to g_1 j_0 a \downarrow g_1 \iota_0^1$ is the equivalence induced by $f \simeq g_1 j_0$ and $g_0 \simeq g_1 \iota_0^1$.
We can now build the commutative diagram in $\lfib(X_0)$:
\begin{equation}\label{lfib-diagram}
\begin{tikzcd}
  f a \downarrow g_0
  \ar[r, "H"]
  \ar[d, "\sim"']
&
  j_0 a \downarrow \iota_0^1
  \ar[r, "G"]
  \ar[d, "\sim"]
&
  g_1 j_0 a \downarrow g_1 \iota_0^1
  \ar[d, "\sim"]
\\
  g_0 x \downarrow g_0
  \ar[r, "H'"']
&
  \iota_0^1 x \downarrow \iota_0^1
  \ar[r, "G'"]
&
  g_1 \iota_0^1 x \downarrow g_1 \iota_0^1 \rlap{.}
\end{tikzcd}
\end{equation}
The vertical equivalences are induced by $e \co f a \simeq g_0 x$ and $H e \co j_0 a \simeq \iota_0^1 x$.
The functor $G'$ is given by the action of $g_1$ on morphisms, and the composite $G' H'$ by the isomorphism $g_0 \simeq g_1 \iota_0^1$.
We can now forget about $a$ and the top row of the diagram.

Since $G'$ is the base change of the functor $\iota_0^1 x \downarrow X_1 \to g_1 \iota_0^1 x \downarrow g_1$ along $\iota_0^1$, we have also the following square:
\[
\begin{tikzcd}
  x \downarrow X_0
  \ar[r]
  \ar[d]
&
  \iota_0^1 x \downarrow X_1
  \ar[d]
\\
  g_0 x \downarrow g_0
  \ar[r]
  \ar[ur, dashed]
&
  g_1 \iota_0^1 x \downarrow g_1 \rlap{.}
\end{tikzcd}
\]
All sides admit a simple description not involving $a$ or $e$: the vertical functors are given by the actions of $g_0$ and $g_1$ on morphisms, the top map is induced by the action of $\iota_0^1$ on morphisms, and the bottom map is induced by the isomorphism $g_0 \simeq g_1 \iota_0^1$.
The indicated diagonal functor is induced by $H'$.
The bottom right triangle commutes since it is induced by $G' H'$, which we discussed above, and the top left triangle can be seen to commute by the Yoneda lemma.

Repeating this construction for every $n : \mathbb N$, we obtain the following commutative diagram:
\[
\begin{tikzcd}
  x \downarrow X_0
  \ar[r]
  \ar[d]
&
  \iota_0^1 x \downarrow X_1
  \ar[r]
  \ar[d]
&
  \iota_0^2 x \downarrow X_2
  \ar[r]
  \ar[d]
&
  \cdots
\\
  g_0 x \downarrow g_0
  \ar[r]
  \ar[ur]
&
  g_1 \iota_0^1 x \downarrow g_1
  \ar[r]
  \ar[ur]
&
  g_2 \iota_0^2 x \downarrow g_2
  \ar[r]
  \ar[ur]
&
  \cdots \rlap{.}
\end{tikzcd}
\]
The colimit of the top row is $\iota_0^\infty x \downarrow X_\infty$. The colimit of the bottom row is $g_\infty \iota_0^\infty x \downarrow g_\infty$.
The diagonal maps tell us that the induced functor $F \co \iota_0^\infty x \downarrow X_\infty \to g_\infty \iota_0^\infty x \downarrow g_\infty$ is invertible.
It remains to show that $F$ is isomorphic to the functor given by the action of $g_\infty$ on morphisms; since $F$ is invertible it follows from this that $g_\infty$ is fully faithful.

By the Yoneda lemma, it suffices to show that $F$ defines a morphism of categories over $X_\infty$, and that it sends the identity on $\iota_0^\infty x$ to the identity on $g_\infty \iota_0^\infty x$.
The former amounts to the fact that the witnesses of commutativity in \eqref{lfib-diagram} witness commutativity in $\lfib(X_0)$ and not just in $\cat$.
The latter follows from the fact that the functor $x \downarrow X_0 \to g_0 x \downarrow g_0$ sends the identity on $x$ to the identity on $g_0 x$.
\end{proof}

The directed join construction gives the following induction principle, which allows us to prove a result for a general category ``by induction'' on how the category is built from simpler pieces.

\begin{lemma}[label=join-induction]
Let $P$ be a class of categories such that:
\begin{conditions}
\item
Every groupoid is in $P$.
\item
If $B \xleftarrow{f} A \xrightarrow{g} C$ is a span of categories such that
$A$, $B$, $C$ are all in $P$, then $\cocomma{f}{g}$ is also in $P$.
\item
If $C_0 \to C_1 \to \cdots$ is a sequence of categories that are all in $P$, then their colimit is also in $P$.
\item
If $C$ is in $P$ and $D$ is a localisation of $C$ at some collection of morphisms,
then $D$ is in $P$.
\end{conditions}
Then every category is in $P$.
\end{lemma}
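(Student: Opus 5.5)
We apply \cref{join-correctness} to the core inclusion $f \co C^\simeq \to C$ of an arbitrary category $C$, taking $X_0 \coloneqq C^\simeq$ and $g_0 \coloneqq f$. The hypothesis that the image of $g_0$ lies in the image of $f$ holds trivially. Since $f$ is surjective on objects, its full image is all of $C$. So \cref{join-correctness} gives $X_\infty \simeq C$, where $X_\infty$ is the colimit of the sequence $X_0 \to X_1 \to \cdots$ with $X_{n+1} = C^\simeq \vec{*}_C X_n$. It then suffices to show by induction on $n$ that every $X_n$ is in $P$; the third condition then puts $X_\infty \simeq C$ in $P$, and $P$ is closed under equivalence since it is a class of categories in our invariant setting.

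For the base case, $X_0 = C^\simeq$ is a groupoid, so it is in $P$ by the first condition. For the inductive step, suppose $X_n \in P$. By the explicit description following \cref{directed-join}, $X_{n+1}$ is a localisation of the cocomma of the span
\[
C^\simeq \xleftarrow{\dom} f \downarrow g_n \xrightarrow{\cod} X_n .
\]
By the second and fourth conditions, it therefore suffices to show that the comma category $f \downarrow g_n$ lies in $P$; the other two legs, $C^\simeq$ and $X_n$, are already in $P$.

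The main obstacle is that $f \downarrow g_n$ is neither a groupoid nor obviously built from members of $P$. The first approach I would try is to show that $f \downarrow g_n \to X_n$ is a left fibration with domain $C^\simeq$ a groupoid. More precisely, $f \downarrow g_n$ lives over the groupoid $C^\simeq$, and its fibre over $c$ is the left fibration $c \downarrow g_n \to X_n$. This fibre is the comma category $g_n \uparrow$-style slice $c \downarrow g_n$, and I would try to show it lies in $P$ by running the same induction simultaneously for slices. If that proves awkward, I would instead strengthen the induction hypothesis: prove that for every $n$, every left fibration over $X_n$, indexed over a groupoid, is in $P$. This stronger statement can be propagated along cocommas and localisations using descent for left fibrations, since a left fibration over a cocomma (resp. localisation) is itself a cocomma (resp. localisation) of left fibrations over the pieces.

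A cleaner alternative, which I would check first, is to avoid the comma altogether. One could replace $f \downarrow g_n$ by the groupoid $(f \downarrow g_n)^\simeq$ together with the arrow category data. But the defining universal property of $\vec{*}$ only needs the lax square, and a lax square out of $f \downarrow g_n$ may well be freely generated from one out of a simpler category. Concretely, I would try to exhibit $f \downarrow g_n$ as $C^\simeq \times_C \arr(C) \times_C X_n$ and show that this is in $P$ via the inductive hypothesis applied to base changes along left fibrations. I expect this step, establishing membership of the comma in $P$, to carry essentially all the difficulty.
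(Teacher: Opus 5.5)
\textbf{Gap: the step you flag as the main obstacle is never resolved.} Your reduction is correct up to the point where you need $f \downarrow g_n \in P$. That is where the proposal stops: none of your three options is carried out. The induction hypothesis as you set it up ($X_n \in P$) cannot deliver this, because $P$ is not assumed closed under pullbacks, slices, or passing to the total space of a fibration over a member of $P$. Your first option has an extra problem: it would have to reassemble $f \downarrow g_n$ from its fibres $c \downarrow g_n$ over the groupoid $C^\simeq$, and closure of $P$ under such space-indexed coproducts is not a hypothesis.

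\textbf{The right invariant, and what it still needs.} The invariant you mention only as a fallback is the one the paper uses. Prove by induction that $X_n$ lies in the class $Q$ of categories $Y$ such that $Z \in P$ for \emph{every} left fibration $Z \to Y$ (no ``indexing over a groupoid''); then $X_n \in P$ via the identity.

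Strengthening alone only moves the obstacle. By descent, a left fibration $W$ over $\cocomma{\dom}{\cod}$ is the cocomma of $Y \leftarrow \dom^* Y \to Z$, where $Y \to C^\simeq$ and $Z \to X_n$ are its restrictions. The middle piece $\dom^* Y$ is a left fibration over $f \downarrow g_n$, and $X_n \in Q$ says nothing about that category. Your one-line ``propagation along cocommas'' does not address this.

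\textbf{The missing fact.} You need that $\cod \co f \downarrow g_n \to X_n$ is itself a left fibration. This holds precisely because the domain $C^\simeq$ of $f$ is a groupoid: $\cod$ is a cocartesian fibration whose fibres $C^\simeq \times_C C/g_n x$ are right fibrations over a groupoid, hence groupoids. With this in hand:
\begin{itemize}
\item $\dom^* Y \to f \downarrow g_n \to X_n$ is a composite of left fibrations, so $\dom^* Y \in P$.
\item $Y$ is a groupoid, so $Y \in P$.
\item $Z \in P$ because $X_n \in Q$.
\end{itemize}
In particular $f \downarrow g_n \in P$, which is exactly your obstacle.

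\textbf{Remaining ingredients.} $Q$ contains groupoids, since left fibrations over groupoids are groupoids. $Q$ is closed under localisation, because a left fibration $Z \to Y'$ over a localisation $Y \to Y'$ exhibits $Z$ as a localisation of $Z \times_{Y'} Y$. With these ingredients the induction $X_n \in Q$ goes through, and the rest of your argument then finishes the proof.
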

\begin{proof}
Let $C$ be a category; we want to show that $C$ is in $P$.
Consider \cref{join-correctness} with $A = X_0 = C^\simeq$ and $B = C$.
Since the full image of the core inclusion $C^\simeq \to C$ is all of $C$, this exhibits $C$ as the sequential colimit of a sequence of categories
$X_0 \to X_1 \to \cdots$.
It thus suffices to show that $X_n$ is in $P$ for every $n$.

Let $Q$ be the class of categories $Y$ such that for every left fibration $Z \to Y$, the category $Z$ lies in $P$.
We will show, by induction on $n$, that $X_n$ lies in $Q$; it follows that $X_n$ lies in $P$ since the identity $X_n \to X_n$ is a left fibration.

Note that if $Y$ is a groupoid, then $Y$ lies in $Q$, since for $Z \to Y$ a left fibration, $Z$ is also a groupoid.

We claim that if $Y$ lies in $Q$ and $Y'$ is a localisation of $Y$, then $Y'$ also lies in $Q$.
Indeed, if $Z \to Y'$ is a left fibration, then $Z \times_{Y'} Y$ lies in $P$ since $Y$ lies in $Q$, and $Z$ is a localisation of $Z \times_{Y'} Y$, so it also lies in $P$.

Next we claim that for any span of categories $B \xleftarrow{f} A \xrightarrow{g} C$ where $B$ is a groupoid, $g$ is a left fibration, and $C$ lies in $Q$, the cocomma category $\cocomma{f}{g}$ also lies in $Q$.
Suppose $W \to \cocomma{f}{g}$ is a left fibration.
Denote by $Y \to B$ and $Z \to C$ its base changes along $Y \to \cocomma{f}{g}$ and $Z \to \cocomma{f}{g}$, respectively.
Then $W$ is the cocomma of $Y \leftarrow f^* Y \to Z$.
$Y$ lies in $P$ since it is a groupoid.
$Z$ lies in $P$ since $Z \to C$ is a left fibration, and $f^* Y$ lies in $P$ since $f^*Y \to A$ and $g \co A \to C$ are left fibrations.
Thus $W$ lies in $P$, as needed.

It is now direct by induction that $X_n$ lies in $Q$ for every $n$.
\end{proof}

\subsection{A virtual directed join construction}

To construct directed univalent cocartesian fibrations, we now want to replay the directed join construction for cocartesian fibrations $E \to C$ (thought of as functors $C \to \cat$).
The analogue of comma categories turns out to be a bit subtle.

\begin{lemma}
Let $p_0 \co E_0 \to C_0$ and $p_1 \co E_1 \to C_1$ be two cocartesian fibrations.
Assuming that $C_0$ is a groupoid, then there is a category $\funcocart(p_0,p_1)$ which is cofreely generated by functors
\begin{align*}
u_0 &: \funcocart(p_0, p_1) \to C_0,\\
u_1 &: \funcocart(p_0, p_1) \to C_1
\end{align*}
with a functor $u_{01} \co u_0^* E_0 \to u_1^* E_1$ of cocartesian fibrations over $F$.
\end{lemma}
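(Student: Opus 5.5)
Since $C_0$ is a groupoid, extensivity (\cref{descent-groupoid}) splits $p_0$ into its fibres: $E_0 \simeq \bigsqcup_{c : C_0} E_0(c)$ over $C_0 \simeq \bigsqcup_{c : C_0} 1$. I will therefore first build the construction over a single object and then glue. Concretely, I aim to set
\[
\funcocart(p_0, p_1) \coloneqq \bigsqcup_{c : C_0} \funcocart(E_0(c), p_1),
\]
where for a fixed category $F$ the category $\funcocart(F, p_1)$ is cofreely generated by a functor $u_1 \co \funcocart(F,p_1) \to C_1$ together with a functor $F \times \funcocart(F,p_1) \to u_1^* E_1$ over $\funcocart(F,p_1)$. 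The key observation is that every functor $F \times T \to u_1^*E_1$ over $T$ is automatically cocartesian: cocartesian maps of the constant fibration $F \times T \to T$ are those whose $F$-component is invertible. A functor over $T$ sends these to composites of a cocartesian lift and a fibrewise isomorphism, so they land on cocartesian maps. Hence over a point the cocartesian condition imposes nothing.

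Next I build $\funcocart(F, p_1)$ as an explicit limit. A functor $T \to C_1$ together with a functor $F \times T \to E_1$ over $C_1$ (via the projection to $T$) is the same as a functor $T \to \fun(F, E_1)$ whose composite with $\fun(F, p_1)$ factors through the constant functors $\Delta \co C_1 \to \fun(F, C_1)$. So I take the pullback
\[
\funcocart(F, p_1) \coloneqq C_1 \times_{\fun(F, C_1)} \fun(F, E_1),
\]
using the assumed existence of pullbacks and functor categories. The cofree property with respect to test categories $T$ follows by adjunction, $\map(T, \fun(F, -)) \simeq \map(F \times T, -)$, and by the pullback universal property. Because the category of lax data then has a terminal object, the 2-categorical version is immediate by the argument of the section on 2-limits.

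Finally I glue over $C_0$. Given $T$ with $u_0 \co T \to C_0$, extensivity applies to $T$ over the groupoid $C_0$ (\cref{descent-groupoid}, applied to $T \to C_0$). It writes $T \simeq \bigsqcup_c T_c$ with $u_0^* E_0 \simeq \bigsqcup_c E_0(c) \times T_c$. The data of $u_1$ and a cocartesian $u_{01}$ then decompose into the corresponding data over each $T_c$, which by the previous step are maps $T_c \to \funcocart(E_0(c), p_1)$. Assembling these gives $\map(T, \funcocart(p_0,p_1))$ over $\map(T, C_0)$.

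I expect the main obstacle to be the gluing step: checking that ``cocartesian functor over $T$'' is a fibrewise condition after decomposing $T$ over $C_0$. This needs a map in $u_0^*E_0$ to be cocartesian exactly when it is so in the summand $E_0(c)\times T_c$. That holds because pullback along $T \to C_0$ preserves cocartesian maps and those of $u_0^*E_0$ lie over isomorphisms in $C_0$, but it must be stated carefully. The remaining coherence of the identifications I would leave at the paper's usual level of detail.
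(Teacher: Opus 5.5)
\textbf{There is a genuine gap.} Your ``key observation'', that every functor $F \times T \to u_1^* E_1$ over $T$ is automatically cocartesian, is false, and the construction built on it fails.

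Such a functor sends the cocartesian morphism $(\id_e, g) \co (e,t) \to (e,t')$ to some morphism $a(e) \to b(e)$ of $E_1$ lying over $u_1 g$. This morphism factors as the $p_1$-cocartesian lift $a(e) \to (u_1 g)_! a(e)$ followed by a fibrewise map $\beta_e \co (u_1 g)_! a(e) \to b(e)$. Nothing forces $\beta_e$ to be invertible, and the composite is $p_1$-cocartesian exactly when it is.

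Concretely, take $C_0 = C_1 = F = 1$ and $T = \II$. Then $u_1^* E_1 = E_1 \times \II$, and a functor $\II \to E_1 \times \II$ over $\II$ is an arbitrary morphism of $E_1$. It is cocartesian only when that morphism is an isomorphism. So the correct $\funcocart(p_0,p_1)$ here is $E_1^\simeq$, whereas your pullback $C_1 \times_{\fun(F, C_1)} \fun(F, E_1)$ is $E_1$.

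Your pullback is exactly the paper's auxiliary category $F'$. It is cofree for functors over the base that need \emph{not} preserve cocartesian maps. The missing step is to cut it down to a wide subcategory:
\begin{itemize}
\item An object of $F'$ is a pair $(c, a)$ with $a \co F \to \fib_{p_1}(c)$.
\item A morphism $(c,a) \to (d,b)$ is a map $f \co c \to d$ together with a natural transformation $\beta \co f_! a \to b$.
\item Take the wide subcategory spanned by the morphisms with $\beta$ invertible. This class contains the isomorphisms and is closed under composition.
\end{itemize}
One then checks that $h \co T \to F'$ factors through this wide subcategory if and only if $h^* u_{01}$ preserves cocartesian maps. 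This uses two facts. First, every cocartesian map $(\phi, g)$ of $F \times T \to T$ (with $\phi$ invertible) is an isomorphism composed with one of the form $(\id_e, g)$. Second, $(\id_e, g)$ is sent to $\beta_e$ composed with a cocartesian lift. The universal property of wide subcategories then gives the cofree property.

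Your reduction over $C_0$ by extensivity is fine and matches the paper's reduction to $C_0 = 1$. The part that needs this correction is the single-fibre step.
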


In the end, such a category can be shown to exist even if $C_0$ is not a groupoid (\eg, by taking the comma category of the straightenings of $p_0$ and $p_1$), but for now we will make do with the restricted statement.

\begin{proof}
We assume that $C_0$ is the terminal category; the general case follows by extensivity over $C_0$.
Define a category $F'$ by the pullback square
\[
\begin{tikzcd}
  F'
  \ar[r]
  \ar[d]
  \ar[dr, phantom, "\lrcorner" very near start]
&
  \fun(E_0, E_1)
  \ar[d]
\\
  C_1
  \ar[r, "\Delta"']
&
  \fun(E_0, C_1)
\end{tikzcd}
\]
Then $F'$ is cofreely generated by functors $u_0 \co F' \to C_0$ (this data is trivial) and $u_1 \co F' \to C_1$ together with a functor $u_{01} \co u_0^* E_0 \to u_1^* E_1$ of categories over $F'$.
In particular, an object of $F'$ is given by an object $c : C_1$ together with a functor $a \co E_0 \to \fib_{p_1}(c)$.
A morphism from $(c, a)$ to $(d, b)$ is given by a morphism $f \co c \to d$ together with a natural transformation $f_! a \to b$ of functors $E_0 \to \fib_{p_1}(d)$.
Let $F$ denote the wide subcategory of $F'$ spanned by morphisms where the natural transformation $f_! a \to b$ is invertible.
We claim that a functor $h \co D \to F'$ factors through $F$ if and only if the functor $h^* u_{01} \co h^* u_0^* E_0 \to h^* u_1^* E_1$ of categories over $D$ preserves cocartesian maps; it then follows that $F$ has the desired universal property.

Since $C_0$ is terminal, $h^*u_0^* E_0$ is simply $E_0 \times D$, and a morphism is cocartesian for the projection $E_0 \times D \to D$ if and only if it is of the form $(\id_e, g) \co (e, x) \to (e, y)$ for $g : D(x, y)$.
The functor $h \co D \to F'$ assigns to $x$ and $y$ respective functors $a \co E_0 \to \fib_{p_1}(u_1 h x)$ and $b \co E_0 \to \fib_{p_1}(u_1 h y)$, and to $g$ a natural transformation $\beta \co (u_1 h g)_! a \to b$.
The functor $h^* u_{01}$ sends $(\id_e, g)$ to a morphism in $E_1$ dgiven by the composite $a e \xrightarrow{\cocart} (u_1 g)_! a e \xrightarrow{\beta e} b e$.
This is $p_1$-cocartesian if and only if $\beta e$ is invertible.
Thus $h^* u_{01}$ preserves every cocartesian morphism of the form $(\id_e, f)$ if and only if $hf$ lies in $F'$.
\end{proof}

The following construction should be thought of as a virtual analogue of \cref{directed-join}.

\begin{construction}[label=virtual-join]
Given cocartesian fibrations $p_0 \co E_0 \to C_0$ and $p_1 \co E_1 \to C_1$ where $C_0$ is a groupoid, we denote by $C_0 \vec{*}_{p_0, p_1} C_1$ the localisation of the cocomma of $C_0 \xleftarrow{u_0} \funcocart(p_0, p_1) \xrightarrow{u_1} C_1$ at the collection of morphisms corresponding to objects $x$ of $\funcocart(p_0, p_1)$ whose underlying functor $\fib_{p_0}(u_0 x) \to \fib_{p_1}(u_1 x)$ is invertible.

Denote the inclusions from $C_0$, $C_1$ into $C_0 \vec{*}_{p_0,p_1} C_1$ by $i_0$, $i_1$.
Denote the natural transformation $i_0 u_0 \to i_1 u_1$ by $\alpha$.

By \cref{descent-localisation,descent-cocomma}, we obtain a cocartesian fibration $p_0 \vec{*} p_1$ over $C_0 \vec{*}_{p_0,p_1} C_1$ with witnesses that its base changes along $i_0$, $i_1$ are $p_0$, $p_1$, as well as a witness that the cocartesian functor $u_{01} \co u_0^* p_0 \to u_1^* p_1$ is induced by $\alpha \co i_0 u_0 \to i_1 u_1$.
\end{construction}

Recall that a functor $p \co E \to B$ is said to \emph{classify} a category $C$ if there exists an object $b : B$ together with an equivalence of categories $C \simeq \fib_p(b)$.
A cocartesian fibration $p \co E \to B$ is said to be \emph{directed univalent} if for every $x, y : B$, the map $B(x, y) \to \map(\fib_p(x), \fib_p(y))$ given by $f \mapsto f_!$ is an equivalence of spaces.

The following lemma can be thought of as a virtual analogue of \cref{join-correctness}, or as a \emph{directed} analogue of Uemura's construction of univalent completion~\cite[Proposition 4.7]{uemura25}.

\begin{lemma}[label=virtual-join-correctness]
Consider cocartesian fibrations $p \co B \to A$ and $q_0 \co Y_0 \to X_0$ where $A$ is a groupoid.
Define inductively over $n : \mathbb N$ a cocartesian fibration $q_n \co Y_n \to X_n$ by taking $q_{n+1}$ to be the cocartesian fibration $p \vec{*} q_n$ of \cref{virtual-join}.
Let $X_\infty$ denote the colimit of the sequence $X_0 \xrightarrow{i_1} X_1 \xrightarrow{i_1} X_2 \to \cdots$.
By \cref{descent-sequential}, we obtain a cocartesian fibration $q_{\infty} \co Y_\infty \to X_\infty$ whose base change along $X_n \to X_\infty$ is $q_n$.

Suppose that every category classified by $q_0$ is also classified by $p$.
Then $q_\infty$ is directed univalent, and a category is classified by $q_\infty$ if and only if it is classified by $p$.
\end{lemma}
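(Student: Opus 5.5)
We mirror the proof of \cref{join-correctness}, replacing the comma category $f \downarrow g_n$ by $\funcocart(p, q_n)$ and the action of $g_n$ on morphisms by transport $f \mapsto f_!$. Write $i_0, i_1$ for the inclusions of $A$ and $X_n$ into $X_{n+1} = A \vec{*}_{p,q_n} X_n$, and $\iota_m^n \co X_m \to X_n$ for the connecting functors.

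The classification claim comes first. The functors $A \to X_{n+1}$ and $X_n \to X_{n+1}$ are jointly surjective, and the base changes of $q_{n+1}$ along them are $p$ and $q_n$. So the categories classified by $q_{n+1}$ are those classified by $p$ together with those classified by $q_n$. By induction from the hypothesis on $q_0$, every $q_n$ classifies exactly the categories classified by $p$ (or a subset for $n=0$). Since the $\iota_n^\infty$ are jointly surjective and $q_\infty$ restricts to $q_n$ by \cref{descent-sequential}, the same holds for $q_\infty$.

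Directed univalence is the main part. Fix $x' , y : X_\infty$; by joint surjectivity we may take $x' = \iota_0^\infty x$ with $x : X_0$ (after reindexing). By the classification claim, there are $a : A$ and an equivalence $e \co \fib_p(a) \simeq \fib_{q_0}(x)$. Let $T_n$ denote the left fibration over $X_n$ whose fibre over $z$ is $\map(\fib_{q_0}(x), \fib_{q_n}(z))$. It is obtained as a base change of $\funcocart$ with the fibre of $q_0$ at $x$. Transport along $q_n$ gives a map $\iota_0^n x \downarrow X_n \to T_n$ of left fibrations over $X_n$. We must show that the colimit of these maps is invertible. As in \cref{join-correctness}, it suffices to build diagonal fillers $T_n \to (\iota_0^{n+1}x \downarrow X_{n+1})$ restricted to $X_n$, making both triangles commute.

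The filler comes from the generating 2-cell $\alpha \co i_0 u_0 \to i_1 u_1$ of \cref{virtual-join}. Restricting $\funcocart(p,q_n)$ along $a \co 1 \to A$ yields exactly the left fibration of functors $\fib_p(a) \to \fib_{q_n}(-)$ preserving the fibre structure, which is $T_n$ after composing with $e$. Then $\alpha$ gives a functor $H \co T_n \to i_0 a \downarrow i_1$ over $X_n$. Because of the localisation in \cref{virtual-join}, $H$ sends the object $e$, which is an equivalence, to an isomorphism $i_0 a \simeq i_1 \iota_0^n x$. Transporting along this isomorphism gives the filler. Commutativity of the lower triangle uses the fact that $\alpha$ induces $u_{01}$, so its transport is the given functor. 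Commutativity of the upper triangle follows from the Yoneda lemma, since it suffices to check the identity of $x$.

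Passing to the colimit, with sequential colimits commuting with the relevant constructions and $T_\infty \simeq \colim T_n$ because $\map(\II,-)$ and fibres commute with sequential colimits, the map $\iota_0^\infty x \downarrow X_\infty \to T_\infty$ is invertible. Taking fibres at $y$ gives $X_\infty(x', y) \simeq \map(\fib(x'), \fib(y))$.

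The main obstacle is the identification of $a^*\funcocart(p,q_n)$ with $T_n$ as left fibrations over $X_n$ (in particular, that it is a left fibration). A second obstacle is checking that the composite of $H$ with transport is coherently the identification via $e$, that is, commutativity in $\lfib(X_n)$ rather than merely objectwise. For the first, I would use the explicit description of $F'$ and $F$ from the proof of the $\funcocart$ lemma with $C_0 = 1$.
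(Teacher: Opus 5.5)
Your proposal is correct and follows the paper's proof essentially step for step. It uses the same classification argument via joint surjectivity, the same filler built from $\alpha$ restricted along $a$ and corrected by the isomorphism $H e$, the same two triangles (the lower one because $\alpha$ induces $u_{01}$, the upper one by Yoneda), and the same zigzag argument on colimits. The one point you pass over quickly is that the invertible colimit map must be identified with transport $f \mapsto f_!$ for $q_\infty$ itself. The paper does this by Yoneda: it checks that the map is a morphism of left fibrations over $X_\infty$ and sends the identity on $\iota_0^\infty x$ to the identity functor.
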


\begin{proof}
We follow the proof of \cref{join-correctness} mutatis mutandis.
Denote the functor $A \to X_{n+1}$ by $j_n$, and the functor $X_m \to X_n$ for $0 \le m \le n \le \infty$ by $\iota_m^n$.
We have that $j_n$ and $\iota_n^{n+1}$ are jointly surjective, so every category classified by $q_n$ is also classified by $p$.
Since the functors $\iota_n^\infty$ are jointly surjective, a category is classified by $q_\infty$ if and only if it is classified by
$p$.
It remains to show that $q_\infty$ is directed univalent, \ie, that for all $x', y : X_\infty$, the map $X_\infty(x',y) \to \map(\fib_{q_\infty}(x'),\fib_{q_\infty}(y))$
is invertible.
Since the functors $\iota_n^\infty$ are jointly surjective, it suffices to consider $x'$ of the form $\iota_n^\infty x$ with $x : X_n$.
Without loss of generality we may assume $n = 0$.
We may also assume that we are given $a : A$ and an equivalence $e \co \fib_p(a) \simeq \fib_{q_0}(x)$.

Since $X_1$ is defined as $A \vec{*}_{p,q_0} X_0$, we have a lax square
\[
\begin{tikzcd}
  \funcocart(p, q_0)
  \ar[r, "u_1"]
  \ar[d, "u_0"']
  \twocellflip{dr}["\alpha", pos=0.3]
&
  X_0
  \ar[d, "\iota_0^1"]
\\
  A
  \ar[r, "j_0"']
&
  X_1 \rlap{.}
\end{tikzcd}
\]
Restricting along $a \co 1 \to A$, this gives a functor $H \co \funcocart(a^* p, q_0) \to j_0 a \downarrow \iota_0^1$ of left fibrations over $X_0$.
By construction of $X_1$, we have that $H$ sends $e$ to an isomorphism $j_0 a \simeq \iota_0^1 x$.
We also have a functor $G \co j_0 a \downarrow \iota_0^1 \to \funcocart((j_0 a)^* q_1 ,\iota_0^{1*} q_1)$ of left fibrations over $X_0$, which sends a morphism $f \co j_0 a \to \iota_0^1 y$ to the transport functor $f_! \co \fib_{q_1}(j_0 a) \to \fib_{q_1}(\iota_0^1 y)$.
The composite $G H \co \funcocart(a^*p, q_0) \to \funcocart((j_0 a)^* q_1, \iota_0^{1*} q_1)$ is the equivalence induced by $p \simeq j_0^* q_1$ and $q_0 \simeq \iota_0^{1*} q_1$.
We can now build the following commutative diagram in $\lfib(X_0)$:
\[
\begin{tikzcd}
  \funcocart(a^*p, q_0)
  \ar[r, "H"]
  \ar[d, "\sim"']
&
  j_0 a \downarrow \iota_0^1
  \ar[r, "G"]
  \ar[d, "\sim"]
&
  \funcocart((j_0 a)^* q_1,\iota_0^{1*} q_1)
  \ar[d, "\sim"]
\\
  \funcocart(x^* q_0, q_0)
  \ar[r, "H'"']
&
  \iota_0^1 x \downarrow \iota_0^1
  \ar[r, "G'"']
&
  \funcocart((\iota_0^1 x)^* q_1, \iota_0^{1*} q_1) \rlap{.}
\end{tikzcd}
\]
The vertical equivalences are induced by $e \co a^* p \simeq x^* q_0$ and $H e \co j_0 a \simeq \iota_0^1 x$.
The functor $G'$ sends a morphism $f$ to the transport functor $f_!$.
The composite $G' H'$ is induced by the equivalence $q_0 \simeq \iota_0^1 q_1$.
We can now forget about $a$ and the top row of the diagram above.

Since $G'$ is the base change of $\iota_0^1 x \downarrow X_1 \to \funcocart((\iota_0^1 x)^* q_1, q_1)$ along $\iota_0^1 \co X_0 \to X_1$, we also have the following square:
\begin{equation}\label{lfib-diagram-2}
\begin{tikzcd}
  x \downarrow X_0
  \ar[r]
  \ar[d]
&
  \iota_0^1 x \downarrow X_1
  \ar[d]
\\
  \funcocart(x^* q_0, q_0)
  \ar[r]
  \ar[ur, dashed]
&
  \funcocart((\iota_0^1 x)^* q_1, q_1) \rlap{.}
\end{tikzcd}
\end{equation}
All sides admit a simple description not involving $a$ or $e$.
The indicated diagonal functor is induced by $H'$.
The bottom right triangle commutes since it is induced by $G' H'$, which we discussed above.
Top left triangle can be seen to commute by the Yoneda lemma.

Repeating this construction for every $n: \mathbb N$, we obtain the following commutative diagram:
\[
\begin{tikzcd}
  x \downarrow X_0
  \ar[r]
  \ar[d]
&
  \iota_0^1 x \downarrow X_1
  \ar[r]
  \ar[d]
&
  \iota_0^2 x \downarrow X_2
  \ar[d]
  \ar[r]
&
  \cdots
\\
  \funcocart(x^* q_0, q_0)
  \ar[r]
  \ar[ur]
&
  \funcocart((\iota_0^1 x)^* q_1, q_1)
  \ar[r]
  \ar[ur]
&
  \funcocart((\iota_0^2 x)^* q_2, q_2)
  \ar[r]
  \ar[ur]
&
  \cdots \rlap{.}
\end{tikzcd}
\]
The colimit of the top row is $\iota_0^\infty x \downarrow X_\infty$. The colimit of the bottom row is $\funcocart((\iota_0^\infty x)^* q_\infty, q_\infty)$.
The diagonal maps tell us that the induced map $F \co \iota_0^\infty x \downarrow X_\infty \to \funcocart((\iota_0^\infty x)^* q_\infty, q_\infty)$ is invertible.
It remains to show that $F$ is isomorphic to the functor $f \mapsto f_!$; since $F$ is invertible it follows from this that $q_\infty$ is directed univalent.

By the Yoneda lemma, it suffices to show that $F$ defines a morphism of categories over $X_\infty$, and that it sends the identity on $\iota_0^\infty x$ to the identity functor on $(\iota_0^\infty x)^* q_\infty$.
The former amounts to the fact that the witnesses of commutativity in \eqref{lfib-diagram-2} witness commutativity in $\lfib(X_0)$ and not just in $\cat$.
The latter follows from the fact that the functor $(x \downarrow X_0) \to \funcocart(x^*q_0, q_0)$ sends the identity on $x$ to the identity functor on $x^* q_0$.
\end{proof}

We can now easily prove the existence of enough directed univalent cocartesian fibrations.

\begin{theorem}[label=enough,store=enough]
Given a functor $q \co Y \to X$, there is a directed univalent cocartesian fibration $p \co E \to B$, such that $p$ classifies every fibre of $q$.
\end{theorem}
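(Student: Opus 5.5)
The plan is to apply \cref{virtual-join-correctness} with a suitable groupoid-indexed cocartesian fibration $p$ and a suitable starting fibration $q_0$. Given $q \co Y \to X$, take $A \coloneqq X^\simeq$, the core of $X$, which is a groupoid. Restrict $q$ along the core inclusion to get $B \coloneqq Y \times_X X^\simeq \to X^\simeq$. By \cref{descent-groupoid}, every functor into a groupoid is a cocartesian fibration, with the isomorphisms as cocartesian maps, so $p \co B \to A$ is a cocartesian fibration. Its fibre over $x : A$ is $\fib_q(x)$, so $p$ classifies exactly the fibres of $q$.

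For the starting point take $q_0 \coloneqq p$ itself, with $X_0 \coloneqq A$ and $Y_0 \coloneqq B$. This is a cocartesian fibration, and every category it classifies is trivially classified by $p$, so the hypothesis of \cref{virtual-join-correctness} holds. The lemma then produces the cocartesian fibration $q_\infty \co Y_\infty \to X_\infty$, built using \cref{virtual-join} and \cref{descent-sequential}. It guarantees that $q_\infty$ is directed univalent and that a category is classified by $q_\infty$ exactly when it is classified by $p$, that is, exactly when it is a fibre of $q$. Setting $E \coloneqq Y_\infty$, $B \coloneqq X_\infty$ and $p \coloneqq q_\infty$ (renaming to match the statement) gives the theorem.

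The only step needing care is checking that the inputs meet the standing hypotheses of \cref{virtual-join}. At every stage the first fibration is $p$ over the groupoid $A$, which is what the construction of $\funcocart(p, q_n)$ requires. The second fibration $q_n$ is an arbitrary cocartesian fibration, and inductively each $q_{n+1} = p \vec{*} q_n$ is again cocartesian by \cref{descent-localisation,descent-cocomma}. So the hardest work lies entirely inside \cref{virtual-join-correctness}, which we take as given. I would also note that the conclusion here is the stronger "if and only if": $p$ classifies every fibre of $q$, and only such categories.
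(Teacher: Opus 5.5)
Your proposal is correct and matches the paper's proof: you restrict $q$ to the core $X^\simeq$ to get a cocartesian fibration over a groupoid whose fibres are exactly the fibres of $q$, and then apply \cref{virtual-join-correctness}. The paper leaves the starting fibration $q_0$ implicit; your choice $q_0 \coloneqq p$ is the natural one, and it makes the hypothesis of that lemma hold trivially.
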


\begin{proof}
Let $q \co Y \to X$ be an arbitrary functor, and let $P$ be the collection of categories classified by $q$.
Equivalently, $P$ is the collection of categories classified by $q' \co Y \times_X X^\simeq \to X^\simeq$.
By \cref{virtual-join-correctness}, we obtain a directed univalent cocartesian fibration which classifies the same categories.
\end{proof}

\section{Straightening of cocartesian fibrations}

Finally we prove a straightening theorem against any directed univalent fibration.

\begin{theorem}[label=straightening,store=straightening]
Suppose $p \co E \to B$ is a directed univalent cocartesian fibration and $C$ a category.
Given a cocartesian fibration $q \co D \to C$ such that $p$ classifies every fibre of $q$, there is a functor $f \co C \to B$ and an equivalence $D \simeq C \times_B E$ of categories over $C$.
Moreover, given two functors $f, g \co C \to B$, the canonical map from $\map(f, g)$ to the space of cocartesian functors $f^* p \to g^* p$ is invertible.
\end{theorem}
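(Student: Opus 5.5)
We prove the theorem by the induction principle of \cref{join-induction}, taking $P$ to be the class of categories $C$ for which both assertions hold. For the uniqueness assertion the statement should be read over all $f,g \co C \to B$. Rephrase the combined statement as follows. Let $\cocartfib_p(C) \subseteq \cocartfib(C)$ be the full subcategory of cocartesian fibrations whose fibres are classified by $p$. Then the (wild) functor $\fun(C,B) \to \cocartfib_p(C)$, $f \mapsto f^*E$, of \cref{cocartesian-transport} is an equivalence, meaning it is surjective on objects and induces equivalences $\map(f,g) \simeq \map_{\cocart}(f^*E,g^*E)$. Both $\fun(-,B)$ and $\cocartfib_p(-)$ send colimits of the relevant shapes to limits, and the comparison is natural in $C$. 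It therefore suffices to check the four closure conditions.

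\emph{Groupoids.} For a groupoid $X$, $\fun(X,B)$ is the space $\map(X,B^\simeq)$, which by extensivity is $\prod_{x:X} B^\simeq$ fibred appropriately. By \cref{descent-groupoid}, $\cocartfib(X)\simeq\cat^X$. The claim then reduces to a statement about the core: $B^\simeq \to (\cat^\simeq)$ restricted to $p$-classified categories is an equivalence onto its image. This follows from directed univalence on invertible maps, since $B(x,y)\simeq\map(E_x,E_y)$ restricts to isomorphisms on both sides.

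\emph{Cocommas.} By \cref{descent-cocomma}, $\cocartfib(\cocomma{f}{g}) \simeq f^*\downarrow g^*$, and similarly $\fun(\cocomma{f}{g},B)$ is the comma of $\fun(B',B)\to\fun(A,B)\leftarrow\fun(C,B)$ by the $2$-categorical universal property of the cocomma. The comparison maps are equivalences on the three vertices by hypothesis, and the comma of equivalences is an equivalence. The one thing to check is that the mapping spaces of the comma agree, namely $\fun(A,B)(f',g')\simeq \map_\cocart(f'^*E,g'^*E)$, which is exactly the uniqueness part for $A$. The $p$-classified condition is preserved because every fibre over $\cocomma{f}{g}$ is a fibre over $B'$ or $C$.

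\emph{Sequential colimits.} This is immediate from \cref{descent-sequential} together with $\fun(\colim C_n,B)\simeq\lim\fun(C_n,B)$ and the fact that a limit of equivalences is an equivalence.

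\emph{Localisations.} This is the main obstacle, handled by \cref{descent-localisation}. $\fun(C[W^{-1}],B)$ is the full subcategory of $\fun(C,B)$ of functors inverting $W$, and $\cocartfib(C[W^{-1}])$ is the full subcategory of $\cocartfib(C)$ of fibrations inverting $W$. We must check that $f$ inverts $w$ if and only if $f^*E$ inverts $w$, i.e.\ that $(fw)_!$ is an equivalence. This holds because, by directed univalence, $B(x,y)\to\map(E_x,E_y)$ is an equivalence compatible with composition, so a morphism of $B$ is invertible if and only if its transport is. Fully faithfulness then restricts along the full inclusions.

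A final subtlety is that \cref{join-induction} as proved only uses cocommas along left fibrations and localisations, so these cases suffice.
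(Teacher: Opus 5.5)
Your strategy is the paper's. You call $C$ good when $\fun(C,B)\to\cocartfib_p(C)$ is invertible, and you verify the hypotheses of \cref{join-induction} using \cref{descent-groupoid,descent-cocomma,descent-sequential,descent-localisation}. Your cocomma and sequential-colimit steps are the paper's cube argument. In the localisation step you make explicit something the paper leaves implicit: $f$ inverts $W$ if and only if $f^*E$ does, because the composition-compatible equivalence $B(x,y)\simeq\map(E_x,E_y)$ reflects invertibility.

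The groupoid base case, however, does not prove what is needed. For a groupoid $X$, $\fun(X,B)$ is not the space $\map(X,B^\simeq)$; that is only its space of objects (already $\fun(1,B)=B$). Reducing to the core statement about $B^\simeq\to\cat^\simeq$ therefore only shows that the comparison is surjective and an equivalence on spaces of \emph{isomorphisms}. It says nothing about non-invertible natural transformations $f\to g$.

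This matters for the induction. Your cocomma step invokes the full uniqueness statement at the apex. Already for $\II$, the cocomma of $1\leftarrow 1\to 1$, matching the two commas requires $B(b,b')\simeq\map(E_b,E_{b'})$ for all morphisms, not just invertible ones.

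The repair is short. On one side, $\fun(X,B)(f,g)\simeq\prod_{x:X}B(fx,gx)$. On the other, by \cref{descent-groupoid} the cocartesian functors $f^*E\to g^*E$ form $\prod_{x:X}\map(E_{fx},E_{gx})$. Directed univalence applied pointwise then gives full faithfulness. The monomorphism $B^\simeq\hookrightarrow\cat^\simeq$ that you derive is exactly what gives surjectivity, since one must lift a whole $X$-indexed family of $p$-classified categories, not one fibre at a time. This is what the paper's one-line groupoid case does: it uses directed univalence itself for full faithfulness, and ``in particular'' the monomorphism for surjectivity.
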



A direct proof of the result above is possible if one assumes that cocartesian fibrations are exponentiable~\cite{cisinski25}.
We instead deduce it from descent.

\begin{proof}
For a category $C$, denote the full subcategory of $\cocartfib(C)$ spanned by cocartesian fibrations all of whose fibres are classified by $p$ by $\cocartfib_p(C)$.
Say a category $C$ is \emph{good} if the (wild) functor $(-)_C^*p \co \fun(C, B) \to \cocartfib_p(C)$ is invertible.
We have to show that every category is good.
By \cref{join-induction}, it suffices to show that every groupoid is good, and that good categories are closed under cocommas, localisations, and sequential colimits.

Since $p$ is directed univalent, in particular the map $B^\simeq \to \cat^\simeq$ which computes fibres of $p$ is a monomorphism.
Together with \cref{descent-groupoid}, this means that every groupoid is good.

Suppose we are given a span of categories $Y \xleftarrow{f} X \xrightarrow{g} Z$ where $X$, $Y$, and $Z$ are good, and let $W$ denote the cocomma category $\cocomma{f}{g}$.
We then obtain the following cube-shaped diagram with lax top and bottom faces:
\[
\begin{tikzcd}
  \fun(W, B)
  \ar[rr]
  \ar[dd, "(-)^*_W p"']
  \ar[dr]
&&
  \fun(Z, B)
  \ar[dd, "(-)_Z^* p" near end]
  \ar[dr]
\\&
  \fun(Y, B)
  \ar[rr,crossing over]
  \ar[ur, Rightarrow]
&&
  \fun(X, B)
  \ar[dd, "(-)_X^* p"]
\\[1em]
  \cocartfib_p(W)
  \ar[rr]
  \ar[dr]
&&
  \cocartfib_p(Z)
  \ar[dr]
\\&
  \cocartfib_p(Y)
  \ar[from=uu, crossing over, "(-)_Y^* p"' near start]
  \ar[rr]
  \ar[ur, Rightarrow]
&&
  \cocartfib_p(X) \rlap{.}
\end{tikzcd}
\]
The top square is a comma square since $W$ is a cocomma.
The bottom square is a comma square by \cref{descent-cocomma}.
The three vertical maps $(-)^*_X p$, $(-)^*_Y p$, and $(-)^*_Z p$ are invertible by assumption.
The fourth vertical map $(-)^*_W p$ is thus invertible by uniqueness of comma categories.
Thus good categories are closed under cocomma.

The facts that good categories are closed under localisation and sequential colimits similarly follow from \cref{descent-localisation,descent-sequential}.
\end{proof}

\begin{remark}
The problem of defining a fully coherent (as opposed to wild) unstraightening functor is in some sense orthogonal to the problem of showing that the unstraightening functor is invertible.
Indeed, by the fundamental theorem of category theory, the statement that a functor is invertible involves only its action on objects and morphisms.
So far we have \emph{only} considered $\cocartfib(C)$ as a \emph{wild} category, and so have no choice but to consider $(-)_C^* p \co \fun(C, B) \to \cocartfib(C)$ as a wild functor.
However, given a universal cocartesian fibration $\cat_\bullet \to \cat$, it becomes possible to realise $\cocartfib(C)$ as a fully coherent category and we expect it to then also be possible to realise the straightening functor as a fully coherent functor (\cf \cite[Section 7.7]{cisinski25}).
\end{remark}

\printbibliography

@article{ayala20,
  title = {Fibrations of $\infty$-categories},
  author = {Ayala, David and Francis, John},
  year = 2020,
  month = feb,
  journal = {Higher Structures},
  volume = {4},
  number = {1},
  pages = {168--265},
  doi = {10.21136/HS.2020.05}
}

@misc{cisinski22,
  title = {The universal coCartesian fibration},
  author = {Cisinski, Denis-Charles and Nguyen, Hoang Kim},
  year = 2022,
  month = oct,
  number = {arXiv:2210.08945},
  eprint = {2210.08945},
  primaryclass = {math},
  publisher = {arXiv},
  doi = {10.48550/arXiv.2210.08945},
  archiveprefix = {arXiv}
}

@book{cisinski25,
  title = {Synthetic Category Theory},
  author = {Cisinski, Denis-Charles and Cnossen, Bastiaan and Nguyen, Kim and Walde, Tashi},
  year = 2026
}

@misc{gratzer26,
  title = {The $\infty$-category of $\infty$-categories in simplicial type theory},
  author = {Gratzer, Daniel and Weinberger, Jonathan and Buchholtz, Ulrik},
  year = 2026,
  month = feb,
  number = {arXiv:2602.02218},
  eprint = {2602.02218},
  primaryclass = {cs},
  publisher = {arXiv},
  doi = {10.48550/arXiv.2602.02218},
  archiveprefix = {arXiv}
}

@article{hebestreit25,
  title = {A short proof of the straightening theorem},
  author = {Hebestreit, Fabian and Heuts, Gijs and Ruit, Jaco},
  year = 2025,
  month = may,
  journal = {Transactions of the American Mathematical Society, Series B},
  volume = {12},
  number = {19},
  pages = {697--747},
  issn = {2330-0000},
  doi = {10.1090/btran/225},
  copyright = {https://creativecommons.org/licenses/by/3.0/}
}

@article{kelly80,
  title = {A unified treatment of transfinite constructions for free algebras, free monoids, colimits, associated sheaves, and so on},
  author = {Kelly, G.M.},
  year = 1980,
  month = aug,
  journal = {Bulletin of the Australian Mathematical Society},
  volume = {22},
  number = {1},
  pages = {1--83},
  issn = {0004-9727, 1755-1633},
  doi = {10.1017/S0004972700006353},
  copyright = {https://www.cambridge.org/core/terms}
}

@book{lurie09,
  title = {Higher Topos Theory},
  author = {Lurie, Jacob},
  year = 2009,
  series = {Annals of Mathematics Studies},
  publisher = {Princeton University Press}
}

@misc{martini22,
  title = {Cocartesian fibrations and straightening internal to an $\infty$-topos},
  author = {Martini, Louis},
  year = 2022,
  month = may,
  number = {arXiv:2204.00295},
  eprint = {2204.00295},
  primaryclass = {math},
  publisher = {arXiv},
  doi = {10.48550/arXiv.2204.00295},
  archiveprefix = {arXiv}
}

@misc{rijke17,
  title = {The join construction},
  author = {Rijke, Egbert},
  year = 2017,
  month = jan,
  number = {arXiv:1701.07538},
  eprint = {1701.07538},
  primaryclass = {math},
  publisher = {arXiv},
  doi = {10.48550/arXiv.1701.07538},
  archiveprefix = {arXiv}
}

@misc{uemura25,
  title = {Colimits in the $\infty$-category of $\infty$-topoi and \'etale morphisms},
  author = {Uemura, Taichi},
  year = 2025,
  month = jun,
  number = {arXiv:2506.10431},
  eprint = {2506.10431},
  primaryclass = {math},
  publisher = {arXiv},
  doi = {10.48550/arXiv.2506.10431},
  archiveprefix = {arXiv}
}
\end{document}